\def\R{\mathbb R}
\def\N{\mathbb N}
\def\al{\alpha}
\def\be{\beta}
\def\ga{\gamma}
\def\de{\delta}
\def\ep{\epsilon}
\def\la{\lambda}
\def\si{\sigma}
\def\var{\varphi}
\def\na{\nabla}
\def\Om{\Omega}  % domains
\def\De{\Delta}      % Laplacian operators
\def\cal{\mathcal}
\def\L{\mathcal L}                                       % operator
\def\wq{\infty}
\def\pa{\partial}
\def\loc{\text{\rm loc}}
\def\rad{\text{\rm rad}}
\def\Ker{\text{ Ker}}
\def\span{\text{span}}
\newcommand{\D}{{\rm d}}%  integral sign d
\newcommand{\wto}{\rightharpoonup}                %  weak convergence
\numberwithin{equation}{section}
\newtheorem{theorem}{Theorem}[section]
\newtheorem{corollary}[theorem]{Corollary}
\newtheorem{lemma}[theorem]{Lemma}
\newtheorem{proposition}[theorem]{Proposition}
\theoremstyle{definition}
\begin{document}
\title[on Kirchhoff equations]{Uniqueness and Nondegeneracy of positive solutions to Kirchhoff equations and its applications in singular perturbation problems}

     \author[G. Li, P. Luo,  S. Peng, C. Wang and C.-L. Xiang]{Gongbao Li, Peng Luo, Shuangjie Peng, Chunhua Wang and Chang-Lin Xiang}

\address[Gongbao Li]{School of Mathematics and Statistics and Hubei Key Laboratory of Mathematical Sciences,  Central China Normal University, Wuhan,  430079, P.R. China}
\email[]{ligb@mail.ccnu.edu.cn}
\address[Peng Luo]{School of Mathematics and Statistics and Hubei Key Laboratory of Mathematical Sciences,  Central China Normal University, Wuhan,  430079, P.R. China}
\email[]{pluo@mail.ccnu.edu.cn}
\address[Shuangjie Peng]{School of Mathematics and Statistics and Hubei Key Laboratory of Mathematical Sciences, Central China Normal University,  Wuhan,  430079, P.R. China}
\email[]{sjpeng@mail.ccnu.edu.cn}
\address[Chunhua Wang]{School of Mathematics and Statistics and Hubei Key Laboratory of Mathematical Sciences, Central China Normal University,  Wuhan,  430079, P.R. China}
\email[]{chunhuawang@mail.ccnu.edu.cn}
\address[Chang-Lin Xiang]{School of Information and Mathematics, Yangtze University, Jingzhou 434023, P.R. China,  and
University of Jyvaskyla, Department of Mathematics and Statistics, P.O. Box 35, FI-40014 University of Jyvaskyla, Finland}
\email[]{xiang\_math@126.com}
\thanks{Corresponding author: Chang-Lin Xiang}
\thanks{Li was supported by  NSFC (No. 11371159), and Program for Changjiang Scholars and Innovative Research Team in University \# IRT13066. Luo and Wang are partially supported by  self-determined research funds of CCNU from colleges' basic research and operation of MOE(CCNU16A05011, CCNU17QN0008). Peng and Wang are also financially supported by NSFC (No. 11571130, No.11671162). Xiang is partially  financially supported by  the Academy of Finland, project 259224.}

\begin{abstract}
In the present paper, we establish the uniqueness and nondegeneracy of positive energy solutions to the  Kirchhoff equation \begin{eqnarray*} -\left(a+b\int_{\mathbb{R}^{3}}|\nabla u|^{2}\right)\Delta  u+u=|u|^{p-1}u & & \text{in }\mathbb{R}^{3}, \end{eqnarray*} where $a,b>0$, $1<p<5$ are constants. Then, as applications, we derive the existence and local uniqueness of solutions to  the  perturbed Kirchhoff  problem
 \begin{eqnarray*} -\left(\epsilon^2a+\epsilon b\int_{\mathbb{R}^{3}}|\nabla u|^{2}\right)\Delta u+V(x)u=|u|^{p-1}u &  & \text{in }\mathbb{R}^{3} \end{eqnarray*} for $\epsilon>0$ sufficiently small, under  some mild assumptions on the potential function $V:\mathbb{R}^3\to \mathbb{R}$.  The existence result is obtained by applying the  Lyapunov-Schmidt reduction method. It seems to be  the first time to study singularly perturbed Kirchhoff problems by reduction method, as all the previous results were obtained by various variational methods. Another advantage of this  approach is that it gives a unified proof to the perturbation problem for all $p\in (1,5)$, which is quite different from using  variational methods in the literature. The local uniqueness result is totally new. It is obtained by using a  type of local Pohozaev identity, which is  developed quite recently by Deng, Lin and Yan in their work ``On the prescribed scalar curvature problem in $\mathbb{R}^N$, local uniqueness and periodicity."  (see J. Math. Pures Appl. (9) {\bf 104}(2015), 1013-1044).
\end{abstract}

\maketitle

{\small
\keywords {\noindent {\bf Keywords:} Kirchhoff equations; Uniqueness; Nondegeracy;  Lyapunov-Schmidt reduction;  Pohozaev identity}
\smallskip
\newline
\subjclass{\noindent {\bf 2010 Mathematics Subject Classification:} 35A01 $\cdot$ 35A02 $\cdot$ 35B25 $\cdot$ 35J20 $\cdot$ 35J60}
\tableofcontents}
\bigskip

\section{Introduction and main results}

\subsection{Introduction}

Let $a,b>0$ and $1<p<5$. In this paper, we are concerned with the
following equation
\begin{eqnarray}
-\left(a+b\int_{\R^{3}}|\na u|^{2}\right)\De u+u=u^{p}, & u>0 & \text{in }\R^{3}\label{eq: limiting Kirchhoff}
\end{eqnarray}
and the related perturbation problem
\begin{eqnarray}
-\left(\ep^{2}a+\ep b\int_{\R^{3}}|\na u|^{2}\right)\De u+V(x)u=u^{p}, & u>0 & \text{in }\R^{3},\label{eq: Kirchhoff}
\end{eqnarray}
where $\ep>0$ is a parameter, $V:\R^{3}\to\R$ is a bounded continuous
function.

Problems (\ref{eq: limiting Kirchhoff}), (\ref{eq: Kirchhoff}) and
their variants have been studied extensively in the literature. It
was the physician Kirchhoff \cite{Kirchhoff-1883} that proposed the
following time dependent wave equation
\[
\rho\frac{\pa^{2}u}{\pa t^{2}}-\left(\frac{P_{0}}{h}+\frac{E}{2L}\int_{0}^{L}\left|\frac{\pa u}{\pa x}\right|^{2}\right)\frac{\pa^{2}u}{\pa x^{2}}=0
\]
for the first time, in order to extend the classical D'Alembert's
wave equations for free vibration of elastic strings. Bernstein \cite{Bernstein-1940}
and Pohozaev \cite{Pohozaev-1975} are examples of early research
on the study of Kirchhoff equations. Much attention was received until
J.L. Lions \cite{Lions-1978} introducing an abstract functional framework
to this problem. More interesting results can be found in e.g. \cite{Arosio-Panizzi-1996,Cingolani-Lazzo-1997,DAncona-Spagnolo-1992}
and the references therein. From a mathematical point of view, the
interest of studying Kirchhoff equations comes from the nonlocality
of Kirchhoff type equations. For instance, the consideration of the
stationary analogue of Kirchhoff's wave equation leads to the Dirichlet
problem
\begin{equation}
\begin{cases}
-\left(a+b\int_{\Om}|\na u|^{2}\right)\De u=f(x,u) & \text{in }\Om,\\
u=0 & \text{on }\pa\Om,
\end{cases}\label{eq: Kirchhoff eq. on bdd domain}
\end{equation}
 where $\Om\subset\R^{3}$ is a bounded domain, and to equations of
type
\begin{eqnarray}
-\left(a+b\int_{\R^{3}}|\na u|^{2}\right)\De u=f(x,u) &  & \text{in }\R^{3},\label{eq: general Kirchhoff eq.}
\end{eqnarray}
respectively. In the above two problems, $f$ denotes some nonlinear
functions, a typical example of which is given as in Eq. (\ref{eq: limiting Kirchhoff}).
Note that the term $\left(\int|\na u|^{2}\D x\right)\De u$ depends
not only on the pointwise value of $\De u$, but also on the integral
of $|\na u|^{2}$ over the whole domain. In this sense, Eqs. (\ref{eq: limiting Kirchhoff}),
(\ref{eq: Kirchhoff}), (\ref{eq: Kirchhoff eq. on bdd domain}) and
(\ref{eq: general Kirchhoff eq.}) are no longer the usual pointwise
equalities. This new feature brings new mathematical difficulties
that make the study of Kirchhoff type equations particularly interesting.
We refer to e.g. \cite{Perera-Zhang-2006} and to e.g. \cite{Deng-Peng-Shuai-2015,Figueiredo et al-2014,Guo-2015,He-Li-2015,Li-Li-Shi-2012,Li-Ye-2014}
for mathematical researches on Kirchhoff type equations on bounded
domains and in the whole space, respectively.

Eqs. (\ref{eq: limiting Kirchhoff}) and (\ref{eq: Kirchhoff}) are
also closely related to Schr\"odinger equations. Indeed, notice that
when the constant $b$ vanishes, Eqs. (\ref{eq: limiting Kirchhoff})
and (\ref{eq: Kirchhoff}) reduce to the classical Schr\"odinger
equation
\begin{eqnarray}
-\De w+w=w^{p}, & w>0 & \text{in }\R^{3}\label{eq: classical schrodinger}
\end{eqnarray}
and its perturbation problem
\begin{eqnarray*}
-\ep^{2}\De u+V(x)u=u^{p}, & u>0 & \text{in }\R^{3},
\end{eqnarray*}
respectively. They are special cases of
\begin{eqnarray}
-\ep^{2}\De u+V(x)u=u^{q}, & u>0 & \text{in }\R^{n},\label{eq: Schrodinger equations}
\end{eqnarray}
where $1<q$ is subcritical and $n\ge1$. It is known that Eq. (\ref{eq: classical schrodinger})
admits a unique positive solution (up to translations) which is also
nondegenerate (see e.g. \cite{Berestycki-Lions-1983-1,Berestycki-Lions-1983-2,Chang et al-2007,Kwong-1989}).
Based on this uniqueness and nondegeneracy property, Flower and Weinstein
\cite{Flower-Weinstein-1986}, Oh \cite{Oh-1988,Oh-1990} and many
others proved the existence of solutions to Eq. (\ref{eq: Schrodinger equations})
for $\ep>0$ sufficiently small (the so called semiclassical solutions),
by using the Lyapunov-Schmidt reduction method. Their works motivated
us to study the uniqueness and nondegeneracy of positive solutions
to problem (\ref{eq: limiting Kirchhoff}) and its application in
problem (\ref{eq: Kirchhoff}).

Another motivation of this work is due to the fact that up to now
there have no results on local uniqueness of concentrating solutions
to singularly perturbed Kirchhoff equations, while quite many works
have been devoted to the local uniqueness of concentrating solutions
to singularly perturbed Schr\"odinger equations, see e.g. \cite{Cao-Heinz-2003,Cao-Li-Luo-2015,Glangetas-1993,Grossi}
and the references therein. Here, by local uniqueness, it means that
it has only one solution in the given class of solutions. As an example,
Cao, Li and the second-named author of the present paper recently
considered in \cite{Cao-Li-Luo-2015} the Schr\"odinger equation
(\ref{eq: Schrodinger equations}) under the assumptions that $V$
satisfies

(1) $V$ is a bounded $C^{1}$ function and $\inf_{\R^{n}}V>0$;

(2) There exist $m>1$ and $\de>0$ such that
\[
\begin{cases}
V(x)=V(a_{j})+\sum_{i=1}^{n}b_{j,i}|x_{i}-a_{j,i}|^{m}+O(|x-a_{j}|^{m+1}), & x\in B_{\de}(a_{j}),\\
\frac{\pa V}{\pa x_{i}}=mb_{j,i}|x_{i}-a_{j,i}|^{m-2}(x_{i}-a_{j,i})+O(|x-a_{j}|^{m}), & x\in B_{\de}(a_{j}),
\end{cases}
\]
where $x=(x_{1},\cdots,x_{n})\in\R^{n}$, $a_{j}=(a_{j,1},\cdots,a_{j,n})\in\R^{n}$,
$b_{j,i}\in\R$ with $b_{j,i}\neq0$ for each $i=1,\cdots,n$ and
$j=1,\cdots,k$. By introducing new ideas such as a type of local
Pohozaev identity from Deng, Lin and Yan \cite{Deng-Lin-Yan-2015},
they showed the local uniqueness of multi-bump solutions to problem
(\ref{eq: Schrodinger equations}) concentrating at $k$ different
critical points $\{a_{j}\}_{j=1}^{k}$ of the potential $V$. Here,
by concentrating at $\{a_{j}\}_{j=1}^{k}$, it means that if $u_{\ep}$
is a solution to Eq. (\ref{eq: Schrodinger equations}), then for
any $\de>0$, there exist $\ep_{0}>0$, $R>1$, such that $u_{\ep}(x)\le\de$
for all $|x-a_{j}|\ge\ep R$ and $\ep<\ep_{0}$. Throughout their
proof, the nondegeneracy result of Kwong \cite{Kwong-1989} on positive
solutions to Eq. (\ref{eq: classical schrodinger}) plays a fundamental
role. For more local uniqueness results in this respect, see the references
in \cite{Cao-Li-Luo-2015}. Local uniqueness results have important
applications, as was found for the first time by Deng, Lin and Yan
\cite{Deng-Lin-Yan-2015}. Indeed, Deng, Lin and Yan \cite{Deng-Lin-Yan-2015}
considered solutions of a prescribed scalar curvature problem with
infinitely many bubbles. By considering the normalized difference
of two such solutions, and establishing various Pohozaev-type identities,
they proved that solutions with infinitely many bubbles are unique,
which then implied the periodicity of such solutions under the additional
assumption that the prescribed scalar curvature function is periodic
with respect to one or several variables. Guo, Peng and Yan \cite{Guo-Peng-Yan-2015-arXiv}
further extend the results of Deng, Lin and Yan \cite{Deng-Lin-Yan-2015}
to poly-harmonic problems with critical nonlinearity. Due to the fact
that local uniqueness problem for singularly perturbed Kirchhoff equations
is unknown, in the present paper, we also aim to establish this type
of uniqueness results for Eq. (\ref{eq: Kirchhoff}) under suitable
assumptions.

\subsection{Uniqueness and nondegeneracy results}

 It is  known that Eq. (\ref{eq: limiting Kirchhoff}) is the Euler-Lagrange equation of the energy functional $I:H^{1}(\R^{3})\to\R$ defined as \[ I(u)=\frac{1}{2}\int_{\R^{3}}\left(a|\na u|^{2}+u^{2}\right)+\frac{b}{4}\left(\int_{\R^{3}}|\na u|^{2}        \right)^{2}                    -\frac{1}{p+1}\int_{\R^{3}}|u|^{p+1} \] for $u\in H^{1}(\R^{3})$. Thus critical point theories have been devoted to find solutions for Eq. (\ref{eq: limiting Kirchhoff}) and its variants, see e.g. \cite{He-Zou-2012,Li-Ye-2014,Ye-2015} and the references therein. In particular, the existence of positive solutions of Eq. (\ref{eq: limiting Kirchhoff}) was obtained by looking for  the so called ground states, which is defined as follows: Consider the set of solutions to Eq. (\ref{eq: limiting  Kirchhoff}) and denote \begin{equation} m=\inf\left\{ I(v):v\in H^{1}(\R^{3})\text{ is a nontrivial solution to Eq. }(\ref{eq: Kirchhoff})\right\} .\label{eq: least energy} \end{equation} A nontrivial solution $u$ to Eq. (\ref{eq: limiting  Kirchhoff}) is called a \emph{ground state} if $ I(u)=m$.

The following proposition is summarized from the literature for the readers' convenience.
\begin{proposition} \label{prop: Existence}Let $a,b>0$ and $1<p<5$. Let $m$ be the ground state energy defined as in (\ref{eq: least energy}). Then, there exists a positive ground state of (\ref{eq: limiting Kirchhoff}), and  $ m>0$ holds.

Moreover, for any positive solution $u$, there hold

(1) (smoothness) $u\in C^{\wq}(\R^{3})$;

(2) (symmetry) there exists a decreasing function $v:[0,\wq)\to(0,\wq)$ such that $u=v(|\cdot-x_{0}|)$ for a point $x_{0}\in\R^{3}$;

(3) (Asymptotics) For any multiindex $\al\in\N^{n}$, there exist constants $\de_{\al}>0$ and $C_{\al}>0$ such that \begin{eqnarray*} |D^{\al}u(x)|\le C_{\al}e^{-\de_{\al}|x|} &  & \text{for all }x\in\R^{3}. \end{eqnarray*} \end{proposition}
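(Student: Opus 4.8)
My plan is to read off all three assertions, together with the existence of a ground state, from the classical theory of the limiting Schr\"odinger equation \eqref{eq: classical schrodinger}, by means of an explicit rescaling. The starting remark is that for a \emph{fixed} positive $H^1(\R^3)$--solution $u$ of \eqref{eq: limiting Kirchhoff} the number $t:=a+b\int_{\R^3}|\na u|^2$ is simply a positive constant, so $u$ solves the autonomous \emph{local} equation $-t\De u+u=u^p$ in $\R^3$; rescaling, $w(y):=u(\sqrt t\,y)$ is a positive $H^1(\R^3)$--solution of \eqref{eq: classical schrodinger}. Conversely, given a positive solution $w$ of \eqref{eq: classical schrodinger}, the function $u(x):=w(x/\sqrt t)$ solves \eqref{eq: limiting Kirchhoff} exactly when $t$ obeys the compatibility relation $t=a+b\,t^{1/2}\int_{\R^3}|\na w|^2$ (which comes from $\int_{\R^3}|\na u|^2=t^{1/2}\int_{\R^3}|\na w|^2$), and for fixed $w$ this relation has a unique positive root $t=t(w)$ depending monotonically on $\int_{\R^3}|\na w|^2$. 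In particular positive solutions of \eqref{eq: limiting Kirchhoff} correspond, through this scaling, to positive solutions of \eqref{eq: classical schrodinger}.

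Granted the reduction, assertions (1)--(3) are the classical regularity, symmetry and decay statements for the local equation $-t\De u+u=u^p$, which has the same form as \eqref{eq: classical schrodinger}: $L^q$/Schauder bootstrap gives $u\in C^\infty(\R^3)$ (legitimate since $p<5$); since $u\in H^1(\R^3)\cap C^\infty$ vanishes at infinity, the moving-plane method of Gidas--Ni--Nirenberg gives a point $x_0$ and a strictly decreasing $v:[0,\wq)\to(0,\wq)$ with $u=v(|\cdot-x_0|)$; and a comparison argument with $Ce^{-\de|x|}$ outside a large ball (using $u^p\le\tfrac12 u$ there), followed by differentiating the equation and invoking interior estimates, yields $|D^\al u(x)|\le C_\al e^{-\de_\al|x|}$. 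Equivalently, these follow by transporting the known properties of the ground state $U$ of \eqref{eq: classical schrodinger} through the dilation $x\mapsto x/\sqrt t$ (which merely rescales the constants $\de_\al,C_\al$); see e.g.\ \cite{Berestycki-Lions-1983-1,Berestycki-Lions-1983-2,Kwong-1989}.

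For the existence of a positive ground state and for $m>0$ I would use two integral identities. Testing $-t\De u+u=u^p$ against $u$ and against $x\cdot\na u$ yields, for every nontrivial solution $u$, the Nehari identity $a\int_{\R^3}|\na u|^2+\int_{\R^3}u^2+b\big(\int_{\R^3}|\na u|^2\big)^2=\int_{\R^3}|u|^{p+1}$ and the Pohozaev identity $\tfrac12\big(a\int_{\R^3}|\na u|^2+b(\int_{\R^3}|\na u|^2)^2\big)+\tfrac32\int_{\R^3}u^2=\tfrac3{p+1}\int_{\R^3}|u|^{p+1}$; eliminating $\int_{\R^3}u^2$ and $\int_{\R^3}|u|^{p+1}$ from these and from the formula for $I$ gives the clean identity $I(u)=\tfrac a3\int_{\R^3}|\na u|^2+\tfrac b{12}\big(\int_{\R^3}|\na u|^2\big)^2$, valid for every nontrivial solution and every $p\in(1,5)$. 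The same two identities, the Sobolev embedding $H^1(\R^3)\hookrightarrow L^{p+1}(\R^3)$ (recall $p+1<6$) and $p>1$ force $\int_{\R^3}|\na u|^2\ge c_1>0$ uniformly over nontrivial solutions, whence $I(u)\ge\tfrac a3 c_1>0$ and $m>0$. Since $\xi\mapsto\tfrac a3\xi+\tfrac b{12}\xi^2$ is strictly increasing, the value of $I$ at a solution is a strictly increasing function of its Dirichlet energy; combining this with the monotonicity of the scaling correspondence of the first paragraph and with the fact that $U$ has least Dirichlet energy among all nontrivial solutions of \eqref{eq: classical schrodinger} (a consequence of its least-energy property and of the Schr\"odinger analogue of the identity above), the infimum of $\int_{\R^3}|\na u|^2$ over nontrivial solutions of \eqref{eq: limiting Kirchhoff} is attained precisely by $u_0(x):=U(x/\sqrt{t_0})$, where $t_0$ is the unique positive root of $t=a+b\,t^{1/2}\int_{\R^3}|\na U|^2$. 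Hence $u_0$ is a positive ground state and $m=I(u_0)$. (Alternatively, existence can be produced by a Nehari--Pohozaev constrained minimisation of $I$ as in \cite{He-Zou-2012,Li-Ye-2014,Ye-2015}; the reduction above has the advantage of being uniform in $p\in(1,5)$.)

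In this approach there is no serious obstacle beyond correctly quoting the classical theory of \eqref{eq: classical schrodinger}---existence, least-energy property, symmetry and exponential decay of the radial ground state $U$---and checking the (routine) Pohozaev identity for the \emph{nonlocal} equation, the one technical point that deserves care: one justifies it either by first upgrading $H^1$--solutions to smooth, exponentially decaying solutions (the content of (1) and (3), which does not rely on the present step) and integrating by parts, or simply by working throughout with the frozen local equation $-t\De u+u=u^p$. The genuine difficulty is thus pushed into the quoted facts about \eqref{eq: classical schrodinger}; if instead one insisted on a self-contained variational proof of existence, it would resurface as the need to control minimising and Palais--Smale sequences when $1<p\le 3$ (where $I$ restricted to the Nehari manifold is no longer coercive and the Pohozaev identity must be brought in) and to recover the compactness destroyed by translation invariance via Lions' concentration--compactness lemma, the nonlocal term $\tfrac b4(\int_{\R^3}|\na u|^2)^2$ being weakly lower semicontinuous and hence harmless in the limit.
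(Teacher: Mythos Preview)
Your approach to (1)--(3) coincides with the paper's: the paper does not give a separate proof of Proposition~\ref{prop: Existence} but simply observes that, once the nonlocal coefficient is frozen, any positive solution solves a rescaled copy of \eqref{eq: classical schrodinger}, so smoothness, radial symmetry and exponential decay are inherited from the classical theory. This is exactly your first two paragraphs, and is also what the paper makes explicit later in the uniqueness proof in Section~\ref{sec: Proof-of-Theorem 1.1}.

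For the existence of a ground state and $m>0$, the paper takes a different route: it simply quotes the variational results of Ye, Li--Ye and He--Zou (obtained via Nehari--Poho\v{z}aev manifolds and mountain-pass arguments), and separately remarks that the explicit description of the solution set in Section~\ref{sec: Proof-of-Theorem 1.1} gives existence as a by-product. Your argument is more self-contained and, in a sense, sharper: you derive the closed formula $I(u)=\tfrac{a}{3}\int|\na u|^2+\tfrac{b}{12}\big(\int|\na u|^2\big)^2$ from the Nehari and Poho\v{z}aev identities, read off $m>0$ from a uniform lower bound on the Dirichlet energy, and identify the ground state as the rescaled Schr\"odinger ground state via monotonicity of the map $D\mapsto \tfrac12 D\big(bD+\sqrt{b^2D^2+4a}\big)$. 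This avoids the cited machinery entirely and is uniform in $p\in(1,5)$, which is precisely the advantage the paper claims for its own (different) approach. The one point you should state cleanly is that your scaling bijection extends to \emph{all} nontrivial $H^1$ solutions (not just positive ones), so that minimising $\int|\na u|^2$ over Kirchhoff solutions really reduces to the corresponding minimisation for \eqref{eq: classical schrodinger}; this is straightforward but needed for the ground-state conclusion.
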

The existence of ground states of equation (\ref{eq: limiting Kirchhoff}) is implied by Proposition \ref{prop: Existence} of Ye \cite{Ye-2015} \footnote{This reference was brought to us by Ye.}, where more general existence results on Kirchhoff type equations in $\R^{3}$ are obtained. In the special cases when $3<p<5$ and $2<p<3$, the existence has also been proved by He and Zou \cite{He-Zou-2012} and Li and Ye \cite{Li-Ye-2014}, respectively. In particular, in the papers  Ye \cite{Ye-2015} and  Li and Ye \cite{Li-Ye-2014}, to apply the Mountain Pass Lemma to find a ground state solution,  quite complicated manifolds were constructed in order to find a bounded Palais-Smale sequence. The fact that $m>0$ follows from Li and Ye \cite[Lemma 2.8]{Li-Ye-2014}, see also Ye \cite{Ye-2015}. Other properties follow easily from the theory of classical Schr\"odinger equations. For applications of Proposition \ref{prop: Existence}, see e.g. He and Zou \cite{He-Zou-2012}, Li and Ye \cite{Li-Ye-2014} and Ye \cite{Ye-2015} and the references therein.

Proposition \ref{prop: Existence} provides a good understanding on
ground states of Eq. (\ref{eq: limiting Kirchhoff}). However, we
are still left an open problem of uniqueness and nondegeneracy of
the ground state. In the literature, there exist several interesting
results in this respect. For instance, there hold uniqueness and nondegeneracy
of positive solutions to the quasilinear Schr\"odinger equation
\begin{eqnarray}
-\Delta u-u\Delta|u|^{2}+u-|u|^{q-1}u=0 &  & \text{in }\R^{n},\label{eq: quasilinear Schrodinger equation}
\end{eqnarray}
see e.g. \cite{Shinji-Masataka-Tatsuya-2016,Selvitella-2015,Xiang-2016},
and for ground states of the fractional Schr\"odinger equations ($0<s<1\le n$)
\begin{eqnarray*}
\left(-\De\right)^{s}w+w=w^{q}, & w>0 & \text{in }\R^{n},
\end{eqnarray*}
see e.g. \cite{Fall-Valdinoci-2014,Frank-Lenzmann-2013,Frank-Lenzman-Silvestre-2016}.
In the above examples, $q$ is an index standing for the nonlinearity
of subcritical growth. For a systematical research on applications
of nondegeneracy of ground states to perturbation problems, we refer
to Ambrosetti and Malchiodi \cite{Ambrosetti-Malchiodi-Book} and
the references therein. Uniqueness and nondegeneracy results also
play an important role in many other problems. It is known that the
uniqueness and nondegeneracy of ground states are of fundamental importance
when one deals with orbital stability or instability of ground states.
It mainly removes the possibility that directions of instability come
from the kernel of the corresponding linearized operator. The uniqueness
and nondegeneracy of ground states also play an important role in
blow-up analysis for the corresponding standing wave solutions in
the corresponding time-dependent equations, see e.g. Frank et al.
\cite{Frank-Lenzmann-2013,Frank-Lenzman-Silvestre-2016} and the references
therein. Thus, as our first result in this paper, we establish

\begin{theorem} \label{prop: LPX-2016} There exists a unique positive
radial solution $U\in H^{1}(\R^{3})$ satisfying
\begin{eqnarray}
-\left(a+b\int_{\R^{3}}|\na U|^{2}\right)\De U+U=U^{p}, & U>0 & \text{in }\R^{3}.\label{eq: limiting equ.}
\end{eqnarray}
Moreover, $U$ is nondegenerate in $H^{1}(\R^{3})$ in the sense that
there holds\textbf{ }
\[
{\rm Ker}\L={\rm span}\left\{ \pa_{x_{1}}U,\pa_{x_{2}}U,\pa_{x_{3}}U\right\} ,
\]
where $\L:L^{2}(\R^{3})\to L^{2}(\R^{3})$ is the linear operator
defined as
\begin{equation}
\L\var=-\left(a+b\int_{\R^{3}}|\na U|^{2}\right)\De\var-2b\left(\int_{\R^{3}}\na U\cdot\na\var\right)\De U+\var-pU^{p-1}\var\label{eq: L plus}
\end{equation}
for all $\var\in L^{2}(\R^{3})$. \end{theorem}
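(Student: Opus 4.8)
The plan is to reduce both assertions to the corresponding, already known, facts for the classical Schr\"odinger equation \eqref{eq: classical schrodinger}, whose positive radial solution $w$ is unique and nondegenerate by Kwong \cite{Kwong-1989}. The observation to exploit is that along \emph{any} positive solution $U$ of \eqref{eq: limiting equ.} the nonlocal coefficient $A:=a+b\int_{\R^{3}}|\na U|^{2}$ is merely a positive \emph{constant}, so that $U$ solves the local equation $-A\De U+U=U^{p}$ in $\R^{3}$. For uniqueness I would argue as follows: by Proposition~\ref{prop: Existence}(2) every positive solution is radial about a point, so one may assume $U$ radial; the rescaling $U(x)=w(x/\sqrt{A})$ then turns $-A\De U+U=U^{p}$ into $-\De w+w=w^{p}$, forcing $U(x)=w(x/\sqrt{A})$. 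A change of variables gives $\int_{\R^{3}}|\na U|^{2}=\sqrt{A}\int_{\R^{3}}|\na w|^{2}$, whence $A$ must satisfy $t^{2}-b\bigl(\int_{\R^{3}}|\na w|^{2}\bigr)t-a=0$ with $t=\sqrt{A}>0$, an equation with a single positive root; thus $A$, and hence $U$, is uniquely determined (existence being contained in Proposition~\ref{prop: Existence}).

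For nondegeneracy, one inclusion is easy: a translation $U(\cdot-\tau e_{j})$ leaves $A$ invariant and so solves the same equation for all $\tau\in\R$, and differentiating at $\tau=0$ while using $\int_{\R^{3}}\na U\cdot\na\pa_{x_{j}}U=\tfrac12\int_{\R^{3}}\pa_{x_{j}}|\na U|^{2}=0$ to kill the nonlocal term of \eqref{eq: L plus} shows $\pa_{x_{j}}U\in\Ker\L$. For the reverse inclusion I would take $\var\in\Ker\L$ (which by elliptic regularity and the exponential decay of $U$ lies in $H^{2}(\R^{3})$ and decays exponentially), set $c:=2b\int_{\R^{3}}\na U\cdot\na\var\in\R$, so that $\L\var=0$ reads $-A\De\var+\var-pU^{p-1}\var=c\,\De U$, and rescale $\var(x)=\psi(x/\sqrt{A})$; using $\De w=w-w^{p}$ this becomes
\[
L_{0}\psi:=-\De\psi+\psi-pw^{p-1}\psi=\tfrac{c}{A}\,(w-w^{p}).
\]

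Now $L_{0}$ is self-adjoint on $L^{2}(\R^{3})$ with $\Ker L_{0}=\span\{\pa_{y_{1}}w,\pa_{y_{2}}w,\pa_{y_{3}}w\}$ (Kwong), and the right-hand side above is radial, hence orthogonal to $\Ker L_{0}$, so the equation is solvable; furthermore the dilation identity $L_{0}(y\cdot\na w)=-2(w-w^{p})$ — which comes from applying $y\cdot\na$ to $-\De w+w-w^{p}=0$ and using $\De(y\cdot\na w)=(y\cdot\na)\De w+2\De w$ — provides an explicit particular solution, so that $\psi=-\tfrac{c}{2A}\,y\cdot\na w+\sum_{j=1}^{3}d_{j}\pa_{y_{j}}w$ for some constants $d_{j}$. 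The crux will then be to show $c=0$: substituting this $\psi$ into the rescaled identity $c=2b\sqrt{A}\int_{\R^{3}}\na w\cdot\na\psi$ (the net factor $\sqrt{A}$ coming from the Jacobian and the rescaling of the gradients) and using $\int_{\R^{3}}\na w\cdot\na\pa_{y_{j}}w=0$ together with $\int_{\R^{3}}\na w\cdot\na(y\cdot\na w)=-\tfrac12\int_{\R^{3}}|\na w|^{2}$ (an integration by parts) gives $c=\frac{b\int_{\R^{3}}|\na w|^{2}}{2\sqrt{A}}\,c$; were the coefficient equal to $1$ we would get $b\int_{\R^{3}}|\na w|^{2}=2\sqrt{A}$, which combined with $A=a+b\sqrt{A}\int_{\R^{3}}|\na w|^{2}$ forces $A=-a<0$, a contradiction. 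Hence $c=0$, so $L_{0}\psi=0$, $\psi\in\span\{\pa_{y_{j}}w\}$, and therefore $\var\in\span\{\pa_{x_{1}}U,\pa_{x_{2}}U,\pa_{x_{3}}U\}$.

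I expect the main obstacle to be precisely the handling of the nonlocal term $-2b\bigl(\int_{\R^{3}}\na U\cdot\na\var\bigr)\De U$ in \eqref{eq: L plus}, which has no analogue for Schr\"odinger: it forces one to produce, via the dilation identity, a particular solution proportional to $\De U$, and then to exploit the quantitative fact, forced by $a>0$, that $b\int_{\R^{3}}|\na w|^{2}\neq2\sqrt{A}$ — which is exactly what makes the self-consistency relation collapse to $c=0$. Everything else (the rescaling reduction, the change-of-variables bookkeeping, and the Fredholm solvability of $L_{0}\psi=f$) should be routine once this mechanism is in place.
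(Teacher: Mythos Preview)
Your proof is correct, and the uniqueness argument is essentially identical to the paper's. For nondegeneracy, however, you take a cleaner route than the paper does.

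The paper proceeds in two stages. First it proves a radial version (Proposition~\ref{prop: nonexistence of radial solutions}): for radial $\varphi$ with $\L\varphi=0$, one inverts the auxiliary operator $\mathcal{A}_u=-c\Delta+1-pu^{p-1}$ on $L^2_{\rm rad}$, computes $\mathcal{A}_u^{-1}(\Delta u)=-\tfrac{1}{2c}\,x\cdot\nabla u$ via the same dilation identity you use, and derives the self-consistency relation $\int\nabla u\cdot\nabla\varphi=\tfrac{c-a}{2c}\int\nabla u\cdot\nabla\varphi$, forcing the nonlocal coefficient to vanish. Then, for general $\varphi$, the paper decomposes into spherical harmonics, observes that only the radial mode contributes to $\int\nabla u\cdot\nabla\varphi$, and handles the modes $k=0,1,\ge 2$ separately (the $k\ge 2$ case via positivity of the shifted radial operator).

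You bypass the spherical-harmonic machinery entirely: by invoking the Fredholm alternative for $L_0$ on all of $L^2(\R^3)$, you write the general $H^2$-solution of $L_0\psi=\tfrac{c}{A}(w-w^p)$ directly as $-\tfrac{c}{2A}\,y\cdot\nabla w+\sum_j d_j\,\partial_{y_j}w$, and then the self-consistency computation (your coefficient $\tfrac{b\int|\nabla w|^2}{2\sqrt{A}}=\tfrac{A-a}{2A}<1$ is exactly the paper's $\tfrac{c-a}{2c}$) kills $c$ in one stroke. The core mechanism---dilation identity plus the strict inequality forced by $a>0$---is the same, but your packaging is more economical: the paper's separate treatment of the $k=1$ and $k\ge 2$ harmonics is absorbed into the single statement $\Ker L_0=\span\{\partial_{y_j}w\}$, which you already have from Kwong.
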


We remark that our proof of Theorem \ref{prop: LPX-2016} also shows
the existence of positive solutions to Eq. (\ref{eq: limiting Kirchhoff}).
Moreover, we obtain an almost explicit expression for the solutions,
from which Proposition \ref{prop: Existence} follows easily. Also,
our proof is unified for all $p$, $1<p<5$, which is quite different
from the variational methods mentioned above.

In the end of this subsection, let us sketch the proof of Theorem
\ref{prop: LPX-2016}. Recall that to deduce the uniqueness and nondegeneracy
for positive solutions to the local Schr\"odinger equations (\ref{eq: Schrodinger equations})
and (\ref{eq: quasilinear Schrodinger equation}), corresponding ordinary
differential equations are used. That is, to consider the ordinary
differential equations
\begin{eqnarray*}
-\left(u_{rr}+\frac{n-1}{r}u_{r}\right)+u(r)-u^{p}(r)=0, &  & r>0,
\end{eqnarray*}
and
\begin{eqnarray*}
-\left(u_{rr}+\frac{n-1}{r}u_{r}\right)-u(r)\left((u^{2})_{rr}+\frac{n-1}{r}(u^{2})_{r}\right)+u(r)-u^{p}(r)=0, &  & r>0,
\end{eqnarray*}
respectively, where $u_{r}$ is the derivative of $u$ with respect
to $r$, see e.g. Kwong \cite{Kwong-1989} and Adachi et al. \cite{Shinji-Masataka-Tatsuya-2016}.
Therefore, to prove the uniqueness part of Theorem \ref{prop: LPX-2016},
it is quite natural to consider the corresponding ordinary differential
equation to Eq. (\ref{eq: limiting Kirchhoff})
\[
-\left(a+b\int_{0}^{\wq}4\pi r^{2}u_{r}^{2}(r)\right)\left(u_{rr}+\frac{2}{r}u_{r}\right)+u(r)-u^{p}(r)=0
\]
for $0<r<\wq$. However, it turns out that this idea is not so applicable
due to the nonlocality of the term $\int_0^{\wq}4\pi r^2 u_r^{ 2}(r)$.
To overcome this difficulty, our key observation is that the quantity
$\int_0^{\wq}4\pi r^2u_r^{ 2}(r)$ is, in fact, independent of the
choice of the positive solution $u$. Hence we conclude that the coefficient
$ a+b\int_0^{\wq}4\pi r^2 u_r^{ 2}(r)$ is just a positive constant
that is independent of the given solution $u$. At this moment, we
are allowed to apply the uniqueness result of Kwong \cite{Kwong-1989}
on positive solutions to Eq. (\ref{eq: classical schrodinger}) to
prove the uniqueness part of Theorem \ref{prop: LPX-2016}.

To prove the nondegeneracy part of Theorem \ref{prop: LPX-2016},
we apply the spherical harmonics to turn the problem into a system
of ordinary differential equations. It turns out that the key is to
show that the problem $\L\var=0$ has only a trivial radial solution.
In other words, the key step is to show that the positive solution
$u$ of Eq. (\ref{eq: limiting Kirchhoff}) is nondegenerate in the
subspace of radial functions of $H^{1}(\R^{3})$. To this end, again
the above observation plays an essential role. To be precise, write
$c=a+b\int_{0}^{\wq}4\pi r^{2}u_{r}^{2}(r)\D r$ and keep in mind
that $c$ is a constant that is independent of $u$. Introduce an
auxiliary operator ${\cal A}_{u}$ associated to $u$ by defining
\[
{\cal A}_{u}\var=-c\De\var+\var-pu^{p-1}\var
\]
for $\var\in L^{2}(\R^{3})$. Then solving the problem $\L\var=0$,
where $\var$ is radial, is equivalent to solving
\[
{\cal A}_{u}\var=2b\left(\int_{\R^{3}}\na u\cdot\na\var\right)\De u.
\]
 Since ${\cal A}_{u}$ is the linearized operator of positive solutions
to Eq. (\ref{eq: classical schrodinger}) up to a constant, the theory
of the nondegeneracy of positive solutions to Eq. (\ref{eq: classical schrodinger})
are applicable, see Proposition \ref{thm: Kwong and Chang} and Proposition
\ref{prop: property of Au} below. Finishing this step, the rest of
the proof is standard. We refer the readers to the proof of Theorem
\ref{prop: LPX-2016} for details.

\subsection{Existence of semiclassical bounded states}

As applications of Theorem \ref{prop: LPX-2016}, we look for solutions
of (\ref{eq: Kirchhoff}) in the Sobolev space $H^{1}(\R^{3})$ for
sufficiently small $\ep$. Following Oh \cite{Oh-1988}, we call the
solutions as semiclassical solutions. We also call such derived solutions
as concentrating solutions since they will concentrate at certain
point of the potential function $V$.

First let us review some known results. He and Zou \cite{He-Zou-2012}
seems to be the first to study singular perturbed Kirchhoff equations.
In their work \cite{He-Zou-2012}, they considered the problem
\begin{eqnarray*}
-\left(\ep^{2}a+\ep b\int_{\R^{3}}|\na u|^{2}\right)\De u+V(x)u=f(u), & u>0 & \text{in }\R^{3},
\end{eqnarray*}
where $V$ is assumed to satisfy the global condition of Rabinowitz
\cite{Rabinowitz-1992-ZAMP}
\begin{equation}
\liminf_{|x|\to\wq}V(x)>\inf_{x\in\R^{3}}V(x)>0,\label{eq: Rabinowitz condition}
\end{equation}
and $f:\R\to\R$ is a nonlinear function with subcritical growth of
type $u^{q}$ for some $3<q<5$. By using variational method, they
proved the existence of multiple positive solutions for $\ep$ sufficiently
small. Among other results, Wang, Tian, Xu and Zhang \cite{Wang-Tian-Xu-Zhang-2012}
established similar results for Kirchhoff equations with critical
growth
\begin{eqnarray*}
-\left(\ep^{2}a+\ep b\int_{\R^{3}}|\na u|^{2}\right)\De u+V(x)u=f(u)+u^{5}, & u>0 & \text{in }\R^{3},
\end{eqnarray*}
by using variational methods as well, where $V$ and $f$ satisfy
similar conditions as that of \cite{He-Zou-2012}. Based on ``penalization
method'', He, Li and Peng \cite{He-Li-Peng-2014} improved an existence
result of Wang, Tian, Xu and Zhang \cite{Wang-Tian-Xu-Zhang-2012}
by allowing that $V$ only satisfies local conditions: there exists
a bounded open set $\Om\subset\R^{3}$ such that
\begin{equation}
\inf_{\Om}V<\inf_{\pa\Om}V.\label{eq: local Rabinowitz condition}
\end{equation}
Later, by introducing new manifold and applying new approximation
method of \cite{Figueiredo et al-2014}, He and Li \cite{He-Li-2015}
proved the existence of solutions for $\ep$ sufficiently small to
the following problem
\begin{eqnarray*}
-\left(\ep^{2}a+\ep b\int_{\R^{3}}|\na u|^{2}\right)\De u+V(x)u=u^{q}+u^{5}, & u>0 & \text{in }\R^{3},
\end{eqnarray*}
with $V$ satisfying the local condition (\ref{eq: local Rabinowitz condition})
and $1<q<3$. Note that one of their innovations of He and Li \cite{He-Li-2015}
is that they assume $1<q<3$, which is not considered before their
paper. This is due to some drawback of variational methods applied
in previous researches. He \cite{He-2016-JDE} further improved the
results of He and Li \cite{He-Li-2015} by considering Kirchhoff problems
with more general nonlinearity.

From the above, we summarize that all existing results on singularly
perturbed Kirchhoff problems mentioned above are obtained by variational
methods. Moreover, to deal with nonlinearity of type $u^{q}$ for
$q$ in different subintervals of $(1,5]$, different variational
methods have to be applied. By Theorem \ref{prop: LPX-2016}, it is
now possible that we apply Lyapunov-Schmidt reduction to study the
perturbed Kirchhoff equation (\ref{eq: Kirchhoff}). Moreover, it
is expected that this approach can deal with problem (\ref{eq: Kirchhoff})
for all $p$, $1<p<5$, in a unified way, as was shown in Theorem
\ref{prop: LPX-2016}. Indeed, we will derive semiclassical solutions
for problem (\ref{eq: Kirchhoff}) by using Lyapunov-Schmidt reduction
for all $p$, $1<p<5$, in a unified way.

To state our following results, let introduce some notations that
will be used throughout the paper. For $\ep>0$ and $y=(y_{1},y_{2},y_{3})\in\R^{3}$,
write
\begin{eqnarray*}
u_{\ep,y}(x)=u((x-y)/\epsilon), &  & x\in\R^{3}.
\end{eqnarray*}
Assume that $V:\R^{3}\to\R$ satisfies the following conditions:

(V1) $V$ is a bounded continuous function with $\inf_{x\in\R^{3}}V>0$;

(V2) There exist $x_{0}\in\R^{3}$ and $r_{0}>0$ such that
\begin{eqnarray*}
V(x_{0})<V(x) &  & \text{for }0<|x-x_{0}|<r_{0},
\end{eqnarray*}
and $V\in C^{\al}(\bar{B}_{r_{0}}(x_{0}))$ for some $0<\al<1$. That
is, $V$ is of $\al$th order H\"older continuity around $x_{0}$.
Without loss of generality, we assume
\begin{eqnarray*}
x_{0}=0,\quad r_{0}=10 & \text{and} & V(x_{0})=1
\end{eqnarray*}
for simplicity.

The assumption (V1) allows us to introduce the inner products
\[
\langle u,v\rangle_{\ep}=\int_{\R^{3}}\left(\ep^{2}a\na u\cdot\na v+V(x)uv\right)
\]
for $u,v\in H^{1}(\R^{3})$. We also write
\[
H_{\ep}=\{u\in H^{1}(\R^{3}):\|u\|_{\ep}\equiv\langle u,u\rangle_{\ep}^{1/2}<\wq\}.
\]
Denote by $U\in H^{1}(\R^{3})$ the unique positive radial solution
to Eq. (\ref{eq: limiting equ.}). $U$ plays the role of a building
block in the procedure of finding solutions. Now we state the existence
result as follows.

\begin{theorem}\label{thm: main result-existence}Let $a,b>0$ and
$1<p<5$. Suppose that $V$ satisfies (V1) and (V2). Then there exists
$\ep_{0}>0$ such that for all $\ep\in(0,\ep_{0})$, problem (\ref{eq: Kirchhoff})
has a solution $u_{\ep}$ of the form
\[
u_{\ep}=U\left(\frac{x-y_{\ep}}{\ep}\right)+\var_{\ep}
\]
with $\var_{\ep}\in H_{\ep}$, satisfying
\[
y_{\ep}\to x_{0},
\]
\[
\|\var_{\ep}\|_{\ep}=o(\ep^{3/2})
\]
as $\ep\to0$.\end{theorem}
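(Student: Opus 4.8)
The plan is to carry out a standard Lyapunov--Schmidt reduction, using the nondegenerate solution $U$ from Theorem \ref{prop: LPX-2016} as the building block. Rescaling $x\mapsto \ep x$ and writing $u(x)=v(\ep x)$ (or equivalently working in the space $H_\ep$ with the inner product $\langle\cdot,\cdot\rangle_\ep$), problem (\ref{eq: Kirchhoff}) becomes, after the scaling, a perturbation of Eq. (\ref{eq: limiting equ.}), with the ``error'' coming from two sources: the deviation of $V(\ep x)$ from the constant $V(x_0)=1$ near the concentration point, and the fact that the Kirchhoff nonlocal coefficient $\ep^2 a+\ep b\int|\na u|^2$ does not exactly match $a+b\int|\na U|^2$. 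For $y$ in a small neighborhood of $x_0=0$, set $Z_{\ep,y}=U((x-y)/\ep)$ and let $E_{\ep,y}=\span\{\pa_{x_1}Z_{\ep,y},\pa_{x_2}Z_{\ep,y},\pa_{x_3}Z_{\ep,y}\}$; I would seek a solution of the form $u_{\ep}=Z_{\ep,y_\ep}+\var_\ep$ with $\var_\ep\perp_\ep E_{\ep,y_\ep}$.

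First I would estimate the error term $\ell_{\ep,y}:=-(\ep^2 a+\ep b\int_{\R^3}|\na Z_{\ep,y}|^2)\De Z_{\ep,y}+V(x)Z_{\ep,y}-Z_{\ep,y}^p$. Using the scaling $\int_{\R^3}|\na Z_{\ep,y}|^2=\ep\int_{\R^3}|\na U|^2$, the Kirchhoff coefficient equals $\ep^2(a+b\int_{\R^3}|\na U|^2)$, so this part matches the equation for $U$ exactly after dividing by $\ep^2$; hence $\ell_{\ep,y}=(V(x)-1)Z_{\ep,y}$, and using (V2) together with the $\al$-H\"older continuity and the exponential decay of $U$ from Proposition \ref{prop: Existence}(3), one gets $\|\ell_{\ep,y}\|_{\ep,*}=O(\ep^{3/2+\al})$ (or $o(\ep^{3/2})$ with a little care about the H\"older modulus and the normalization of the dual norm), where $\|\cdot\|_{\ep,*}$ is the natural dual norm associated with $H_\ep$. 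Next I would invert the linearized operator $L_{\ep,y}$ (the linearization of the rescaled equation at $Z_{\ep,y}$, which is $\ep^2$ times a conjugate of the operator $\L$ from (\ref{eq: L plus})) restricted to $E_{\ep,y}^{\perp}$: by Theorem \ref{prop: LPX-2016}, $\Ker\L=\span\{\pa_{x_i}U\}$, so $L_{\ep,y}$ is uniformly invertible on the orthogonal complement, with operator norm of the inverse bounded independently of $\ep$ and $y$; this is the key nondegeneracy input. A contraction mapping argument in the ball $\{\|\var\|_\ep\le C\ep^{3/2+\al/2}\}$ then produces, for each small $y$, a unique $\var_{\ep,y}\perp_\ep E_{\ep,y}$ solving the auxiliary (projected) equation, with $\|\var_{\ep,y}\|_\ep=O(\|\ell_{\ep,y}\|_{\ep,*})=o(\ep^{3/2})$; here I must be careful that the nonlocal quadratic-type terms coming from $\int|\na u|^2$ are handled — they are genuinely quadratic and cubic in $\var$ and contribute only higher-order terms, but this requires a short estimate using the Sobolev embedding and the boundedness of $\int|\na Z_{\ep,y}|^2/\ep$.

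It then remains to solve the finite-dimensional (bifurcation) equation: find $y_\ep\to x_0$ such that the projection of the residual onto $E_{\ep,y}$ vanishes. The standard route is to show this is equivalent to $y$ being a critical point of the reduced energy $y\mapsto \tilde I_\ep(Z_{\ep,y}+\var_{\ep,y})$, and to expand this functional. The leading-order $y$-dependent term comes from $\frac12\int_{\R^3}(V(x)-V(x_0))Z_{\ep,y}^2 = \frac{\ep^3}{2}\int_{\R^3}(V(\ep z+y)-1)U(z)^2\,dz$, whose minimum over the closed ball $\bar B_{r_0}(x_0)$ is attained at an interior point because (V2) forces $V(x_0)<V(x)$ for $0<|x-x_0|<r_0$, so the reduced functional cannot be minimized on the boundary; a degree/compactness argument then yields an interior critical point $y_\ep$, and one checks $y_\ep\to x_0$. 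The main obstacle, and the place I expect to spend the most effort, is precisely this last step: because $V$ is only assumed H\"older continuous (not $C^1$, and with no prescribed expansion as in the assumptions of \cite{Cao-Li-Luo-2015}), one cannot differentiate $V$ to locate the critical point, so the bifurcation equation must be handled by a topological (min/degree) argument on the energy rather than by solving $\na_y(\cdots)=0$ directly; matching the required error order $o(\ep^{3/2})$ for $\var_\ep$ against the $\ep^3$-order of the reduced energy, while the error $\ell_{\ep,y}$ is only $O(\ep^{3/2+\al})$, is the delicate bookkeeping that makes the scheme close.
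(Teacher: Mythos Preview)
Your overall strategy matches the paper's: Lyapunov--Schmidt reduction using the nondegeneracy of $U$ (Theorem \ref{prop: LPX-2016}) to invert the linearized operator on the complement of $\span\{\pa_{x_i}U_{\ep,y}\}$, followed by minimization of the reduced energy over a small ball around $x_0$ and a comparison argument to show the minimizer is interior. Your observation that the Kirchhoff coefficient matches exactly after scaling, so that $\ell_{\ep,y}=(V(x)-1)U_{\ep,y}$, is correct and is exactly what the paper uses in Lemma \ref{lem: estimate for the first order}.

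There is, however, a real gap in your error estimate that would break the scheme as written. You claim $\|\ell_{\ep,y}\|_{\ep,*}=O(\ep^{3/2+\al})$ and then run the contraction in the ball $\{\|\var\|_\ep\le C\ep^{3/2+\al/2}\}$. But this bound is \emph{not} uniform over $y\in B_\de(0)$ for fixed $\de>0$: splitting $V(x)-1=(V(x)-V(y))+(V(y)-V(0))$, the first piece is indeed $O(\ep^{3/2+\al})$ by H\"older continuity and exponential decay, but the second gives a contribution of exact order $(V(y)-V(0))\,\ep^{3/2}$, which for $y$ bounded away from $0$ is only $O(\ep^{3/2})$. Consequently the contraction cannot close in a ball of radius $\ep^{3/2+\al/2}$ uniformly in $y$, and if you enlarge the ball to radius $C\ep^{3/2}$ the error terms in the reduced energy become $O(\ep^3)$, competing with the leading $y$-dependent term $B\ep^3(V(y)-V(0))$ and spoiling the interior-minimum argument.

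The paper's fix (see Lemma \ref{lem: estimate for the first order} and Proposition \ref{prop: reduction map}) is to keep the $(V(y)-V(0))$ dependence explicit: one proves
\[
|l_\ep(\var)|\le C\ep^{3/2}\bigl(\ep^{\al}+(V(y)-V(0))\bigr)\|\var\|_\ep,
\]
and runs the contraction in
\[
N_\ep=\Bigl\{\var\in E_{\ep,y}:\ \|\var\|_\ep\le \ep^{3/2}\bigl(\ep^{\al-\tau}+(V(y)-V(0))^{1-\tau}\bigr)\Bigr\}
\]
for some small $\tau>0$. The point of the sub-unit exponent $1-\tau$ is that the resulting error in the reduced energy is $O\bigl(\ep^{3}(\ep^{\al-\tau}+(V(y)-V(0))^{1-\tau})^{2}\bigr)$, which is \emph{dominated} by $B\ep^{3}(V(y)-V(0))$ precisely when $V(y)-V(0)$ is bounded below (i.e.\ on $\partial B_\de(0)$). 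This is the ``delicate bookkeeping'' you anticipated; once you track the $y$-dependence this way, your minimization/comparison argument for the finite-dimensional step goes through exactly as you describe.
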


We prove Theorem \ref{thm: main result-existence} by using Lyapunov-Schmidt
reduction based on variational methods. It is known that every solution
to Eq. (\ref{eq: Kirchhoff}) is a critical point of the energy functional
$I_{\ep}:H_{\ep}\to\R$, given by
\begin{equation}
I_{\ep}(u)=\frac{1}{2}\|u\|_{\ep}^{2}+\frac{\ep b}{4}\left(\int_{\R^{3}}|\na u|^{2}\right)^{2}-\frac{1}{p+1}\int_{\R^{3}}u{}_{+}^{p+1}\label{eq: variational functional}
\end{equation}
for $u\in H_{\ep}$, where $u_{+}=\max(u,0)$. It is standard to verify
that $I_{\ep}\in C^{2}(H_{\ep})$. So we are left to find a critical
point of $I_{\ep}$. We will follow the scheme of Cao and Peng \cite{Cao-Peng-2009},
and reduce the problem to find a critical point of a finite dimensional
function (see more details in the next section). However, due to the
presence of the nonlocal term $\left(\int_{\R^{3}}|\na u|^{2}\right)\De u$,
it requires more careful estimates on the orders of $\ep$ in the
procedure. In particular, the nonlocal term brings new difficulties
in the higher order remainder term, which is more complicated than
the case of the Schr\"odinger equation (\ref{eq: Schrodinger equations}).

We remark that to establish Theorem \ref{thm: main result-existence},
we can also assume other types of ``critical'' points in the assumption
(V2). However, for simplicity in the present paper, we will restrict
ourselves to the case as assumed in (V2).

\subsection{Uniqueness of semiclassical bounded states}

Now we state the local uniqueness result. We need the following additional
assumption on $V$:

(V3) $V\in C^{1}(\R^{3})$ and there exist $m>1$ and $\de>0$ such
that
\begin{equation}
\begin{cases}
V(x)=V(x_{0})+\sum_{i=1}^{3}c_{i}|x_{i}-x_{0,i}|^{m}+O(|x-x_{0}|^{m+1}), & x\in B_{\de}(x_{0}),\\
\frac{\pa V}{\pa x_{i}}=mc_{i}|x_{i}-x_{0,i}|^{m-2}(x_{i}-x_{0,i})+O(|x-x_{0}|^{m}), & x\in B_{\de}(x_{0}),
\end{cases}\label{eq: assumptin v3}
\end{equation}
where $c_{i}\in\R$ and $c_{i}\neq0$ for $i=1,2,3$.

\begin{theorem} \label{thm: uniqueness}Assume that $V$ satisfies
(V1), (V2) and (V3). If $u_{\ep}^{(i)}$, $i=1,2$, are two solutions
derived as in Theorem \ref{thm: main result-existence}, then
\[
u_{\ep}^{(1)}\equiv u_{_{\ep}}^{(2)}
\]
holds for $\ep$ sufficiently small.

Moreover, let $u_{\ep}=U_{\ep,y_{\ep}}+\var_{\ep}$ be the unique
solution, then there hold
\[
|y_{\ep}-x_{0}|=o(\ep),
\]
\[
\|\var_{\ep}\|_{\ep}=O(\ep^{3/2+m(1-\tau)})
\]
for some $0<\tau<1$ sufficiently small. \end{theorem}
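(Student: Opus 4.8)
The plan is to prove Theorem~\ref{thm: uniqueness} by the \emph{local Pohozaev identity} argument of Deng--Lin--Yan, adapted to the nonlocal Kirchhoff setting. Suppose, for contradiction, that $u_\ep^{(1)}\not\equiv u_\ep^{(2)}$ along some sequence $\ep\to0$. Write $u_\ep^{(i)}=U_{\ep,y_\ep^{(i)}}+\var_\ep^{(i)}$ as in Theorem~\ref{thm: main result-existence}, and set
\[
\xi_\ep=\frac{u_\ep^{(1)}-u_\ep^{(2)}}{\|u_\ep^{(1)}-u_\ep^{(2)}\|_{L^\wq(\R^3)}},
\]
the \emph{normalized difference}, so $\|\xi_\ep\|_{L^\wq}=1$. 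The first block of work is to establish the sharper localization of the concentration points and the remainder, namely $|y_\ep^{(i)}-x_0|=o(\ep)$ and $\|\var_\ep^{(i)}\|_\ep=O(\ep^{3/2+m(1-\tau)})$; this comes from feeding assumption (V3) into the finite-dimensional reduced functional from Section~2 and exploiting that $c_i\neq 0$ makes $x_0$ a nondegenerate-type critical point of the expansion of $V$. These estimates are exactly what is quoted in the ``Moreover'' part of the statement, and they are needed to control error terms later.

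Next I would derive the equation satisfied by $\xi_\ep$. Subtracting the two Kirchhoff equations and dividing by the normalizing constant, $\xi_\ep$ solves a linear equation of the form
\[
-\Bigl(\ep^2a+\ep b\!\int_{\R^3}|\na u_\ep^{(1)}|^2\Bigr)\De\xi_\ep+V(x)\xi_\ep
=c_\ep(x)\,\xi_\ep+\ep b\Bigl(\int_{\R^3}\na(u_\ep^{(1)}+u_\ep^{(2)})\cdot\na\xi_\ep\Bigr)\De u_\ep^{(2)},
\]
where $c_\ep(x)=p\int_0^1(tu_\ep^{(1)}+(1-t)u_\ep^{(2)})^{p-1}\,dt$ and the last term records the nonlocality of the Kirchhoff coefficient. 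Rescaling $x=\ep z+y_\ep^{(1)}$ and passing to the limit (using the decay and convergence $u_\ep^{(i)}\to U$ from Proposition~\ref{prop: Existence}, and that the Kirchhoff coefficient converges to the constant $c=a+b\int_{\R^3}|\na U|^2$ identified in Theorem~\ref{prop: LPX-2016}), one shows $\xi_\ep$ converges in $C^1_{\loc}$ to a bounded solution $\xi_0$ of the \emph{limit linearized equation} $\L\xi_0=0$ with $\L$ as in \eqref{eq: L plus}. By the nondegeneracy half of Theorem~\ref{prop: LPX-2016}, $\xi_0\in\span\{\pa_{x_1}U,\pa_{x_2}U,\pa_{x_3}U\}$, so $\xi_0=\sum_{j=1}^3 d_j\pa_{x_j}U$ for constants $d_j$.

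The heart of the argument is to show $d_j=0$ for all $j$ and then reach a contradiction. For the first part I would apply a \emph{local Pohozaev identity}: multiply the equation for $\xi_\ep$ by suitable test functions (the translations $\pa_{x_j}u_\ep^{(i)}$, or $x\cdot\na$ of the solutions, localized by a cutoff on $B_\de(x_0)$) and integrate by parts over $B_\de(x_0)$. The interior terms produce, to leading order, expressions like $\int \pa_{x_i}V\,\xi_\ep\,(u_\ep^{(1)}+u_\ep^{(2)})$; inserting (V3) and the refined estimates $|y_\ep^{(i)}-x_0|=o(\ep)$, $\|\var_\ep^{(i)}\|_\ep=O(\ep^{3/2+m(1-\tau)})$, these evaluate (after rescaling and dividing by the appropriate power $\ep^{3+m}$ of $\ep$) to $C d_i\int_{\R^3}|z_i|^{m-2}z_i\,\pa_{z_i}U\cdot U\,dz + o(1)$, with a nonzero constant $C$ times $c_i$; the boundary terms are exponentially small because everything decays like $e^{-\de|x|/\ep}$ on $\pa B_\de$. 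The nonlocal Kirchhoff contribution must be tracked carefully here, but since its coefficient is the constant $c$ plus a lower-order piece, it contributes a term proportional to $(\int\na U\cdot\na\xi_0)\int z_i\,\De U\,\pa_{z_i}U$ that can be absorbed or shown to vanish by parity. Concluding $d_i=0$ for each $i$ gives $\xi_0\equiv0$, i.e. $\xi_\ep\to0$ in $C^1_{\loc}$. Finally, a separate estimate away from the concentration point (a maximum-principle / elliptic-estimate argument using that $c_\ep(x)-V(x)$ is negative outside a small ball, together with the decay bounds) shows $\|\xi_\ep\|_{L^\wq}$ is attained near $x_0$ and must then also go to $0$, contradicting $\|\xi_\ep\|_{L^\wq}=1$. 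Hence $u_\ep^{(1)}\equiv u_\ep^{(2)}$ for $\ep$ small.

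The main obstacle I expect is the bookkeeping in the local Pohozaev identity: one must identify the precise power of $\ep$ at which the $|x_i-x_{0,i}|^m$-term of $V$ shows up, confirm that it dominates both the $O(|x-x_0|^{m+1})$ remainder of (V3) and the contributions of $\var_\ep^{(i)}$ and of the nonlocal Kirchhoff term, and check that the boundary integrals on $\pa B_\de(x_0)$ are genuinely negligible. The nonlocality is the new feature relative to Cao--Li--Luo \cite{Cao-Li-Luo-2015}: every integration by parts generates an extra global term $b(\int\na(\cdot)\cdot\na(\cdot))\De(\cdot)$, and one has to verify these either cancel by symmetry or are of strictly lower order, which is exactly where the constancy of $\int_{\R^3}|\na U|^2$ from Theorem~\ref{prop: LPX-2016} is used to keep the leading Kirchhoff coefficient from fluctuating.
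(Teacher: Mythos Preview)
Your proposal is correct and follows the same overall architecture as the paper: normalize the difference, pass to the limit using the nondegeneracy from Theorem~\ref{prop: LPX-2016} to land in $\span\{\pa_{x_i}U\}$, kill the coefficients $d_i$ via a local Pohozaev identity, and reach a contradiction via the uniform decay at infinity. Two points of methodological difference are worth noting.

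First, for the refined localization $|y_\ep-x_0|=o(\ep)$, you propose to extract it from the reduced functional $j_\ep$ under (V3). The paper instead applies the local Pohozaev identity (Proposition~\ref{prop: Pohozaev identity}) directly to a \emph{single} solution $u_\ep$ on a ball $B_d(y_\ep)$, where $d\in(1,2)$ is chosen by a slicing argument so that the boundary integrals are controlled by $\|\var_\ep\|_\ep^2$; this yields (Proposition~\ref{prop: rate of concentration}) the rate $o(\ep)$ without revisiting the variational reduction. Your route can be made to work but requires differentiating the error terms in $j_\ep$, which is more delicate.

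Second, for showing $d_i=0$, you suggest multiplying the equation for $\xi_\ep$ by localized test functions. The paper instead applies the Pohozaev identity \eqref{eq: Pohozaev} \emph{separately} to $u_\ep^{(1)}$ and $u_\ep^{(2)}$ on the same ball $B_d(y_\ep^{(1)})$ and subtracts; this packages the boundary terms cleanly and, crucially, the nonlocal Kirchhoff contributions appear only as global coefficients $\ep^2a+\ep b\int|\na u_\ep^{(i)}|^2=O(\ep^2)$ multiplying boundary integrals that are already $O(\|\var_\ep\|_\ep^2)$, so no separate cancellation by parity is needed. The paper also inserts an intermediate energy bound $\|\xi_\ep\|_\ep=O(\ep^{3/2})$ (Proposition~\ref{prop: asym of normalized solutions}), obtained by testing the symmetrized $\xi_\ep$-equation against $\xi_\ep$ itself and discarding the nonnegative Kirchhoff terms; this bound is what makes the error terms in the Pohozaev subtraction genuinely lower order, and you should make it explicit in your argument.
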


We remark that if $V$ satisfies (V1), (V2) and (V3), then we must
have $c_{i}>0$ for each $i=1,2,3$ in (\ref{eq: assumptin v3}).
In fact, the assumption (V2) in Theorem \ref{thm: uniqueness} is
only for the use of the existence result of Theorem \ref{thm: main result-existence}.
The arguments of Theorem \ref{thm: uniqueness} show that we can replace
(V2) by working in the class of solutions that satisfy some properties
implied by Theorem \ref{thm: main result-existence}. In this way,
the coefficients $c_{i}$ in (\ref{eq: assumptin v3}) are allowed
to have different signs. For the sake of brevity we only present Theorem
\ref{thm: uniqueness} here, but leave the more general local uniqueness
result in Section \ref{sec: Proof-of-uniqueness-Theorem } (see Theorem
\ref{thm: generalized uniqueness result}).

To prove Theorem \ref{thm: uniqueness}, we will follow the idea of
Cao, Li and Luo \cite{Cao-Li-Luo-2015}. More precisely, if $u_{\ep}^{(i)}$,
$i=1,2$, are two distinct solutions derived as in Theorem \ref{thm: main result-existence},
then it is clear that the function
\[
\xi_{\ep}=(u_{\ep}^{(1)}-u_{\ep}^{(2)})/\|u_{\ep}^{(1)}-u_{\ep}^{(2)}\|_{L^{\wq}(\R^{3})}
\]
satisfies $\|\xi_{\ep}\|_{L^{\wq}(\R^{3})}=1$. We will show, by using
the equations satisfied by $\xi_{\ep}$, that $\|\xi_{\ep}\|_{L^{\wq}(\R^{3})}\to0$
as $\ep\to0$. This gives a contradiction, and thus follows the uniqueness.
To deduce the contradiction, we will need quite delicate estimates
on the asymptotic behaviors of solutions and the concentrating point
$y_{\ep}$. A main tool is a local Pohozaev type identity (see (\ref{prop: Pohozaev identity})).
Again, due to the presence of the nonlocal term $\left(\int_{\R^{3}}|\na u|^{2}\right)\De u$,
the local Pohozaev identity is more complicated than the case of the
Schr\"odinger equation (\ref{eq: Schrodinger equations}). More careful
analysis in the procedure are needed.

Before closing this subsection, let us point out that either in the
literature cited as above or in the present work, solutions to Eq.
(\ref{eq: Kirchhoff}) are of single peak. That is, solutions concentrate
at only one strict local minima of the potential $V$ with only one
peak. It has been known for long time that singularly perturbed Schr\"odinger
equations have multi-peak solutions concentrated at one or more critical
points of $V$ (see e.g. Oh \cite{Oh-1990} and Noussair and Yan \cite{Noussair-Yan-2000}).
However, it seems that there have no results on multi-peak solutions
of singularly perturbed Kirchhoff equations. Thus, a natural question
to be considered after this work is to construct multi-peak solutions
for problem (\ref{eq: Kirchhoff}) under suitable conditions on the
potential $V$, and to show that such constructions are locally unique
as well. We will explore this problem in the forthcoming paper.

\subsection{Organization of the paper and notations}

The paper is organized as follows. In section \ref{sec: Proof-of-Theorem 1.1},
we prove Theorem \ref{prop: LPX-2016}. Then, in Section 3 we give
some preliminaries that will be used for the applications of Theorem
\ref{prop: LPX-2016} later. In section \ref{sec: proof of existence}
we first reduce the problem of finding a critical point for $I_{\ep}$
to that of a finite dimensional function, and then complete the proof
of Theorem \ref{thm: main result-existence}. In section 5, we further
explore some properties of the solutions derived as Theorem \ref{thm: main result-existence},
and introduce a local Pohozaev type identity for solutions to Eq.
(\ref{eq: Kirchhoff}). In section 6, we prove Theorem \ref{thm: uniqueness}.
For brevity, some elementary but long calculations are left in Appendix
A and Appendix B.

Our notations are standard. Denote $u_{+}=\max(u,0)$ for $u\in\R$.
We use $B_{R}(x)$ (and $\bar{B}_{R}(x)$) to denote open (and close)
balls in $\R^{3}$ centered at $x$ with radius $R$. For any $1\le s\le\infty$,
$L^{s}(\R^{3})$ is the standard Banach space of real-valued Lebesgue
measurable functions. A function $u$ belongs to the Sobolev space
$H^{1}(\R^{3})$ if $u$ and all of its first order weak partial derivatives
belong to $L^{2}(\R^{3})$. We use $H^{-1}(\R^{3})$ to denote the
dual space of $H^{1}(\R^{3})$. For the properties of the Sobolev
functions, we refer to the monograph \cite{Ziemer}. By the usual
abuse of notations, we write $u(x)=u(r)$ with $r=|x|$ whenever $u$
is a radial function in $\R^{3}$. We will use $C$ and $C_{j}$ ($j\in\N$)
to denote various positive constants, and $O(t)$, $o(t)$ to mean
$|O(t)|\le C|t|$ and $o(t)/t\to0$ as $t\to0$, respectively.

\section{Proof of Theorem \ref{prop: LPX-2016}\label{sec: Proof-of-Theorem 1.1}}

In this section we prove Theorem \ref{prop: LPX-2016}. Throughout this section we denote by $Q\in H^{1}(\R^{3})$ the unique positive radial function that satisfies \begin{eqnarray} -\De Q+Q=Q^{p} &  & \text{in }\R^{3}.\label{eq: Kwong} \end{eqnarray}  We refer to e.g. Berestycki and Lions \cite{Berestycki-Lions-1983-1} and Kwong \cite{Kwong-1989} for the existence and uniqueness of $Q$, respectively.

\subsection{Uniqueness}

In this subsection we prove the uniqueness part of Theorem \ref{prop: LPX-2016}.
\begin{proof}[Proof of  Uniqueness] Let $u\in H^{1}(\R^{3})$ be an arbitrary positive solution to Eq. (\ref{eq:  limiting Kirchhoff}). Write $c=a+b\int_{\R^{3}}|\na u|^{2}\D x$ so that $u$ satisfies \begin{eqnarray*} -c\De u+u=u^{p} &  & \text{in }\R^{3}. \end{eqnarray*} Then, it is direct to verify that $u(\sqrt{c}(\cdot-t))$ solves Eq. (\ref{eq: Kwong}) for any $t\in\R^{3}$. Thus, the uniqueness of $Q$ implies that \begin{eqnarray*} u(x)=Q\left(\frac{x-t}{\sqrt{c}}\right), &  & x\in\R^{3}, \end{eqnarray*} for some $t\in\R^{3}$. In particular, we obtain $\int_{\R^{3}}|\na u|^{2}\D x=\sqrt{c}\int_{\R^{3}}|\na Q|^{2}\D x$. Substituting this equality into the definition of $c$ yields \[ c=a+b\|\na Q\|_{2}^{2}\sqrt{c}. \] Since $c>0$, this equation is uniquely solved by \begin{equation} \sqrt{c}=\frac{1}{2}\left(b\|\na Q\|_{2}^{2}+\sqrt{b^{2}\|\na Q\|_{2}^{4}+4a}\right).\label{eq: value of c} \end{equation} As a consequence, we deduce that \[ u(x)=Q\left(\frac{2(x-t)}{b\|\na Q\|_{2}^{2}+\sqrt{b^{2}\|\na Q\|_{2}^{4}+4a}}\right) \] for some $t\in\R^{3}$. At this moment, we can easily conclude that the set \[ {\cal M}=\left\{ Q\left(\frac{2(x-t)}{b\|\na Q\|_{2}^{2}+\sqrt{b^{2}\|\na Q\|_{2}^{4}+4a}}\right):t\in\R^{3}\right\} \] consists of all the positive solutions of Eq. (\ref{eq: Kirchhoff}). This finishes the proof.\end{proof}
Note that (\ref{eq: value of c}) implies that the value of $c$ is independent of the choice of positive solutions. This fact will be used repeatly below.

As a consequence, we point out that the following result can be derived naturally.
\begin{corollary} The ground state energy $m$ is an isolated critical value of $I$. \end{corollary}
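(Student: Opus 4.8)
The plan is to classify the nontrivial critical points of $I$ according to their sign, using the description of all positive solutions obtained above. If $u\in H^{1}(\R^{3})$ is a nontrivial critical point of $I$ that does not change sign, then, replacing $u$ by $-u$ if necessary (recall $I(-u)=I(u)$), we may assume $u\ge 0$; the strong maximum principle applied to $-(a+b\|\na u\|_{2}^{2})\De u+u=u^{p}$ then forces $u>0$, so $u$ is a positive solution of (\ref{eq: limiting Kirchhoff}) and hence $u\in\mathcal{M}$, which gives $I(u)=m$. Thus the whole point is to show that \emph{sign-changing} critical points of $I$ have energy bounded below by a constant \emph{strictly larger} than $m$; since $I(0)=0$ and $m>0$, this exhibits a neighbourhood of $m$ in which $m$ is the only critical value of $I$.

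To obtain such a lower bound I would first record a rigidity identity for critical points. Testing $I'(u)=0$ against $u$ and against $x\cdot\na u$ yields, for every nontrivial critical point $u$, the Nehari identity and a (frozen-coefficient) Pohozaev identity
\[
a\|\na u\|_{2}^{2}+\|u\|_{2}^{2}+b\|\na u\|_{2}^{4}=\|u\|_{p+1}^{p+1},\qquad
\tfrac12\big(a+b\|\na u\|_{2}^{2}\big)\|\na u\|_{2}^{2}+\tfrac32\|u\|_{2}^{2}=\tfrac{3}{p+1}\|u\|_{p+1}^{p+1};
\]
eliminating $\|u\|_{2}^{2}$ and $\|u\|_{p+1}^{p+1}$ gives the clean formula
\[
I(u)=\tfrac13 a\|\na u\|_{2}^{2}+\tfrac1{12}b\|\na u\|_{2}^{4}=:g\big(\|\na u\|_{2}^{2}\big),
\]
with $g$ strictly increasing on $[0,\wq)$; in particular $g(\|\na U\|_{2}^{2})=m$.

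Next, for a sign-changing critical point $u$ I would set $c:=a+b\|\na u\|_{2}^{2}>0$, so that $u$ solves $-c\De u+u=|u|^{p-1}u$. Testing this equation against $u^{+}=\max(u,0)$ and against $u^{-}=\max(-u,0)$, both nonzero, shows that $u^{+}$ and $u^{-}$ each lie on the Nehari manifold of the Schr\"odinger energy $J_{c}(v):=\tfrac{c}{2}\|\na v\|_{2}^{2}+\tfrac12\|v\|_{2}^{2}-\tfrac1{p+1}\|v\|_{p+1}^{p+1}$; by the scaling $v\mapsto v(\sqrt{c}\,\cdot)$, the least value of $J_{c}$ on that Nehari manifold equals $c^{3/2}m_{1}$, where $m_{1}>0$ is the ground-state energy of (\ref{eq: Kwong}). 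Since $J_{c}(u)=J_{c}(u^{+})+J_{c}(u^{-})$ and a direct computation gives $I(u)=J_{c}(u)-\tfrac{b}{4}\|\na u\|_{2}^{4}$, we get $I(u)\ge 2c^{3/2}m_{1}-\tfrac{b}{4}t^{2}$ with $t:=\|\na u\|_{2}^{2}$ and $c=a+bt$. Combining this with $I(u)=g(t)$ and simplifying reduces to $t\ge 6m_{1}\sqrt{a+bt}$, i.e. $t\ge\ell_{2}$, where $\ell_{2}$ is the positive root of $\la^{2}-36m_{1}^{2}b\la-36m_{1}^{2}a=0$. On the other hand, applying the same identities to $U$ (which, by the classification, is the ground state of $J_{c_{*}}$ with $c_{*}=a+b\|\na U\|_{2}^{2}$, so $I(U)=J_{c_{*}}(U)-\tfrac b4\|\na U\|_{2}^{4}=c_{*}^{3/2}m_{1}-\tfrac b4\|\na U\|_{2}^{4}$) gives $\ell:=\|\na U\|_{2}^{2}=3m_{1}\sqrt{a+b\ell}$, whence $\ell<\ell_{2}$; by monotonicity of $g$ this yields $I(u)=g(t)\ge g(\ell_{2})>g(\ell)=m$ for every sign-changing critical point $u$.

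Putting the two cases together, every nontrivial critical point of $I$ has energy either equal to $m$ or at least $g(\ell_{2})>m$, while the only other critical value, $0$, is separated from $m$ because $m>0$; hence $m$ is an isolated critical value of $I$, in fact with gap at least $\min\{m,\,g(\ell_{2})-m\}$. I do not expect any genuinely hard step here: the two substantive inputs are the classification $\mathcal{M}$, already established, and the elementary but crucial fact that through the splitting $J_{c}(u)=J_{c}(u^{+})+J_{c}(u^{-})$ a sign-changing solution is energetically at least twice as expensive as a ground state. The only points requiring care are the justification of the frozen-coefficient Pohozaev identity for the nonlocal equation — handled exactly as in the uniqueness proof above — and the verification that $u^{\pm}$ belong to the Nehari manifold of $J_{c}$; the remaining manipulations are routine algebra.
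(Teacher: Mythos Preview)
Your argument is correct. The paper itself does not supply a proof of this corollary; it merely states that the result ``can be derived naturally'' from the classification of positive solutions, so there is no proof in the paper to compare against. Your proposal fills this gap completely.

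A few remarks on the substance. The identity $I(u)=\tfrac{a}{3}t+\tfrac{b}{12}t^{2}$ with $t=\|\na u\|_{2}^{2}$, obtained by combining the Nehari and Pohozaev relations, is the right organizing tool, and your algebra checks out: for a sign-changing critical point one gets $\tfrac{t}{3}(a+bt)\ge 2(a+bt)^{3/2}m_{1}$, hence $t\ge 6m_{1}\sqrt{a+bt}$, while for $U$ the same computation with a single ground state gives $\ell=3m_{1}\sqrt{a+b\ell}$; the comparison $\ell<\ell_{2}$ is then immediate. The splitting $J_{c}(u)=J_{c}(u^{+})+J_{c}(u^{-})$ and the fact that each piece lies on the Nehari manifold of $J_{c}$ are standard, and the scaling $v\mapsto v(\sqrt{c}\,\cdot)$ correctly gives $\inf_{\mathcal N_{c}}J_{c}=c^{3/2}m_{1}$.

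The only technical point that deserves a line in a formal write-up is the Pohozaev identity for a general (possibly sign-changing) $H^{1}$ critical point: since $x\cdot\na u\notin H^{1}$ in general, one should invoke the standard cutoff argument (or simply cite a reference such as Berestycki--Lions) rather than treat it as a formal pairing. You already flagged this. Everything else is routine.
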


%Furthermore, the ground state energy $m$  can also be given explicitly in terms of the function $Q$. We leave the details to the interested readers.

\subsection{Nondegeneracy}

In this subsection we prove the nondegeneracy part of Theorem \ref{prop: LPX-2016}. We need the following result.
\begin{proposition} \label{thm: Kwong and Chang} Let $1<p<5$ and let $Q\in H^{1}(\R^{3})$ be the unique positive radial ground state of Eq. (\ref{eq: Kwong}). Define the operator ${\cal A}:L^{2}(\R^{3})\to L^{2}(\R^{3})$ as \[ {\cal A}\var=-\De\var+\var-pQ^{p-1}\var \] for $\var\in L^{2}(\R^{3})$. Then the following hold:

(1) $Q$ is nondegenerate in $H^{1}(\R^{3})$, that is, \[ \Ker{\cal A}=\span\left\{ \pa_{x_{1}}Q,\pa_{x_{2}}Q,\pa_{x_{3}}Q\right\} ; \]

(2) The restriction of ${\cal A}$ on $L_{\rad}^{2}(\R^{3})$ is one-to-one and thus it has an inverse ${\cal A}^{-1}:L_{\rad}^{2}(\R^{3})\to L_{\rad}^{2}(\R^{3})$;

(3) ${\cal A}Q=-(p-1)Q^{p}$ and \[ {\cal A}R=-2Q, \] where $R=\frac{2}{p-1}Q+x\cdot\na Q$.\end{proposition}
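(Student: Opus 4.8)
The plan is to prove the three parts of Proposition~\ref{thm: Kwong and Chang} essentially by reducing everything to the classical nondegeneracy theory for the scalar equation~(\ref{eq: Kwong}). Part~(1) is precisely Kwong's nondegeneracy theorem \cite{Kwong-1989} (see also Chang et al.~\cite{Chang et al-2007}) for the ground state $Q$ of $-\Delta Q+Q=Q^p$ in $\R^3$; I would simply cite it, since the operator $\cal A$ here is literally the linearized operator of~(\ref{eq: Kwong}) at $Q$. There is nothing to prove beyond quoting the reference; the subcriticality $1<p<5$ in dimension $3$ is exactly the hypothesis needed.

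For part~(2), I would argue as follows. Decompose $L^2(\R^3)$ via spherical harmonics; the kernel of $\cal A$ on all of $L^2(\R^3)$ is, by part~(1), spanned by $\pa_{x_1}Q,\pa_{x_2}Q,\pa_{x_3}Q$, each of which has the form (radial profile)$\times$(first-order spherical harmonic) and hence is $L^2$-orthogonal to the radial subspace $L^2_{\rad}(\R^3)$. Therefore the restriction $\mathcal A|_{L^2_{\rad}}$ has trivial kernel, i.e.\ is one-to-one. To get that it is actually invertible on $L^2_{\rad}(\R^3)$ (with bounded inverse), I would note that $\cal A$ is a relatively compact perturbation of $-\Delta+1$ (since $pQ^{p-1}\to 0$ at infinity and is bounded), so on each spherical-harmonic sector it is Fredholm of index zero; being injective on the radial sector it is therefore surjective there, and the inverse $\mathcal A^{-1}:L^2_{\rad}(\R^3)\to L^2_{\rad}(\R^3)$ is bounded by the closed graph theorem (or one works in the Sobolev setting $H^2_{\rad}\to L^2_{\rad}$ where Fredholmness is cleanest, then notes the range includes $L^2_{\rad}$). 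I expect this to be the only part requiring any genuine functional-analytic care, mainly in phrasing the Fredholm/invertibility statement cleanly; it is routine but is the ``main obstacle'' in the sense that parts (1) and (3) are respectively a citation and a direct computation.

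For part~(3), both identities are direct computations exploiting the scaling structure of~(\ref{eq: Kwong}). First, applying $\cal A$ to $Q$ itself: $\mathcal AQ=-\Delta Q+Q-pQ^{p}$, and since $-\Delta Q+Q=Q^p$ this equals $Q^p-pQ^p=-(p-1)Q^p$. Second, for $R=\tfrac{2}{p-1}Q+x\cdot\na Q$, I would use the standard Pohozaev/dilation identity: differentiating the family $Q_\lambda(x):=\lambda^{2/(p-1)}Q(\lambda x)$ (which solves $-\Delta Q_\lambda+\lambda^2 Q_\lambda=Q_\lambda^p$) with respect to $\lambda$ at $\lambda=1$. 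Writing $W:=\frac{d}{d\lambda}\big|_{\lambda=1}Q_\lambda=\tfrac{2}{p-1}Q+x\cdot\na Q=R$ and differentiating the equation gives $-\Delta W+W+2Q=pQ^{p-1}W$, that is $\mathcal AR=-\Delta R+R-pQ^{p-1}R=-2Q$. Alternatively one verifies $-\Delta(x\cdot\na Q)=x\cdot\na(\Delta Q)+2\Delta Q$ and $-pQ^{p-1}(x\cdot\na Q)=-x\cdot\na(Q^p)$ directly and assembles the pieces using~(\ref{eq: Kwong}); either route is a short calculation with no obstruction. These two identities are exactly what is needed later to handle the nonlocal term $2b(\int\na U\cdot\na\var)\Delta U$ when inverting $\cal L$ on the radial subspace.
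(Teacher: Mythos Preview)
Your proposal is correct and follows essentially the same approach as the paper: the paper simply cites Chang et al.\ \cite{Chang et al-2007} for (1), deduces (2) in one line from ${\rm Ker}\,{\cal A}\cap L^2_{\rad}(\R^3)=\emptyset$, and says (3) is a direct computation. Your treatment of (2) via Fredholm index theory is more careful than the paper's, which does not address surjectivity at all; in practice the paper only ever applies ${\cal A}^{-1}$ to the specific radial functions $u$ and $u^p$ (whose preimages are exhibited explicitly in (3)), so the full invertibility statement is never actually needed downstream.
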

For a brief proof of (1), we refer to Chang et al. \cite[Lemma 2.1]{Chang et al-2007} (see also the references therein); (2) is an easy consequence of (1) since $Q$ is radial and ${\rm Ker}{\cal A}\cap L_{\rad}^{2}(\R^{3})=\emptyset$; the last result can be obtained by a direct computation, see also Eq. (2.1) of Chang et al. \cite{Chang et al-2007}.

Next, we introduce an auxiliary operator. Let $u$ be a positive solution of Eq. (\ref{eq: limiting Kirchhoff}). Since Eq. (\ref{eq: limiting Kirchhoff}) is translation invariant, we assume with no loss of generality that $u$ is radially symmetric with respect to the origin. Write $c=a+b\int_{\R^{3}}|\na u|^{2}\D x$. Keep in mind that $c$ is a constant that is independent of the choice of $u$ by (\ref{eq: value of c}). Then $u$ satisfies \begin{eqnarray} -c\De u+u-u^{p}=0 &  & \text{in }\R^{3}.\label{eq: auxiliary equation} \end{eqnarray} Define the auxiliary operator ${\cal A}_{u}:L^{2}(\R^{3})\to L^{2}(\R^{3})$ as \[ {\cal A}_{u}\var=-c\De\var+\var-pu^{p-1}\var \] for $\var\in L^{2}(\R^{3})$. The following result on ${\cal A}_{u}$ follows easily from Proposition \ref{thm: Kwong and Chang}.
\begin{proposition} \label{prop: property of Au} ${\cal A}_{u}$ satisfies the following properties:

(1) The kernel of ${\cal A}_{u}$ is given by \[ \Ker{\cal A}_{u}=\span\left\{ \pa_{x_{1}}u,\pa_{x_{2}}u,\pa_{x_{3}}u\right\} ; \]

(2) The restriction of ${\cal A}_{u}$ on $L_{\rad}^{2}(\R^{3})$ is one-to-one and thus it has an inverse ${\cal A}_{u}^{-1}:L_{\rad}^{2}(\R^{3})\to L_{\rad}^{2}(\R^{3})$;

(3) ${\cal A}_{u}u=-(p-1)u^{p}$ and \[ {\cal A}_{u}S=-2u, \] where $S=\frac{2}{p-1}u+x\cdot\na u$. \end{proposition}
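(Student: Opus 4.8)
The plan is to deduce all three properties from Proposition \ref{thm: Kwong and Chang} by exploiting the explicit form of $u$ obtained in the uniqueness proof above. Since $u$ is the positive radial solution of Eq. (\ref{eq: limiting Kirchhoff}) centered at the origin, that argument gives $u(x)=Q\big(x/\sqrt c\big)$ with $c=a+b\int_{\R^3}|\na u|^2\,\D x$, and in particular $u^{p-1}(x)=Q^{p-1}(x/\sqrt c)$. The key observation I would record first is that ${\cal A}_u$ is conjugate to ${\cal A}$ through the dilation $D_{\sqrt c}\psi:=\psi(\cdot/\sqrt c)$: writing $\var=D_{\sqrt c}\psi$ and using $(\De\var)(x)=\tfrac1c(\De\psi)(x/\sqrt c)$, one gets
\[
({\cal A}_u\var)(x)=-c(\De\var)(x)+\var(x)-pu^{p-1}(x)\var(x)=({\cal A}\psi)(x/\sqrt c),
\]
that is, ${\cal A}_u=D_{\sqrt c}\circ{\cal A}\circ D_{\sqrt c}^{-1}$. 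Since $D_{\sqrt c}$ is a linear isomorphism of $L^2(\R^3)$, and of $L^2_{\rad}(\R^3)$, onto itself (carrying the relevant domains to one another), every structural statement for ${\cal A}$ transfers to ${\cal A}_u$.

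For (1): $\var\in\Ker{\cal A}_u$ iff $D_{\sqrt c}^{-1}\var\in\Ker{\cal A}=\span\{\pa_{x_i}Q\}$ by Proposition \ref{thm: Kwong and Chang}(1); since $\pa_{x_i}u=\tfrac1{\sqrt c}\,D_{\sqrt c}(\pa_{x_i}Q)$, the image of $\span\{\pa_{x_i}Q\}$ under $D_{\sqrt c}$ is exactly $\span\{\pa_{x_1}u,\pa_{x_2}u,\pa_{x_3}u\}$, which proves the kernel identity. For (2): $D_{\sqrt c}$ preserves radial symmetry, so injectivity of ${\cal A}$ on $L^2_{\rad}(\R^3)$ from Proposition \ref{thm: Kwong and Chang}(2), together with the same Fredholm reasoning, passes verbatim to ${\cal A}_u$, yielding the inverse ${\cal A}_u^{-1}:L^2_{\rad}(\R^3)\to L^2_{\rad}(\R^3)$.

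For (3): the first identity is immediate from Eq. (\ref{eq: auxiliary equation}), since ${\cal A}_u u=(-c\De u+u)-pu^{p-1}u=u^p-pu^p=-(p-1)u^p$. For ${\cal A}_u S=-2u$ I would note that under the change of variables $y=x/\sqrt c$ one has $x\cdot\na u(x)=y\cdot\na Q(y)$, hence $S=D_{\sqrt c}R$ with $R=\tfrac2{p-1}Q+x\cdot\na Q$; then the conjugation together with Proposition \ref{thm: Kwong and Chang}(3) gives ${\cal A}_u S=D_{\sqrt c}({\cal A}R)=D_{\sqrt c}(-2Q)=-2u$. Alternatively, ${\cal A}_u S=-2u$ can be verified directly from the dilation identity $\De(x\cdot\na u)=x\cdot\na(\De u)+2\De u$ combined with Eq. (\ref{eq: auxiliary equation}).

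There is essentially no serious obstacle here: once the conjugation identity ${\cal A}_u=D_{\sqrt c}\circ{\cal A}\circ D_{\sqrt c}^{-1}$ is in place, everything reduces to bookkeeping. The only points that need a little care are tracking the scaling factors $\sqrt c$ relating $\pa_{x_i}u$ to $\pa_{x_i}Q$ and $S$ to $R$, and making precise that $D_{\sqrt c}$ is an isomorphism on both $L^2(\R^3)$ and $L^2_{\rad}(\R^3)$ so that kernels, injectivity, and the explicit images all transfer cleanly; the fact that $c$ is the universal constant of (\ref{eq: value of c}) is not needed for this proposition, only its positivity.
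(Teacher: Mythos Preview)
Your proposal is correct and follows exactly the approach indicated in the paper: the authors write ``Apply Proposition \ref{thm: Kwong and Chang} to $\tilde{u}$ defined by $\tilde{u}(x)=u(\sqrt{c}x)=Q(x)$'' and leave the details to the reader, and you have carried out precisely those details via the conjugation ${\cal A}_u=D_{\sqrt c}\circ{\cal A}\circ D_{\sqrt c}^{-1}$. Your scaling computations for $\pa_{x_i}u$, $S$, and the conjugation identity are all correct.
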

\begin{proof} Apply Proposition \ref{thm: Kwong and Chang} to $\tilde{u}$ defined by $\tilde{u}(x)=u(\sqrt{c}x)=Q(x)$. We leave the details to the interested readers. \end{proof} We will also use the standard spherical harmonics to decompose functions in $H^{j}(\R^{N})$ for $j=0,1$, where $N=3$ (see e.g. Ambrosetti and Malchiodi \cite[Chapter 4]{Ambrosetti-Malchiodi-Book}). So let us introduce some necessary notations for the decomposition. Denote by $\De_{\S^{N-1}}$ the Laplacian-Beltrami operator on the unit $N-1$ dimensional sphere $\S^{N-1}$ in $\R^{N}$. Write \begin{eqnarray*} M_{k}=\frac{(N+k-1)!}{(N-1)!k!}\quad\forall\, k\ge0, & \text{and } & M_{k}=0\quad\forall\, k<0. \end{eqnarray*} Denote by $Y_{k,l}$, $k=0,1,\ldots$ and $1\le l\le M_{k}-M_{k-2}$, the spherical harmonics such that \[ -\De_{\S^{N-1}}Y_{k,l}=\la_{k}Y_{k,l} \] for all $k=0,1,\ldots$ and $1\le l\le M_{k}-M_{k-2}$, where \begin{eqnarray*} \la_{k}=k(N+k-2) &  & \forall\, k\ge0 \end{eqnarray*} is an eigenvalue of $-\De_{\S^{N-1}}$ with multiplicity$M_{k}-M_{k-2}$ for all $k\in\N$. In particular, $\la_{0}=0$ is of multiplicity 1 with $Y_{0,1}=1$, and $\la_{1}=N-1$ is of multiplicity $N$ with $Y_{1,l}=x_{l}/|x|$ for $1\le l\le N$.
Then for any function $v\in H^{j}(\R^{N})$, we have the decomposition \[ v(x)=v(r\Om)=\sum_{k=0}^{\wq}\sum_{l=1}^{M_{k}-M_{k-2}}v_{kl}(r)Y_{kl}(\Om) \] with $r=|x|$ and $\Om=x/|x|$, where \begin{eqnarray*} v_{kl}(r)=\int_{\S^{N-1}}v(r\Om)Y_{kl}(\Om)\D\Om &  & \forall\, k,l\ge0. \end{eqnarray*} Note that $v_{kl}\in H^{j}(\R_{+},r^{N-1}\D r)$ holds for all $k,l\ge0$ since $v\in H^{j}(\R^{N})$.

Now we start the proof of nondeneracy part of Theorem \ref{prop: LPX-2016}. We first prove that $u$ is nondegenerate in $H_{\rad}^{1}(\R^{3})$ (in the sense of the following proposition), which is the key ingredient in the proof.
\begin{proposition} \label{prop: nonexistence of radial solutions} Let $\L$ be defined as in (\ref{eq: L plus}) and let $\var\in H_{\rad}^{1}(\R^{3})$ be such that $\L\var=0$. Then $\var\equiv0$ in $\R^{3}$. \end{proposition}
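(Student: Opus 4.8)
The plan is to use the radial decomposition set up above together with the identity $\L\var = {\cal A}_u\var - 2b\bigl(\int_{\R^3}\na u\cdot\na\var\bigr)\De u$, valid for radial $\var$, where ${\cal A}_u$ is the auxiliary operator whose properties are recorded in Proposition \ref{prop: property of Au}. So suppose $\var\in H^1_{\rad}(\R^3)$ solves $\L\var=0$, i.e.
\[
{\cal A}_u\var = 2b\left(\int_{\R^3}\na u\cdot\na\var\right)\De u .
\]
Set $\mu = 2b\int_{\R^3}\na u\cdot\na\var$, a real number. First I would dispose of the trivial case: if $\mu=0$ then ${\cal A}_u\var=0$, and since $\var$ is radial, part (2) of Proposition \ref{prop: property of Au} (injectivity of ${\cal A}_u$ on $L^2_{\rad}$) forces $\var\equiv0$. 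The whole point is therefore to rule out $\mu\neq0$.

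Assuming $\mu\neq 0$, I would exploit part (3) of Proposition \ref{prop: property of Au}, which gives ${\cal A}_u S = -2u$ with $S=\frac{2}{p-1}u+x\cdot\na u$. Since $-c\De u = u^p - u$, we have $\De u = (u - u^p)/c$, so the right-hand side above equals $(\mu/c)(u-u^p)$. Using ${\cal A}_u u = -(p-1)u^p$ and the relation $u = {\cal A}_u\!\left(\text{something}\right)$ — more precisely, I would look for an explicit radial function $\psi$ with ${\cal A}_u\psi = u - u^p$; a natural candidate is a linear combination of $S$ and $u$, since ${\cal A}_u S = -2u$ and ${\cal A}_u u = -(p-1)u^p$ show $u$ and $u^p$ are both in the range of ${\cal A}_u$ restricted to the radial class. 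Solving, $u = -\tfrac12{\cal A}_u S$ and $u^p = -\tfrac{1}{p-1}{\cal A}_u u$, hence $u - u^p = {\cal A}_u\!\bigl(-\tfrac12 S + \tfrac{1}{p-1}u\bigr)$. By injectivity of ${\cal A}_u$ on $L^2_{\rad}$ we then conclude
\[
\var = \frac{\mu}{c}\left(-\frac12 S + \frac{1}{p-1}u\right) = \frac{\mu}{c}\left(-\frac12\,x\cdot\na u + \left(\frac{1}{p-1} - \frac{1}{p-1}\right)u\right) = -\frac{\mu}{2c}\,x\cdot\na u,
\]
so that $\var$ is a scalar multiple of $x\cdot\na u$.

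It remains to show this scalar must be zero, which is where the nonlocal structure finally bites. Plugging $\var = \la\, x\cdot\na u$ (with $\la = -\mu/(2c)$) back into the definition $\mu = 2b\int_{\R^3}\na u\cdot\na\var$ gives one scalar equation relating $\mu$ and $\la$; combined with $\la = -\mu/(2c)$ this should force $\mu=0$ unless a certain integral identity holds. Concretely, $\int_{\R^3}\na u\cdot\na(x\cdot\na u)\,\D x$ can be computed by integration by parts (it is essentially $-\tfrac12\int|\na u|^2$ in $\R^3$, using that $\na(x\cdot\na u) = \na u + x\cdot\na(\na u)$ and $\int x\cdot\na(|\na u|^2) = -3\int|\na u|^2$), giving $\mu = 2b\la\cdot(-\tfrac12\|\na u\|_2^2) = -b\la\|\na u\|_2^2$. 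Together with $\mu = -2c\la$ this yields $\la(2c - b\|\na u\|_2^2)=0$. Now $c = a + b\|\na u\|_2^2$ by definition, so $2c - b\|\na u\|_2^2 = 2a + b\|\na u\|_2^2 > 0$ since $a,b>0$; hence $\la=0$, so $\mu=0$, so $\var\equiv0$. The main obstacle is getting the integration-by-parts constants exactly right (dimension $3$ enters, and one must justify the boundary terms vanish, which follows from the exponential decay of $u$ and its derivatives in Proposition \ref{prop: Existence}); the positivity $2a + b\|\na u\|_2^2 > 0$ is then what closes the argument, and it is exactly the nonlocal coefficient $c = a+b\|\na u\|_2^2$ that makes this strict.
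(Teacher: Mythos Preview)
Your argument is correct and follows essentially the same route as the paper: both invert ${\cal A}_u$ on the radial class via Proposition~\ref{prop: property of Au}(3) to get $\var=-\frac{\mu}{2c}\,x\cdot\na u$, then feed this back into the defining relation for $\mu$ and use the integration-by-parts identity $\int\na u\cdot\na(x\cdot\na u)=-\tfrac12\|\na u\|_2^2$ to force the scalar to vanish. The paper packages the final step as $\int\na u\cdot\na\psi=\frac{c-a}{2c}<\frac12\neq1$ (with $\psi=-\frac{b}{c}x\cdot\na u$), which is algebraically equivalent to your condition $2c-b\|\na u\|_2^2=a+c>0$.
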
 \begin{proof} Let $\var\in H_{\rad}^{1}(\R^{3})$ be such that $\L\var=0$. By virtue of the notations introduced above, we can rewrite the equation $\L\var=0$ as below: \[ {\cal A}_{u}\var=2b\left(\int_{\R^{3}}\na u\cdot\na\var\right)\De u. \] We have to prove that $\var\equiv0$. This is sufficient to show that \begin{equation} \int_{\R^{3}}\na u\cdot\na\var=0,\label{eq: Key observation} \end{equation} since then $\var\in\Ker{\cal A}_{u}\cap L_{\rad}^{2}(\R^{3})$, which implies that $\var\equiv0$ by Proposition \ref{prop: property of Au}.

To deduce (\ref{eq: Key observation}), we proceed as follows. Since $u$ is radial and ${\cal A}_{u}$ is one-to-one on $L_{\rad}^{2}(\R^{3})$ by Proposition \ref{prop: property of Au}, $\var$ satisfies the equivalent equation \[ \var=2b\left(\int_{\R^{3}}\na u\cdot\na\var\right){\cal A}_{u}^{-1}(\De u), \] where ${\cal A}_{u}^{-1}$ is the inverse of ${\cal A}_{u}$ restricted on $L_{\rad}^{2}(\R^{3})$.
Next we compute ${\cal A}_{u}^{-1}(\De u)$. By Eq. (\ref{eq: auxiliary equation}), $\De u=(u-u^{p})/c$. Hence ${\cal A}_{u}^{-1}(\De u)=\left({\cal A}_{u}^{-1}(u)-{\cal A}_{u}^{-1}(u^{p})\right)/c$. Applying Proposition \ref{prop: property of Au} (3), we deduce that \[ {\cal A}_{u}^{-1}(\De u)=\frac{1}{c}\left(-\frac{S}{2}+\frac{u}{p-1}\right)=-\frac{1}{2c}x\cdot\na u, \] where $S$ is defined as in Proposition \ref{prop: property of Au}. Therefore, we obtain \[ \var=-\frac{b}{c}\left(\int_{\R^{3}}\na u\cdot\na\var\right)x\cdot\na u=\left(\int_{\R^{3}}\na u\cdot\na\var\right)\psi,\] with $\psi=-\frac{b}{c}x\cdot\na u$.

Now we can deduce (\ref{eq: Key observation}) from the above formula. Taking gradient on both sides gives \[ \na\var=\left(\int_{\R^{3}}\na u\cdot\na\var\right)\na\psi. \] Multiply $\na u$ on both sides and integrate. We achieve \[ \int_{\R^{3}}\na u\cdot\na\var=\left(\int_{\R^{3}}\na u\cdot\na\var\right)\int_{\R^{3}}\na u\cdot\na\psi. \] A direct computation yields that \[ \int_{\R^{3}}\na u\cdot\na\psi=\frac{b}{2c}\int_{\R^{3}}|\na u|^{2}=\frac{c-a}{2c}<\frac{1}{2}. \] Hence we easily deduce that $\int_{\R^{3}}\na u\cdot\na\var=0$, that is, (\ref{eq: Key observation}) holds. The proof of Proposition \ref{prop: nonexistence of radial solutions} is complete. \end{proof}
With the help of Proposition \ref{prop: nonexistence of radial solutions}, we can now finish the proof of Theorem \ref{prop: LPX-2016}. The procedure is standard, see  e.g. Ambrosetti and Malchiodi \cite[Section 4.2]{Ambrosetti-Malchiodi-Book}. For the readers' convenience, we  give a detailed proof.

\begin{proof}[Proof of  nondengeracy] Let $\var\in H^{1}(\R^{3})$ be such that $\L\var=0$. We have to prove that $\var$ is a linear combination of $\pa_{x_{i}}u$, $i=1,2,3$. The idea is to turn the problem $\L\var=0$ into a system of ordinary differential equations by making use of the spherical harmonics to decompose $\var$ into \[ \var=\sum_{k=0}^{\wq}\sum_{l=1}^{M_{k}-M_{k-2}}\var_{kl}(r)Y_{kl}(\Om) \] with $r=|x|$ and $\Om=x/|x|$, where \begin{eqnarray} \var_{kl}(r)=\int_{\S^{2}}\var(r\Om)Y_{kl}(\Om)\D\Om &  & \forall\, k\ge0.\label{eq: kl-th component} \end{eqnarray} Note that $\var_{kl}\in H^{1}(\R_{+},r^{2}\D r)$ holds for all $k,l\ge0$ since $\var\in H^{1}(\R^{3})$.

Combining the fact that  $\int_{\S^2}Y_{kl}\D\si=0$ hold for all $k,l\ge1$,  together with the fact that $u$ is radial, we deduce \[ \int_{\R^{3}}\na u\cdot\na\var=\int_{\R^{3}}\left(-\De u\right)\var=\int_{\R^{3}}\na u\cdot\na\var_{0}, \] where $\var_0(x)=\var_{0,1}(|x|)$ for $x\in\R^3$.
 Hence, the problem $\L\var=0$ is equivalent to the following system of ordinary differential equations:
For $k=0$, we have \begin{equation} \L\var_{0}=0.\label{eq: first case} \end{equation}

For $k=1$, we have \begin{equation} A_1(\var_{1l})\equiv\left(-c\De_{r}+\frac{\la_{1}}{r^{2}}\right)\var_{1l}+\var_{1l}-pu^{p-1}\var_{1l}=0\label{eq: second case} \end{equation} for $l=1,2,3$. Here $\De_r=\pa_{rr}+\frac 2r \pa_r$. We also used the fact that $u$ and $\De u$ are radial functions.

For $k\ge2$, we have that \begin{equation} A_k(\var_{kl})\equiv\left(-c\De_{r}+\frac{\la_{k}}{r^{2}}\right)\var_{kl}+\var_{kl}-pu^{p-1}\var_{kl}=0.\label{eq: third case} \end{equation}

To solve  Eq. (\ref{eq: first case}), we apply Proposition \ref{prop: nonexistence of radial solutions} to conclude that $\var_{0}\equiv0$.

To solve  Eq. (\ref{eq: second case}),  note  that $u^{\prime}$ is a solution of Eq. (\ref{eq: second case})  and $u^{\prime}\in H^{1}(\R_{+},r^{2})$. Since  Eq. (\ref{eq: second case}) is a  second order linear ordinary differential equation, we assume that it has another solution $v(r)=h(r)u^{\prime}(r)$ for some $h$. It is easy to find that $h$ satisfies $$h^{\prime\prime}u^{\prime}+\frac {2}{r}h^{\prime}u^{\prime}+2h(u^{\prime})^{\prime}=0.$$ If $h$ is not identically a constant,  we derive that $$-\frac {h^{\prime\prime}}{h^{\prime}}=2\frac {u^{\prime\prime}}{u^{\prime}}+\frac 2r,$$ which implies that \begin{eqnarray*}  h^{\prime}(r)\sim r^{-2}(u^{\prime})^2&&\text{as}\,\,r\to \infty. \end{eqnarray*} Recall that   $Q=Q(|x|)$, $x\in \R^3$, is the unique positive radial solution of Eq. (\ref{eq: Kwong}). It is well known  that $\lim_{r\to \infty}re^rQ^{\prime}(r)= -C$ holds  for some constant $C>0$. Hence, by the proof of the uniqueness part of  Theorem \ref{prop: LPX-2016}, we know that $\lim_{r\to \infty}re^{r/\sqrt{c}}u^{\prime}(r)=-C_1$ for some $C_1>0$. Combining this fact with the above estimates gives $$|h(r)u^{\prime}(r)|\ge Cr^{-1}e^{r/\sqrt{c}}$$ as $r\to \infty$. Thus $hu^{\prime}$ does not belong to $H^1(\R_+,r^2\D r)$ unless $h$ is a constant. This shows that the family of solutions of  Eq. (\ref{eq: second case}) in  $H^1(\R_+,r^2\D r)$ is given by $hu^{\prime}$, for some constant $h$. In particular, we conclude that $\var_{1l}=d_{l}u^{\prime}$ hold for some constant $d_{l}$, for all $1\le l\le 3$.

For the last Eq.  (\ref{eq: third case}),   we show that it has only a trivial solution. Indeed, for  $k\ge2$, we have \[A_k=A_1+\frac {\de_k}{r^2},\] where $\de_k=\la_k-\la_1$. Since $\la_{k}>\la_{1}$, we find that $\de_k >0$.  Notice that $u^{\prime}$ is an eigenfunction of $A_1$ corresponding to the eigenvalue 0, and that $u^{\prime}$ is of constant sign. By virtue of orthogonality, we can easily infer that $0$ is the smallest eigenvalue of $A_1$. That is, $A_1$ is a nonnegative operator. Therefore, $\de_k>0$ implies that $A_k$ is a  positive operator for all $k\ge 2$. That is, $\langle A_k\psi,\psi \rangle \ge 0$ for all $\psi \in H^1(\R_+,r^2 \D r)$, and the equality attains if and only if $\psi =0$. As a result, we easily prove that if $\var_{kl}$ is a solution of Eq. (\ref{eq: third case}), then $\var_{kl}\equiv0$ holds for all $k\ge 2$.

In summary, we obtain \[ \var=\sum_{l=1}^{3}d_{l}u^{\prime}(r)Y_{1l}=\sum_{l=1}^{3}d_{l}\pa_{x_{l}}u. \] This finishes the proof. \end{proof}

\section{Some preliminaries\label{sec: Some-preliminaries}}

In this section, we explain the strategy of the proof of Theorem \ref{thm: main result-existence}
and present some elementary estimates for later use.

First, denote by $U$ the unique positive radial solution of Eq. (\ref{eq: limiting equ.})
in $H^{1}(\R^{3})$ in the rest of the paper. A useful fact that will
be frequently used is the exponential decay of $U$ of and its derivatives.
That is,
\begin{eqnarray}
U(x)+|\na U(x)|\le Ce^{-\si_{0}|x|}, &  & x\in\R^{3}\label{eq: Properties of U}
\end{eqnarray}
for some $\si_{0}>0$ and $C>0$ (see Proposition \ref{prop: Existence}).

Next, to find solutions for Eq. (\ref{eq: Kirchhoff}) in the form
$U_{\ep,y}+\var$, we introduce a new functional $J_{\ep}:\R^{3}\times H_{\ep}\to\R$
defined by
\begin{eqnarray*}
J_{\ep}(y,\var)=I_{\ep}(U_{\ep,y}+\var), &  & \var\in H_{\ep}.
\end{eqnarray*}
See (\ref{eq: variational functional}) for the definition of $I_{\ep}$.
Following the scheme of Cao and Peng \cite{Cao-Peng-2009}, we divide
the proof of Theorem \ref{thm: main result-existence} into two steps:

\textbf{Step 1:} for each $\ep,\de$ sufficiently small and for each
$y\in B_{\de}(0)$, we will find a critical point $\var_{\ep,y}$
for $J_{\ep}(y,\cdot)$ (the function $y\mapsto\var_{\ep,y}$ also
belongs to the class $C^{1}(H_{\ep})$); then

\textbf{Step 2:} for each $\ep$,$\de$ sufficiently small, we will
find a critical point $y_{\ep}$ for the function $j_{\ep}:B_{\de}(0)\to\R$
induced by
\begin{equation}
y\mapsto j_{\ep}(y)\equiv J(y,\var_{\ep,y}).\label{eq: reduction function}
\end{equation}
That is, we will find a critical point $y_{\ep}$ in the interior
of $B_{\de}(0)$.

It is standard to verify that $(y_{\ep},\var_{\ep,y_{\ep}})$ is a
critical point of $J_{\ep}$ for $\ep$ sufficiently small by the
chain rule. This gives a solution $u_{\ep}\equiv U_{\ep,y_{\ep}}+\var_{\ep,y_{\ep}}$
to Eq. (\ref{eq: Kirchhoff}) for $\ep$ sufficiently small in virtue
of the following lemma.

\begin{lemma}\label{lem: reduction} There exist $\ep_{0}>0$, $\de_{0}>0$
satisfying the following property: for any $\ep\in(0,\ep_{0})$ and
$\de\in(0,\de_{0})$, $y\in B_{\de}(0)$ is a critical point of the
function $j_{\ep}$ define as in (\ref{eq: reduction function}) if
and only if
\[
u_{\ep}\equiv U_{\ep,y}+\var_{\ep,y}
\]
 is a critical point of $I_{\ep}$.\end{lemma}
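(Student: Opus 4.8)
The assertion is a standard Lyapunov–Schmidt reduction statement: the finite-dimensional reduced functional $j_\ep$ captures all the critical points of $I_\ep$ of the prescribed form. The proof I would give is the usual "chain rule in both directions" argument, built on the defining property of $\var_{\ep,y}$ from Step~1. Recall that, by construction, $\var_{\ep,y}$ is a critical point of $J_\ep(y,\cdot)$ constrained to the orthogonal complement (in $\langle\cdot,\cdot\rangle_\ep$) of the approximate kernel $E_{\ep,y}=\operatorname{span}\{\pa_{x_i}U_{\ep,y}: i=1,2,3\}$; equivalently, there exist Lagrange multipliers $c_{\ep,y}^{(1)},c_{\ep,y}^{(2)},c_{\ep,y}^{(3)}\in\R$ such that
\[
\pa_\var J_\ep(y,\var_{\ep,y})=\sum_{i=1}^{3}c_{\ep,y}^{(i)}\,\langle \pa_{x_i}U_{\ep,y},\cdot\rangle_\ep ,
\qquad \var_{\ep,y}\in E_{\ep,y}^{\perp}.
\]
I would first record this, together with the $C^1$-dependence $y\mapsto\var_{\ep,y}$ (already asserted in Step~1), which makes $j_\ep(y)=J_\ep(y,\var_{\ep,y})$ a $C^1$ function on $B_\de(0)$.

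\textbf{The "if" direction.} Suppose $y\in B_\de(0)$ is a critical point of $j_\ep$. By the chain rule,
\[
0=\pa_{y_k} j_\ep(y)=\pa_{y_k}J_\ep(y,\var_{\ep,y})+\pa_\var J_\ep(y,\var_{\ep,y})\big[\pa_{y_k}\var_{\ep,y}\big],
\qquad k=1,2,3.
\]
Using the multiplier identity above, the last term equals $\sum_i c_{\ep,y}^{(i)}\langle \pa_{x_i}U_{\ep,y},\pa_{y_k}\var_{\ep,y}\rangle_\ep$. Since $\var_{\ep,y}\in E_{\ep,y}^\perp$ identically in $y$, differentiating $\langle \pa_{x_i}U_{\ep,y},\var_{\ep,y}\rangle_\ep=0$ in $y_k$ shows $\langle \pa_{x_i}U_{\ep,y},\pa_{y_k}\var_{\ep,y}\rangle_\ep=-\langle \pa_{y_k}\pa_{x_i}U_{\ep,y},\var_{\ep,y}\rangle_\ep$, which is $o(1)$ (in the relevant scaled norm) because $\|\var_{\ep,y}\|_\ep$ is small by Step~1. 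On the other hand, $\pa_{y_k}J_\ep(y,\var_{\ep,y})=\pa_\var J_\ep(y,\var_{\ep,y})[\pa_{y_k}U_{\ep,y}]=\sum_i c_{\ep,y}^{(i)}\langle \pa_{x_i}U_{\ep,y},\pa_{y_k}U_{\ep,y}\rangle_\ep$ (note $\pa_{y_k}U_{\ep,y}=-\pa_{x_k}U_{\ep,y}$). Hence the three equations $\pa_{y_k}j_\ep(y)=0$ become a linear system $M_{\ep,y}\,c_{\ep,y}=o(1)\cdot c_{\ep,y}$, where $M_{\ep,y}$ has entries $-\langle \pa_{x_i}U_{\ep,y},\pa_{x_k}U_{\ep,y}\rangle_\ep$. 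A scaling computation (substitute $x=y+\ep z$) gives $\langle \pa_{x_i}U_{\ep,y},\pa_{x_k}U_{\ep,y}\rangle_\ep = \ep\,\big(a\|\pa_{z_i}U\|_{L^2}^2+\ldots\big)\delta_{ik}+o(\ep)$, so $M_{\ep,y}$ is, after dividing by $\ep$, uniformly invertible. Therefore $c_{\ep,y}^{(i)}=0$ for all $i$ once $\ep,\de$ are small, whence $\pa_\var J_\ep(y,\var_{\ep,y})=0$. Together with the (automatic) identity $\pa_y I_\ep(U_{\ep,y}+\var_{\ep,y})=\pa_{y}J_\ep=\pa_\var J_\ep[\pa_y U_{\ep,y}]=0$, and $\pa_\var I_\ep(U_{\ep,y}+\var_{\ep,y})=\pa_\var J_\ep(y,\var_{\ep,y})=0$, this shows $u_\ep=U_{\ep,y}+\var_{\ep,y}$ is a critical point of $I_\ep$.

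\textbf{The "only if" direction, and the main obstacle.} Conversely, if $u_\ep=U_{\ep,y}+\var_{\ep,y}$ is a critical point of $I_\ep$, then in particular $\pa_\var J_\ep(y,\var_{\ep,y})=0$, so all multipliers $c_{\ep,y}^{(i)}$ vanish; feeding this back into the chain-rule expansion for $\pa_{y_k}j_\ep(y)$ above makes every term vanish, so $y$ is a critical point of $j_\ep$. The only genuine work is the invertibility of $M_{\ep,y}/\ep$ uniformly in $y\in B_\de(0)$ and small $\ep$, i.e. the nondegeneracy of the approximate kernel — and this is precisely where Theorem~\ref{prop: LPX-2016} (nondegeneracy of $U$) enters, guaranteeing that the three translation directions $\pa_{x_i}U$ are linearly independent and stay so after the $\ep$-rescaling; the estimate $\langle\pa_{x_i}U_{\ep,y},\pa_{x_k}U_{\ep,y}\rangle_\ep=\ep\, a\,\|\na U\|_{L^2}^2\,\delta_{ik}/3+o(\ep)$ follows from the exponential decay \eqref{eq: Properties of U} and the radial symmetry of $U$. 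I would also need to confirm that the maps $y\mapsto c_{\ep,y}^{(i)}$ and $y\mapsto\var_{\ep,y}$ are continuous (indeed $C^1$), which is part of the output of Step~1's implicit function theorem argument; given that, the above is routine. The one point to be careful about is keeping track of the powers of $\ep$ so that the "error" terms genuinely dominate nothing — but since both the diagonal of $M_{\ep,y}$ and its perturbation are controlled in the same $\ep$-scale with the perturbation being a higher power (via $\|\var_{\ep,y}\|_\ep=o(\ep^{3/2})$), this causes no real difficulty.
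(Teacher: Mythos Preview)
Your argument is correct and is precisely the standard Lyapunov--Schmidt computation the paper has in mind: the paper does not actually prove this lemma but defers to Bartsch--Peng \cite{Bartsch-Peng-2007} and Cao--Peng \cite{Cao-Peng-2009}, whose arguments are exactly the chain-rule-plus-invertible-Gram-matrix reasoning you wrote out. Two small remarks: (i) your use of $E_{\ep,y}$ for the span of $\{\pa_{x_i}U_{\ep,y}\}$ is the opposite of the paper's convention, where $E_{\ep,y}$ denotes the orthogonal complement; (ii) the invertibility of $M_{\ep,y}$ only requires the linear independence and approximate $\langle\cdot,\cdot\rangle_\ep$-orthogonality of the $\pa_{x_i}U_{\ep,y}$ (a direct consequence of radial symmetry and decay), not the full nondegeneracy statement of Theorem~\ref{prop: LPX-2016} --- that theorem is really used earlier, in Proposition~\ref{prop: bounded inverse operator}, to make Step~1 work.
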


Lemma \ref{lem: reduction} can be proved in a standard way by using
the arguments as that of Bartsch and Peng \cite{Bartsch-Peng-2007}
(see also Cao and Peng \cite{Cao-Peng-2009}). We leave the details
for the interested readers.

To realize \textbf{Step 1}, expand $J_{\ep}(y,\cdot)$ near $\var=0$
for each fixed $y$:
\[
J_{\ep}(y,\var)=J_{\ep}(y,0)+l_{\ep}(\var)+\frac{1}{2}\langle\L_{\ep}\var,\var\rangle+R_{\ep}(\var),
\]
where $J_{\ep}(y,0)=I_{\ep}(U_{\ep,y})$, and $l_{\ep}$, $\L_{\ep}$
and $R_{\ep}$ are defined as follows: for $\var,\psi\in H_{\ep}$,
define

\begin{equation}
\begin{aligned}l_{\ep}(\var) & =\langle I_{\ep}^{\prime}(U_{\ep,y}),\var\rangle\\
 & =\langle U_{\ep,y},\var\rangle_{\ep}+\ep b\left(\int_{\R^{3}}|\na U_{\ep,y}|^{2}\right)\int_{\R^{3}}\na U_{\ep,y}\cdot\na\var-\int_{\R^{3}}U_{\ep,y}^{p}\var,
\end{aligned}
\label{eq: def of l-epsilon}
\end{equation}
and $\L_{\ep}:L^{2}(\R^{3})\to L^{2}(\R^{3})$ is the bilinear form
around $U_{\ep,y}$ defined by
\begin{equation}
\begin{aligned}\langle\L_{\ep}\var,\psi\rangle & =\langle I_{\ep}^{\prime\prime}(U_{\ep,y})[\var],\psi\rangle\\
 & =\langle\var,\psi\rangle_{\ep}+\ep b\left(\int_{\R^{3}}|\na U_{\ep,y}|^{2}\right)\int_{\R^{3}}\na\var\cdot\na\psi\\
 & \quad+2\ep b\left(\int_{\R^{3}}\na U_{\ep,y}\cdot\na\var\right)\left(\int_{\R^{3}}\na U_{\ep,y}\cdot\na\psi\right)-p\int_{\R^{3}}U_{\ep,y}^{p-1}\var\psi,
\end{aligned}
\label{eq: def of biliear L-epsilon}
\end{equation}
and $R_{\ep}$ denotes the second order reminder term given by
\begin{equation}
R_{\ep}(\var)=J_{\ep}(y,\var)-J_{\ep}(y,0)-l_{\ep}(\var)-\frac{1}{2}\langle\L_{\ep}\var,\var\rangle.\label{eq: 2rd order reminder term}
\end{equation}
We remark that $R_{\ep}$ belongs to $C^{2}(H_{\ep})$ since so is
every term in the right hand side of (\ref{eq: 2rd order reminder term}).

In the rest of this section, we consider $l_{\ep}:H_{\ep}\to\R$ and
$R_{\ep}:H_{\ep}\to\R$ and give some elementary estimates.

We will repeatedly use the following type of Sobolev inequality: for
any $2\le q\le6$ there exists a constant $C>0$ depending only on
$n,V$, $a$ and $q$, but independent of $\ep$, such that
\begin{equation}
\|\var\|_{L^{q}(\R^{3})}\le C\ep^{\frac{3}{q}-\frac{3}{2}}\|\var\|_{\ep}\label{eq: epsilon-Sobolev inequality}
\end{equation}
holds for all $\var\in H_{\ep}$. The proof of (\ref{eq: epsilon-Sobolev inequality})
follows from an elementary scaling argument and the Sobolev embedding
theorems. Indeed, by setting $\tilde{\var}(x)=\var(\ep x)$ and using
Sobolev inequality, we deduce
\[
\begin{aligned}\int_{\R^{3}}|\var|^{q} & =\ep^{3}\int_{\R^{3}}|\tilde{\var}|^{q}\\
 & \le C_{1}\ep^{3}\left(\int_{\R^{3}}\left(|\na\tilde{\var}|^{2}+|\tilde{\var}|^{2}\right)\right)^{q/2}\\
 & =C_{1}\ep^{3-\frac{3q}{2}}\left(\int_{\R^{3}}\left(\ep^{2}|\na\var|^{2}+|\var|^{2}\right)\right)^{q/2}\\
 & \le C_{2}\ep^{3-\frac{3q}{2}}\|\var\|_{\ep}^{q},
\end{aligned}
\]
where $C_{1}$ is the best constant for the Sobolev embedding $H^{1}(\R^{3})\subset L^{q}(\R^{3})$,
and $C_{2}>0$ depends only on $n$, $a,q$ and $V$. We also used
the assumption $\inf_{\R^{3}}V>0$ here.

\begin{lemma}\label{lem: estimate for the first order} Assume that
$V$ satisfies (V1) (V2). Then, there exists a constant $C>0$, independent
of $\ep$, such that for any $y\in B_{1}(0)$, there holds
\[
|l_{\ep}(\var)|\le C\ep^{\frac{3}{2}}\left(\ep^{\al}+\left(V(y)-V(0)\right)\right)\|\var\|_{\ep}
\]
for $\var\in H_{\ep}$. Here $\al$ denotes the order of the H\"older
continuity of $V$ in $B_{10}(0)$.\end{lemma}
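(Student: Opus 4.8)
The plan is to estimate $l_{\ep}(\var)$ by exploiting the fact that $U$ solves the limiting equation \eqref{eq: limiting equ.}, so that the ``main'' part of $l_{\ep}$ cancels and only error terms coming from the potential $V$ and from the nonlocal coefficient survive. Recall from \eqref{eq: def of l-epsilon} that
\[
l_{\ep}(\var)=\langle U_{\ep,y},\var\rangle_{\ep}+\ep b\Big(\int_{\R^{3}}|\na U_{\ep,y}|^{2}\Big)\int_{\R^{3}}\na U_{\ep,y}\cdot\na\var-\int_{\R^{3}}U_{\ep,y}^{p}\var .
\]
Writing out the inner product $\langle\cdot,\cdot\rangle_{\ep}$ and integrating the $\ep^{2}a\,\na U_{\ep,y}\cdot\na\var$ term by parts, I would rewrite this as
\[
l_{\ep}(\var)=\int_{\R^{3}}\Big[-\Big(\ep^{2}a+\ep b\!\int_{\R^{3}}|\na U_{\ep,y}|^{2}\Big)\De U_{\ep,y}+V(x)U_{\ep,y}-U_{\ep,y}^{p}\Big]\var .
\]
Now a change of variables $x=y+\ep z$ turns $U_{\ep,y}$ into $U$: one checks $\int_{\R^{3}}|\na U_{\ep,y}|^{2}=\ep\int_{\R^{3}}|\na U|^{2}$, so the coefficient becomes exactly $\ep^{2}(a+b\int|\na U|^{2})$, and $-\ep^{2}(a+b\int|\na U|^{2})\De U(z)=-U(z)+U(z)^{p}$ by \eqref{eq: limiting equ.}. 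Hence inside the bracket the $-U_{\ep,y}^{p}$ and the Laplacian term collapse to $-U((x-y)/\ep)$, and we are left with
\[
l_{\ep}(\var)=\int_{\R^{3}}\big(V(x)-1\big)\,U_{\ep,y}(x)\,\var(x)\,\D x ,
\]
using $V(0)=1$. (Here the only subtlety is that $-\De U_{\ep,y}(x)=\ep^{-2}(-\De U)((x-y)/\ep)$, which produces the compensating $\ep^{-2}$; I would carry out this scaling carefully.)

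Next I would split $V(x)-1=(V(x)-V(y))+(V(y)-1)$ and estimate the two resulting integrals. For the second, $\big|(V(y)-V(0))\int U_{\ep,y}\var\big|\le |V(y)-V(0)|\,\|U_{\ep,y}\|_{L^{2}}\|\var\|_{L^{2}}$; since $\|U_{\ep,y}\|_{L^{2}}=\ep^{3/2}\|U\|_{L^{2}}$ and, by the $\ep$-Sobolev inequality \eqref{eq: epsilon-Sobolev inequality} with $q=2$, $\|\var\|_{L^{2}}\le C\|\var\|_{\ep}$, this term is $\le C\ep^{3/2}(V(y)-V(0))\|\var\|_{\ep}$, as required. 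For the first integral I would use the H\"older continuity of $V$ on $B_{10}(0)$: since $y\in B_{1}(0)$ and $U$ decays exponentially, the relevant mass of $U_{\ep,y}$ is concentrated in $|x-y|\lesssim \ep\log(1/\ep)$, well inside $B_{10}(0)$ for $\ep$ small, so $|V(x)-V(y)|\le C|x-y|^{\al}=C\ep^{\al}|z|^{\al}$ after scaling. Thus
\[
\Big|\int_{\R^{3}}(V(x)-V(y))U_{\ep,y}\var\Big|\le C\ep^{\al}\Big(\int_{\R^{3}}|z|^{2\al}U(z)^{2}\,\ep^{3}\D z\Big)^{1/2}\|\var\|_{L^{2}}\le C\ep^{\al+3/2}\|\var\|_{\ep},
\]
where $\int|z|^{2\al}U(z)^{2}\D z<\infty$ by the exponential decay \eqref{eq: Properties of U}. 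Adding the two estimates gives the claimed bound.

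The genuinely delicate point — the one I'd be most careful about — is the scaling/cancellation in the first paragraph: making sure the nonlocal coefficient $\ep b\int|\na U_{\ep,y}|^{2}$ scales to exactly the constant appearing in \eqref{eq: limiting equ.} (this is where the observation that $\int|\na u|^{2}$ is the same for every positive solution, i.e.\ $c=a+b\int|\na U|^{2}$, is implicitly being used), and that the $\ep^{-2}$ from $\De U_{\ep,y}$ is correctly bookkept so that $l_{\ep}$ really reduces to $\int(V-1)U_{\ep,y}\var$ with no leftover $\ep$-powers. Everything after that is a routine H\"older/scaling estimate; a minor point to double-check is that $B_{r_0}(x_0)=B_{10}(0)$ comfortably contains the support-in-effect of $U_{\ep,y}$ for all $y\in B_1(0)$ and all small $\ep$, which follows from \eqref{eq: Properties of U} and the normalization $r_0=10$ in (V2).
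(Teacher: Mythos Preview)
Your proof is correct and follows essentially the same route as the paper: reduce $l_{\ep}(\var)$ to $\int_{\R^3}(V(x)-V(0))U_{\ep,y}\var$ via the limiting equation \eqref{eq: limiting equ.} and scaling, then split $V(x)-V(0)=(V(x)-V(y))+(V(y)-V(0))$ and estimate each piece by Cauchy--Schwarz and \eqref{eq: epsilon-Sobolev inequality}. The only point to tighten is that the H\"older bound $|V(x)-V(y)|\le C|x-y|^{\al}$ is available only for $x\in \bar B_{10}(0)$, so your displayed inequality over all of $\R^{3}$ is not literally justified; the paper handles this by further splitting the first integral as $\int_{B_{1}(y)}+\int_{\R^{3}\setminus B_{1}(y)}$, applying (V2) on the inner piece (note $B_{1}(y)\subset B_{10}(0)$ for $y\in B_{1}(0)$) and using boundedness of $V$ together with the exponential decay \eqref{eq: Properties of U} on the tail, rather than arguing informally via concentration of mass.
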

\begin{proof}
Since $U$ solves Eq. (\ref{eq: limiting equ.}), we deduce from the
definition (\ref{eq: def of l-epsilon}) of $l_{\ep}$ that
\[
\begin{aligned}l_{\ep}(\var) & =\int_{\R^{3}}(V(x)-V(0))U_{\ep,y}\var\\
 & =\int_{\R^{3}}(V(x)-V(y))U_{\ep,y}\var+(V(y)-V(0))\int_{\R^{3}}U_{\ep,y}\var\\
 & =:l_{1}+l_{2}.
\end{aligned}
\]

To estimate $l_{1}$, we split $l_{1}$ into two parts:
\[
l_{1}=\int_{B_{1}(y)}(V(x)-V(y))U_{\ep,y}\var+\int_{\R^{3}\backslash B_{1}(y)}(V(x)-V(y))U_{\ep,y}\var=:l_{11}+l_{12}.
\]
Combining the assumption (V2) and $y\in B_{1}(0)$, the exponential
decay of $U$ at infinity and (\ref{eq: epsilon-Sobolev inequality}),
we easily derive
\[
|l_{11}|\le C\ep^{\frac{3}{2}+\al}\|\var\|_{\ep}.
\]
By using the boundedness assumption (V1), (\ref{eq: Properties of U}),
applying H\"older's inequality and (\ref{eq: epsilon-Sobolev inequality}),
it yields
\[
|l_{12}|\le C\ep^{\frac{3}{2}+\al}\|\var\|_{\ep}.
\]
Therefore,
\[
|l_{1}|\le|l_{11}|+|l_{12}|\le C\ep^{\frac{3}{2}+\al}\|\var\|_{\ep}.
\]

To estimate $l_{2}$, we use a simple scaling argument and (\ref{eq: epsilon-Sobolev inequality})
to get
\[
|l_{2}|\le C(V(y)-V(0))\ep^{\frac{3}{2}}\|\var\|_{\ep}.
\]
The proof is complete by combining the estimates of $l_{1}$ and $l_{2}$.
\end{proof}
Next we give estimates for $R_{\ep}$ (see (\ref{eq: 2rd order reminder term}))
and its derivatives $R_{\ep}^{(i)}$ for $i=1,2$.

\begin{lemma} \label{lem: error estimates} There exists a constant
$C>0$, independent of $\ep$ and $b$, such that for $i\in\{0,1,2\}$,
there hold
\[
\|R_{\ep}^{(i)}(\var)\|\le C\ep^{-\frac{3(p-1)}{2}}\|\var\|_{\ep}^{p+1-i}+C(b+1)\ep^{-\frac{3}{2}}\left(1+\ep^{-\frac{3}{2}}\|\var\|_{\ep}\right)\|\var\|_{\ep}^{3-i}
\]
 for all $\var\in H_{\ep}$.\end{lemma}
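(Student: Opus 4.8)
The plan is to split $R_\ep$ according to the three terms of $I_\ep$ in (\ref{eq: variational functional}) and to treat the nonlocal (Kirchhoff) part and the pure power part separately. Writing $u=U_{\ep,y}+\var$, the quadratic piece $\frac12\|u\|_\ep^2$ expands \emph{exactly} into its zeroth, first and second order parts, which are absorbed into $J_\ep(y,0)$, $l_\ep$ and $\frac12\langle\L_\ep\var,\var\rangle$ respectively (cf. (\ref{eq: def of l-epsilon})--(\ref{eq: def of biliear L-epsilon})), hence it contributes nothing to $R_\ep$. Therefore, from (\ref{eq: 2rd order reminder term}),
\[
R_\ep(\var)=R_\ep^{K}(\var)+R_\ep^{N}(\var),
\]
where, expanding $\int|\na u|^2=\int|\na U_{\ep,y}|^2+2\int\na U_{\ep,y}\cdot\na\var+\int|\na\var|^2$ and retaining only the cubic and quartic terms in $\var$,
\[
R_\ep^{K}(\var)=\ep b\left(\int_{\R^3}\na U_{\ep,y}\cdot\na\var\right)\left(\int_{\R^3}|\na\var|^2\right)+\frac{\ep b}{4}\left(\int_{\R^3}|\na\var|^2\right)^2,
\]
and $R_\ep^{N}(\var)=-\frac1{p+1}\int_{\R^3}\big[(U_{\ep,y}+\var)_+^{p+1}-U_{\ep,y}^{p+1}-(p+1)U_{\ep,y}^p\var-\tfrac{p(p+1)}2U_{\ep,y}^{p-1}\var^2\big]$, using that $U_{\ep,y}\ge0$.

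For $R_\ep^{K}$: this is a genuine polynomial in $\var$, so for $i\in\{0,1,2\}$ the $i$-th derivative $R_\ep^{K,(i)}(\var)$ is a sum of a bounded number of terms, each equal to $\ep b$ times at most one factor of the form $\int\na U_{\ep,y}\cdot\na(\cdot)$ and the remaining factors of the form $\int\na(\cdot)\cdot\na(\cdot)$, the total degree in $\var$ and the test functions being $3-i$. I would estimate each factor by Cauchy--Schwarz together with the scaling identities $\|\na U_{\ep,y}\|_{L^2(\R^3)}=\ep^{1/2}\|\na U\|_{L^2(\R^3)}$ and $\|\na\psi\|_{L^2(\R^3)}\le(a\ep^2)^{-1/2}\|\psi\|_\ep$ for $\psi\in H_\ep$: a factor of the first type costs $C\ep^{-1/2}$ per unit of test-function norm, a factor of the second type costs $C\ep^{-2}$. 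Counting powers of $\ep$ then shows every term is bounded by $Cb\ep^{-3/2}\|\var\|_\ep^{3-i}$ or by $Cb\ep^{-3}\|\var\|_\ep^{4-i}$ times the product of the test-function norms, which is precisely the $b$-multiple of the second group on the right-hand side of the claimed inequality.

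For $R_\ep^{N}$: I would use the elementary pointwise Taylor estimates for $g(t)=\frac1{p+1}t_+^{p+1}$. For $1<p\le2$, for all $s\in\R$ and $t\ge0$,
\[
\big|g(t+s)-g(t)-g'(t)s-\tfrac12 g''(t)s^2\big|\le C|s|^{p+1},
\]
together with $|g'(t+s)-g'(t)-g''(t)s|\le C|s|^{p}$ and $|g''(t+s)-g''(t)|\le C|s|^{p-1}$; while for $2<p<5$, $g\in C^3$ and one gets instead $C(t^{p-2}|s|^3+|s|^{p+1})$, $C(t^{p-2}|s|^2+|s|^p)$, $C(t^{p-2}|s|+|s|^{p-1})$ for the three quantities. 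Since $U_{\ep,y}$ is uniformly bounded (Proposition \ref{prop: Existence}), the factors $U_{\ep,y}^{p-2}$ are harmless, so after differentiating $R_\ep^{N}$ under the integral $i$ times and applying these inequalities with $t=U_{\ep,y}$, $s=\var$, the integrand is controlled by $C|\var|^{p+1-i}|\psi_1\cdots\psi_i|$, plus (when $p>2$) a lower-order term $C|\var|^{3-i}|\psi_1\cdots\psi_i|$. I then bound $\int|\var|^{p+1-i}\prod_{j\le i}|\psi_j|$ by H\"older with exponents summing to $p+1$, and $\int|\var|^{3-i}\prod_{j\le i}|\psi_j|$ by H\"older with all exponents equal to $3$, and invoke (\ref{eq: epsilon-Sobolev inequality}) with $q=p+1$ and $q=3$: a short computation gives $\int|\var|^{p+1}\le C\ep^{-3(p-1)/2}\|\var\|_\ep^{p+1}$ and $\int|\var|^3\le C\ep^{-3/2}\|\var\|_\ep^3$. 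Dividing by the test-function norms yields exactly the first group $C\ep^{-3(p-1)/2}\|\var\|_\ep^{p+1-i}$ and a term $C\ep^{-3/2}\|\var\|_\ep^{3-i}$ absorbed in the second group; adding the Kirchhoff and power contributions and noting that $R_\ep^{K}$ has no $b$-free part produces the factor $C(b+1)$.

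The main obstacle is the bookkeeping rather than any conceptual difficulty: one must split the ranges $1<p\le2$ and $2<p<5$ in the pointwise Taylor inequalities, check that $g$ is indeed $C^3$ when $p>2$ so that the cubic remainder is available, and — for each of $i=0,1,2$ — verify that every term produced (the Kirchhoff polynomial terms, and the power-nonlinearity terms including the ``large-$\var$'' terms $|\var|^{p+1-i}$) fits into one of the two groups on the right-hand side with the claimed powers of $\ep$ and $\|\var\|_\ep$, and with a constant independent of $\ep$ and $b$.
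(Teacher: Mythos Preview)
Your proposal is correct and follows essentially the same route as the paper's proof in Appendix~B: the paper writes $R_\ep(\var)=A_1(\var)-A_2(\var)$ with $A_1=R_\ep^K$ and $A_2=-R_\ep^N$, estimates the Kirchhoff polynomial $A_1^{(i)}$ via the same two scaling facts $\|\na U_{\ep,y}\|_{L^2}=C\ep^{1/2}$ and $\|\na\psi\|_{L^2}\le C\ep^{-1}\|\psi\|_\ep$, and handles $A_2^{(i)}$ by the same pointwise Taylor inequalities (split at $p=2$) followed by H\"older and (\ref{eq: epsilon-Sobolev inequality}). The only cosmetic difference is that for $p>2$ the paper keeps the factor $|U_{\ep,y}|^{p-2}$ inside the H\"older step rather than bounding it by $\|U\|_\infty^{p-2}$ as you do; both lead to the same $\ep^{-3/2}\|\var\|_\ep^{3-i}$ contribution.
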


The proof of Lemma \ref{lem: error estimates} is elementary but long.
We leave it in the Appendix B.

\section{The existence of semiclassical solutions\label{sec: proof of existence}}

In this section we prove Theorem \ref{thm: main result-existence}.

\subsection{Finite dimensional reduction}

In this subsection we complete Step 1 as mentioned in Section \ref{sec: Some-preliminaries}.
First we consider the operator $\L_{\ep}$ defined as in (\ref{eq: def of biliear L-epsilon}).
That is,
\[
\begin{aligned}\langle\L_{\ep}\var,\psi\rangle & =\langle\var,\psi\rangle_{\ep}+\ep b\left(\int_{\R^{3}}|\na U_{\ep,y}|^{2}\right)\int_{\R^{3}}\na\var\cdot\na\psi\\
 & \quad+2\ep b\left(\int_{\R^{3}}\na U_{\ep,y}\cdot\na\var\right)\left(\int_{\R^{3}}\na U_{\ep,y}\cdot\na\psi\right)-p\int_{\R^{3}}U_{\ep,y}^{p-1}\var\psi,
\end{aligned}
\]
 for $\var,\psi\in H_{\ep}$. Define
\[
E_{\ep,y}=\left\{ u\in H_{\ep}:\left\langle u,\pa_{y_{i}}U_{\ep,y}\right\rangle _{\ep}=0\text{ for }i=1,2,3\right\} .
\]
When no confuse occurs, we suppress $y$ in the notation $E_{\ep,y}$.
Note that $E_{\ep,y}$ is a closed subspace of $H_{\ep}$ for every
$\ep>0$ and $y\in\R^{3}$. The following result shows that $\L_{\ep}$
is invertible when restricted on $E_{\ep,y}$.

\begin{proposition}\label{prop: bounded inverse operator} There
exist $\ep_{1}>0$, $\de_{1}>0$ and $\rho>0$ sufficiently small,
such that for every $\ep\in(0,\ep_{1})$, $\de\in(0,\de_{1})$, there
holds
\begin{eqnarray*}
\|\L_{\ep}\var\|_{\ep}\ge\rho\|\var\|_{\ep}, &  & \forall\:\var\in E_{\ep,y}
\end{eqnarray*}
uniformly with respect to $y\in B_{\de}(0)$.\end{proposition}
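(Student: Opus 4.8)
The plan is to argue by contradiction and exploit the fact that $\L_\ep$ is a small perturbation, in a suitable sense, of the scaled linearized operator of the limiting equation, whose only kernel directions are the translations $\pa_{x_i}U$ — precisely the directions we have removed by passing to $E_{\ep,y}$. Suppose the conclusion fails. Then there are sequences $\ep_n\to0$, $\de_n\to0$, $y_n\in B_{\de_n}(0)$ and $\var_n\in E_{\ep_n,y_n}$ with $\|\var_n\|_{\ep_n}=1$ and $\|\L_{\ep_n}\var_n\|_{\ep_n}\to0$. The first step is to rescale: set $\tilde\var_n(x)=\var_n(\ep_n x + y_n)$, so that $\|\tilde\var_n\|_{H^1(\R^3)}$ is bounded (uniformly, using $\inf V>0$ and the scaling identity behind \eqref{eq: epsilon-Sobolev inequality}), and passing to a subsequence $\tilde\var_n\wto \tilde\var$ weakly in $H^1(\R^3)$ and strongly in $L^q_{\loc}$ for $2\le q<6$.

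Next I would identify the limiting equation satisfied by $\tilde\var$. Rewriting $\langle\L_{\ep_n}\var_n,\psi\rangle\to0$ after the change of variables, the local terms $\langle\var,\psi\rangle_\ep$ and $p\int U_{\ep,y}^{p-1}\var\psi$ become, in the rescaled picture, $\int(a\na\tilde\var_n\cdot\na\tilde\psi + V(\ep_n x+y_n)\tilde\var_n\tilde\psi)$ and $p\int U^{p-1}\tilde\var_n\tilde\psi$; since $V(\ep_n x + y_n)\to V(x_0)=1$ locally uniformly, these pass to the limit cleanly. The two nonlocal terms require care: $\ep_n b(\int_{\R^3}|\na U_{\ep_n,y_n}|^2)\int\na\var_n\cdot\na\psi$ scales so that the prefactor $\ep_n b\int|\na U_{\ep_n,y_n}|^2 = \ep_n^{3}\cdot\ep_n^{-1} b\|\na U\|_2^2 = b\|\na U\|_2^2\,\ep_n^{2}\cdot\ep_n$... — more carefully, $\int_{\R^3}|\na U_{\ep_n,y_n}|^2 = \ep_n\|\na U\|_2^2$, so the prefactor is $b\ep_n^2\|\na U\|_2^2\to0$; similarly $2\ep_n b(\int\na U_{\ep_n,y_n}\cdot\na\var_n)(\int\na U_{\ep_n,y_n}\cdot\na\psi)$ carries a prefactor of order $\ep_n^2$ as well. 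Hence both nonlocal contributions vanish in the limit, and $\tilde\var$ satisfies $-a\De\tilde\var + \tilde\var - pU^{p-1}\tilde\var = 0$ in $\R^3$, i.e. $\tilde\var\in\Ker{\cal A}_U$ (after the trivial rescaling that matches ${\cal A}_U$; here recall $c=a+b\|\na U\|_2^2\cdot(\text{scaling})$ from \eqref{eq: value of c}, and I will need to reconcile the coefficient $a$ in the rescaled equation with the coefficient $c$ in ${\cal A}_u$ — this is a routine bookkeeping point, not a real difficulty, since $U$ itself is the radial solution with its own built-in coefficient). By Proposition \ref{prop: property of Au}(1), $\tilde\var\in\span\{\pa_{x_1}U,\pa_{x_2}U,\pa_{x_3}U\}$. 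On the other hand, the orthogonality $\langle\var_n,\pa_{y_i}U_{\ep_n,y_n}\rangle_{\ep_n}=0$ passes to the limit (again after rescaling, noting $\pa_{y_i}U_{\ep,y} = -\ep^{-1}(\pa_{x_i}U)((\cdot-y)/\ep)$) to give $\int(a\na\tilde\var\cdot\na\pa_{x_i}U + \tilde\var\,\pa_{x_i}U)=0$ for $i=1,2,3$; since $\pa_{x_i}U\in\Ker{\cal A}_U$, this inner product is (up to the $L^2$ pairing against ${\cal A}_U$ of the relevant function) exactly the condition forcing $\tilde\var\perp\span\{\pa_{x_i}U\}$ in the relevant energy inner product. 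Combining, $\tilde\var\equiv0$.

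Finally I would upgrade weak-to-zero into a contradiction with $\|\var_n\|_{\ep_n}=1$. Test $\langle\L_{\ep_n}\var_n,\var_n\rangle$ against $\var_n$ itself: this equals $\|\var_n\|_{\ep_n}^2 + (\text{nonneg. nonlocal terms}) - p\int U_{\ep_n,y_n}^{p-1}\var_n^2$, and it tends to $0$. After rescaling, $p\int U_{\ep_n,y_n}^{p-1}\var_n^2 = p\ep_n^3\int U^{p-1}\tilde\var_n^2$, while $\|\var_n\|_{\ep_n}^2 \asymp \ep_n^3\|\tilde\var_n\|_{H^1}^2$; dividing through by $\ep_n^3$, we get $\|\tilde\var_n\|_{H^1_a}^2 + o(1) = p\int U^{p-1}\tilde\var_n^2 + o(1)$, where $\|\cdot\|_{H^1_a}$ is the (equivalent) norm with the $a$-weight on the gradient. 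Since $U^{p-1}$ decays exponentially, the right side is controlled by $\int_{B_R}U^{p-1}\tilde\var_n^2 + o_R(1)$; by the strong $L^2_{\loc}$ convergence $\tilde\var_n\to\tilde\var\equiv0$, the right side $\to0$, forcing $\|\tilde\var_n\|_{H^1_a}\to0$, i.e. $\|\var_n\|_{\ep_n}\to0$, contradicting $\|\var_n\|_{\ep_n}=1$.

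The main obstacle I anticipate is not the weak-limit identification — that is standard once the scalings are pinned down — but rather the careful tracking of powers of $\ep$ in the nonlocal quadratic terms of $\L_\ep$ and in the $\ep$-inner products, so as to confirm that (i) the nonlocal terms genuinely disappear in the rescaled limit and contribute no obstruction to the coercivity estimate (they are nonnegative, so they only help), and (ii) the orthogonality conditions defining $E_{\ep,y}$ survive the rescaling in exactly the form needed to conclude $\tilde\var\perp\span\{\pa_{x_i}U\}$. A secondary technical point is handling the region $|x|\gtrsim 1/\ep_n$ (i.e. $|\,\cdot\,|\gtrsim1$ before rescaling) when passing $p\int U_{\ep,y}^{p-1}\var^2$ to the limit, but the exponential decay \eqref{eq: Properties of U} of $U$ makes the tail uniformly small. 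Once these scaling computations are in hand, the contradiction argument closes routinely.
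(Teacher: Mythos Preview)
Your overall strategy matches the paper's, but there is a genuine scaling error in your treatment of the nonlocal terms, and the point you dismiss as ``routine bookkeeping'' is exactly where the argument would break. Under the rescaling $\tilde\var(x)=\var(\ep x+y)$ one has $\int_{\R^3}\na\var\cdot\na\psi=\ep\int_{\R^3}\na\tilde\var\cdot\na\tilde\psi$, $\int_{\R^3}\na U_{\ep,y}\cdot\na\var=\ep\int_{\R^3}\na U\cdot\na\tilde\var$, and $\int_{\R^3}|\na U_{\ep,y}|^2=\ep\int_{\R^3}|\na U|^2$. Hence \emph{every} term in $\langle\L_\ep\var,\psi\rangle$, including the two nonlocal ones, scales as $\ep^3$ times its rescaled counterpart; for instance
\[
\ep b\Big(\int_{\R^3}|\na U_{\ep,y}|^2\Big)\int_{\R^3}\na\var\cdot\na\psi
=\ep^3\, b\Big(\int_{\R^3}|\na U|^2\Big)\int_{\R^3}\na\tilde\var\cdot\na\tilde\psi.
\]
(Equivalently: the prefactor $b\ep_n^2\|\na U\|_2^2$ you computed multiplies $\int\na\var_n\cdot\na\psi$, which is of order $\ep_n^{-2}$ because the $\ep$-norm weights the gradient by $\ep^2$.) After dividing through by $\ep_n^3$ --- or, as the paper does, normalizing $\|\var_n\|_{\ep_n}^2=\ep_n^3$ so that $\tilde\var_n$ is genuinely bounded in $H^1$ (your normalization $\|\var_n\|_{\ep_n}=1$ does \emph{not} give this) --- the nonlocal terms survive with $O(1)$ coefficients, and the limiting equation for $\tilde\var$ is $\L\tilde\var=0$ in the sense of \eqref{eq: L plus}, not $-a\De\tilde\var+\tilde\var-pU^{p-1}\tilde\var=0$.

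This is not cosmetic: since $U$ solves $-c\De U+U=U^p$ with $c=a+b\|\na U\|_2^2>a$, the derivatives $\pa_{x_i}U$ lie in $\Ker{\cal A}_U=\Ker(-c\De+1-pU^{p-1})$ but \emph{not} in the kernel of the operator $-a\De+1-pU^{p-1}$ you wrote down, so from that equation you could not conclude $\tilde\var\in\span\{\pa_{x_i}U\}$. The paper keeps the nonlocal terms and invokes Theorem~\ref{prop: LPX-2016} directly to get $\tilde\var\in\Ker\L=\span\{\pa_{x_1}U,\pa_{x_2}U,\pa_{x_3}U\}$. Once this is in place, your remaining steps --- passing the $E_{\ep,y}$ orthogonality to the limit to force $\tilde\var=0$, then testing $\langle\L_{\ep_n}\var_n,\var_n\rangle$ and using nonnegativity of the nonlocal terms together with the exponential decay of $U^{p-1}$ --- are correct and coincide with the paper's proof.
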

\begin{proof}
We use a contradiction argument. Suppose that there exist $\ep_{n},\de_{n}\to0$,
$y_{n}\in B_{\de_{n}}(0)$ and $\var_{n}\in E_{\ep_{n},y_{n}}$ satisfying
\begin{eqnarray}
\langle\L_{\ep_{n}}\var_{n},g\rangle_{\ep_{n}}\le n^{-1}\|\var_{n}\|_{\ep_{n}}\|g\|_{\ep_{n}}, &  & \forall\:g\in E_{\ep_{n},y_{n}}.\label{eq: contradiction inequa.}
\end{eqnarray}
Since this inequality is homogeneous with respect to $\var_{n}$,
we can assume that
\begin{eqnarray*}
\|\var_{n}\|_{\ep_{n}}^{2}=\ep_{n}^{3} &  & \text{for all }n.
\end{eqnarray*}

Denote $\tilde{\var}_{n}(x)=\var_{n}(\ep_{n}x+y_{n})$. Then
\[
\int_{\R^{3}}\left(a|\na\tilde{\var}_{n}|^{2}+V(\ep_{n}x+y_{n})\tilde{\var}_{n}^{2}\right)=1.
\]
As $V$ is bounded and $\inf_{\R^{3}}V>0$, we infer that $\{\tilde{\var}_{n}\}$
is a bounded sequence in $H^{1}(\R^{3})$. Hence, up to a subsequence,
we may assume that
\begin{eqnarray*}
\tilde{\var}_{n}\wto\var &  & \text{in }H^{1}(\R^{3}),\\
\tilde{\var}_{n}\to\var &  & \text{in }L_{\loc}^{p+1}(\R^{3}),\\
\tilde{\var}_{n}\to\var &  & \text{a.e. in }\R^{3}
\end{eqnarray*}
for some $\var\in H^{1}(\R^{3})$. We will prove that $\var\equiv0$.

First we prove that $\var=\sum_{l=1}^{3}c^{l}\pa_{x_{l}}U$ for some
$c^{l}\in\R$. To this end, let $\tilde{E}_{n}\equiv\left\{ \tilde{g}\in H_{\ep}:\tilde{g}_{\ep_{n},y_{n}}\in E_{\ep_{n},y_{n}}\right\} $,
that is,
\[
\tilde{E}_{n}=\left\{ \tilde{g}\in H_{\ep}:\int_{\R^{3}}\left(a\na\pa_{x_{i}}U\cdot\na\tilde{g}+V(\ep_{n}x+y_{n})\pa_{x_{i}}U\tilde{g}\right)=0\text{ for }i=1,2,3\right\} .
\]
For convenience, denote at the moment
\begin{eqnarray*}
\langle u,v\rangle_{\ast,n}=\int_{\R^{3}}\left(a\na u\cdot\na v+V(\ep_{n}x+y_{n})uv\right) & \text{and} & \|u\|_{\ast,n}^{2}=\langle u,u\rangle_{\ast,n}.
\end{eqnarray*}
Then (\ref{eq: contradiction inequa.}) can be rewritten in terms
of $\tilde{\var}_{n}$ as follows:
\begin{equation}
\begin{aligned} & \quad\langle\tilde{\var}_{n},\tilde{g}\rangle_{\ast,n}+b\int_{\R^{3}}|\na U|^{2}\int_{\R^{3}}\na\tilde{\var}_{n}\cdot\na\tilde{g}\\
 & +2b\int_{\R^{3}}\na U\cdot\na\tilde{\var}_{n}\int_{\R^{3}}\na U\cdot\na\tilde{g}-p\int_{\R^{3}}U^{p-1}\tilde{\var}_{n}\tilde{g}\\
 & \le n^{-1}\|\tilde{g}_{n}\|_{\ast,n},
\end{aligned}
\label{eq: scaled contradiction  inquaiity}
\end{equation}
where $\tilde{g}_{n}(x)=g(\ep_{n}x+y_{n})\in\tilde{E}_{n}$.

Now, for any $g\in C_{0}^{\wq}(\R^{3})$, define $a_{n}^{l}\in\R$
($1\le l\le3$) by
\[
a_{n}^{l}=\langle\pa_{x_{l}}U,g\rangle_{\ast,n}/\|\pa_{x_{l}}U\|_{\ast,n}^{2}
\]
and let $\tilde{g}_{n}=g-\sum_{l=1}^{3}a_{n}^{l}\pa_{x_{l}}U$. Note
that
\[
\|\pa_{x_{l}}U\|_{\ast,n}^{2}\to\int_{\R^{3}}\left(a|\na\pa_{x_{l}}U|^{2}+(\pa_{x_{l}}U)^{2}\right)>0,
\]
and for $l\neq j$,
\[
\langle\pa_{x_{l}}U,\pa_{x_{j}}U\rangle_{\ast,n}=\int_{\R^{3}}V(\ep_{n}x+y_{n})\pa_{x_{l}}U\pa_{x_{j}}U\to\int_{\R^{3}}\pa_{x_{l}}U\pa_{x_{j}}U=0.
\]
Hence the dominated convergence theorem implies that
\[
a_{n}^{l}\to a^{l}\equiv\int_{\R^{3}}\left(a\na\pa_{x_{l}}U\cdot\na g+\pa_{x_{l}}Ug\right)/\int_{\R^{3}}\left(a|\na\pa_{x_{l}}U|^{2}+(\pa_{x_{l}}U)^{2}\right)
\]
and
\[
\langle\pa_{x_{l}}U,\tilde{g}_{n}\rangle_{\ast,n}\to0
\]
as $n\to\wq$. Moreover, we infer that
\[
\|\tilde{g}_{n}\|_{\ast,n}=O(1).
\]
Now substituting $\tilde{g}_{n}$ into (\ref{eq: scaled contradiction  inquaiity})
and letting $n\to\wq$, we find that
\[
\langle\L\var,g\rangle-\sum_{l=1}^{3}a^{l}\langle\L\var,\pa_{x_{l}}U\rangle=0,
\]
where $\L$ is defined as in (\ref{eq: L plus}). Since $U_{x_{l}}\in\text{Ker}\L$
by Theorem \ref{prop: LPX-2016}, we have $\langle\L\var,\pa_{x_{i}}U\rangle=0$.
Thus
\begin{eqnarray*}
\langle\L\var,g\rangle=0, &  & \forall\:g\in C_{0}^{\wq}(\R^{n}).
\end{eqnarray*}
This implies that $\var\in\text{Ker}\L$. Applying Theorem \ref{prop: LPX-2016}
again gives $c^{l}\in\R$ ($1\le l\le3$) such that
\[
\var=\sum_{l=1}^{3}c^{l}\pa_{x_{l}}U.
\]

Next we prove $\var\equiv0$. Note that $\tilde{\var}_{n}\in\tilde{E}_{n}$,
that is,
\[
\int_{\R^{3}}\left(a\na\tilde{\var}_{n}\cdot\na\pa_{x_{l}}U+V(\ep_{n}x+y_{n})\tilde{\var}_{n}\pa_{x_{l}}U\right)=0
\]
for each $l=1,2,3$. By sending $n\to\infty$, we derive

\[
c^{l}\int_{\R^{3}}\left(a|\na\pa_{x_{l}}U|^{2}+(\pa_{x_{l}}U)\right)=0,
\]
which implies $c^{l}=0$. Hence
\begin{eqnarray*}
\var\equiv0 &  & \text{in }\R^{3}.
\end{eqnarray*}

Now we can complete the proof of Proposition \ref{prop: bounded inverse operator}.
We have proved that $\tilde{\var}_{n}\wto0$ in $H^{1}(\R^{3})$ and
$\tilde{\var}_{n}\to0$ in $L_{\loc}^{p+1}(\R^{3})$. As a result
we obtain
\[
\begin{aligned}p\int_{\R^{3}}U_{\ep_{n},y_{n}}^{p-1}\var_{n}^{2} & =p\ep_{n}^{3}\int_{\R^{3}}U^{p-1}\tilde{\var}_{n}^{2}\\
 & =p\ep_{n}^{3}\left(\int_{B_{R}(0)}U^{p-1}\tilde{\var}_{n}^{2}+\int_{\R^{3}\backslash B_{R}(0)}U^{p-1}\tilde{\var}_{n}^{2}\right)\\
 & =p\ep_{n}^{3}\left(o(1)+o_{R}(1)\right),
\end{aligned}
\]
where $o(1)\to0$ as $n\to\wq$ since $\tilde{\var}_{n}\to0$ in $L_{\loc}^{p+1}(\R^{3})$,
and $o_{R}(1)\to0$ as $R\to\wq$ since $\tilde{\var}_{n}\in H^{1}(\R^{3})$
is uniformly bounded. Take $R$ sufficiently large. We get
\[
p\int_{\R^{3}}U_{\ep_{n},y_{n}}^{p-1}\var_{n}^{2}\le\frac{1}{2}\ep_{n}^{3}
\]
for $n$ sufficiently large. However, this implies that
\[
\begin{aligned}\frac{1}{n}\ep_{n}^{3}=\frac{1}{n}\|\var_{n}\|_{\ep_{n}}^{2} & \ge\langle\L_{\ep_{n}}\var_{n},\var_{n}\rangle\\
 & =\|\var_{n}\|_{\ep_{n}}^{2}+b\ep_{n}\int_{\R^{3}}|\na U_{\ep_{n},y_{n}}|^{2}\int_{\R^{3}}|\na\var_{n}|^{2}\\
 & \quad+2b\ep_{n}\left(\int_{\R^{3}}\na U_{\ep_{n},y_{n}}\cdot\na\var_{n}\right)^{2}-p\int_{\R^{3}}U_{\ep_{n},y_{n}}^{p-1}\var_{n}^{2}\\
 & \ge\frac{1}{2}\ep_{n}^{3}.
\end{aligned}
\]
We reach a contradiction. The proof is complete.
\end{proof}
Proposition \ref{prop: bounded inverse operator} implies that by
restricting on $E_{\ep,y}$, the quadratic form $\L_{\ep}:E_{\ep,y}\to E_{\ep,y}$
has a bounded inverse, with $\|\L_{\ep}^{-1}\|\le\rho^{-1}$ uniformly
with respect to $y\in B_{\de}(0)$. This further implies the following
reduction map.

\begin{proposition} \label{prop: reduction map} There exist $\ep_{0}>0$,
$\de_{0}>0$ sufficiently small such that for all $\ep\in(0,\ep_{0})$,
$\de\in(0,\de_{0})$, there exists a $C^{1}$ map $\var_{\ep}:B_{\de}(0)\to H_{\ep}$
with $y\mapsto\var_{\ep,y}\in E_{\ep,y}$ satisfying
\begin{eqnarray*}
\left\langle \frac{\pa J_{\ep}(y,\var_{\ep,y})}{\pa\var},\psi\right\rangle _{\ep}=0, &  & \forall\:\psi\in E_{\ep,y}.
\end{eqnarray*}
Moreover, we can choose $\tau\in(0,\al/2)$ as small as we wish, such
that
\begin{equation}
\|\var_{\ep,y}\|_{\ep}\le\ep^{\frac{3}{2}}\left(\ep^{\al-\tau}+\left(V(y)-V(0)\right)^{1-\tau}\right).\label{eq: a simple estimate}
\end{equation}
\end{proposition}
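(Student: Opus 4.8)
The plan is to solve, for each fixed $y\in B_\de(0)$, the reduced (finite-codimension) equation $\tfrac{\pa J_\ep}{\pa\var}(y,\var)\perp E_{\ep,y}$ by a contraction mapping argument in a small ball of $E_{\ep,y}$, and then to read off the $C^1$ dependence on $y$ from the implicit function theorem. Using the expansion $J_\ep(y,\var)=J_\ep(y,0)+l_\ep(\var)+\tfrac12\langle\L_\ep\var,\var\rangle+R_\ep(\var)$ of Section \ref{sec: Some-preliminaries} (legitimate since $I_\ep\in C^2(H_\ep)$), and identifying $l_\ep$ and $R'_\ep(\var)$ with their Riesz representatives in $H_\ep$, the equation to be solved in $E_{\ep,y}$ becomes
\[
Q_{\ep,y}\bigl(l_\ep+\L_\ep\var+R'_\ep(\var)\bigr)=0,
\]
where $Q_{\ep,y}:H_\ep\to E_{\ep,y}$ is the orthogonal projection. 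By Proposition \ref{prop: bounded inverse operator}, the restriction of $\L_\ep$ to $E_{\ep,y}$ is invertible with inverse of norm at most $\rho^{-1}$, uniformly in $\ep\in(0,\ep_1)$ and $y\in B_\de(0)$; hence the equation is equivalent to the fixed point problem $\var=T_{\ep,y}(\var)$ with $T_{\ep,y}(\var)=-\L_\ep^{-1}Q_{\ep,y}\bigl(l_\ep+R'_\ep(\var)\bigr)$.

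Fix any $\tau\in(0,\al/2)$ and set $r_{\ep,y}=\ep^{3/2}\bigl(\ep^{\al-\tau}+(V(y)-V(0))^{1-\tau}\bigr)$. I would show that, for $\ep_0$ and $\de_0$ small depending on $\tau$, $T_{\ep,y}$ maps the closed ball $\{\var\in E_{\ep,y}:\|\var\|_\ep\le r_{\ep,y}\}$ into itself and is a $\tfrac12$-contraction there. For the self-map property, Lemma \ref{lem: estimate for the first order} gives $\rho^{-1}\|l_\ep\|_\ep\le C\rho^{-1}\ep^{3/2}\bigl(\ep^\al+(V(y)-V(0))\bigr)$, and writing $\ep^\al=\ep^\tau\,\ep^{\al-\tau}$ and $V(y)-V(0)=(V(y)-V(0))^\tau(V(y)-V(0))^{1-\tau}$ one sees this is $\le\tfrac12 r_{\ep,y}$ once $\ep^\tau$ and $(V(y)-V(0))^\tau$ are small enough to beat the fixed constant $C\rho^{-1}$ --- which is precisely why the exponents $\al-\tau$ and $1-\tau$ of (\ref{eq: a simple estimate}) arise; meanwhile Lemma \ref{lem: error estimates} with $i=1$ bounds $\rho^{-1}\|R'_\ep(\var)\|_\ep$ on the ball by $C(b+1)\ep^{3/2}\bigl(\ep^{\al-\tau}+(V(y)-V(0))^{1-\tau}\bigr)^{\min(p,2)}$, which, since $\min(p,2)>1$ and the base tends to $0$, is also $\le\tfrac12 r_{\ep,y}$. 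For the contraction property, the mean value theorem reduces matters to the bound $\rho^{-1}\sup\|R''_\ep(\var)\|\le\tfrac12$ over the ball, and Lemma \ref{lem: error estimates} with $i=2$ shows $\|R''_\ep(\var)\|\to0$ as $\ep_0,\de_0\to0$. Banach's fixed point theorem then yields a unique $\var_{\ep,y}$ in the ball, and the inclusion $\|\var_{\ep,y}\|_\ep\le r_{\ep,y}$ is exactly (\ref{eq: a simple estimate}).

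For the $C^1$ regularity of $y\mapsto\var_{\ep,y}$, I would apply the implicit function theorem to the $C^1$ map sending $(y,\var)\in B_\de(0)\times H_\ep$ to the pair formed by $Q_{\ep,y}\tfrac{\pa J_\ep}{\pa\var}(y,\var)$ and the constraint vector $\bigl(\langle\var,\pa_{y_j}U_{\ep,y}\rangle_\ep\bigr)_{j=1,2,3}$; its derivative in $\var$ at $\var_{\ep,y}$ is $Q_{\ep,y}\L_\ep+Q_{\ep,y}R''_\ep(\var_{\ep,y})$ on $E_{\ep,y}$, invertible by Proposition \ref{prop: bounded inverse operator} together with the smallness of $R''_\ep$ just used, while the dependence on $y$ is smooth because $y\mapsto U_{\ep,y}$ and $y\mapsto\pa_{y_i}U_{\ep,y}$ are smooth. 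This step is routine; see Cao and Peng \cite{Cao-Peng-2009} and Bartsch and Peng \cite{Bartsch-Peng-2007}.

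The main obstacle is the bookkeeping of the powers of $\ep$. Unlike the Schr\"odinger case, $R_\ep$ contains nonlocal cubic terms, so Lemma \ref{lem: error estimates} carries the extra factor $(b+1)\ep^{-3/2}\bigl(1+\ep^{-3/2}\|\var\|_\ep\bigr)$, and one must check that on the ball of radius $r_{\ep,y}=O(\ep^{3/2})$ these negative powers of $\ep$ are more than cancelled by the powers gained from $\|\var\|_\ep^{3-i}$. The small parameter $\tau$ is introduced precisely to leave enough room to absorb at once both these higher-order remainders and the fixed (possibly large) constants $C$ and $\rho^{-1}$; choosing $\tau$ smaller only forces $\ep_0$ and $\de_0$ to be taken smaller.
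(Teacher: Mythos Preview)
Your proposal is correct and follows essentially the same approach as the paper: a contraction mapping in the ball of radius $r_{\ep,y}$ inside $E_{\ep,y}$, with the self-map and Lipschitz properties coming from Lemma~\ref{lem: estimate for the first order} and Lemma~\ref{lem: error estimates} exactly as you describe, and the $C^1$ dependence handled by a standard reference. The only cosmetic differences are that the paper suppresses the projection $Q_{\ep,y}$ by identifying $l_\ep$ and $R'_\ep(\var)$ directly with their representatives in $E_{\ep,y}$, and for the $C^1$ regularity it cites Cao--Noussair--Yan \cite{Cao-Noussair-Yan-2008} rather than invoking the implicit function theorem explicitly.
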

\begin{proof}
This existence of the mapping $y\mapsto\var_{\ep,y}$ follows from
the contraction mapping theorem. We construct a contraction map as
follows.

Let $\ep_{1}$ and $\de_{1}$ be defined as in Proposition \ref{prop: bounded inverse operator}.
Let $\ep_{0}\le\ep_{1}$ and $\de_{0}\le\de_{1}$. We will choose
$\ep_{0}$ and $\de_{0}$ later. Fix $y\in B_{\de}(0)$ for $\de<\de_{0}$.
Recall that
\[
J_{\ep}(y,\var)=I_{\ep}(U_{\ep,y})+l_{\ep}(\var)+\frac{1}{2}\langle\L_{\ep}\var,\var\rangle+R_{\ep}(\var).
\]
So we have
\[
\frac{\pa J_{\ep}(\var)}{\pa\var}=l_{\ep}+\L_{\ep}\var+R_{\ep}^{\prime}(\var).
\]
Since $E_{\ep,y}$ is a closed subspace of $H_{\ep}$, Lemma \ref{lem: estimate for the first order}
and Lemma \ref{lem: error estimates} implies that $l_{\ep}$ and
$R_{\ep}^{\prime}(\var)$ are bounded linear operators when restricted
on $E_{\ep,y}$. So we can identify $l_{\ep}$ and $R_{\ep}^{\prime}(\var)$
with their representatives in $E_{\ep,y}$. Then, to prove Proposition
\ref{prop: reduction map}, it is equivalent to find $\var\in E_{\ep,y}$
that satisfies
\begin{equation}
\var={\cal A}_{\ep}(\var)\equiv-\L_{\ep}^{-1}\left(l_{\ep}+R_{\ep}^{\prime}(\var)\right).\label{eq: contraction map}
\end{equation}

To solve (\ref{eq: contraction map}), define
\[
N_{\ep}=\left\{ \var\in E_{\ep,y}:\|\var\|_{\ep}\le\ep^{\frac{3}{2}}\left(\ep^{\al-\tau}+\left(V(y)-V(0)\right)^{1-\tau}\right)\right\}
\]
for $\tau\in(0,\al/2)$ sufficiently small. We prove that ${\cal A}_{\ep}:N_{\ep}\to N_{\ep}$
is a contraction map.

First we show that ${\cal A}_{\ep}(N_{\ep})\subset N_{\ep}$. Let
$\rho>0$ be the constant defined as in Proposition \ref{prop: bounded inverse operator}
so that $\|\L_{\ep}^{-1}\|\le\rho^{-1}$. Then for $\var\in N_{\ep}$
we have
\[
|{\cal A}_{\ep}(\var)|\le\rho^{-1}\left(\|l_{\ep}\|+\|R_{\ep}^{\prime}(\var)\|\right).
\]
Lemma \ref{lem: estimate for the first order} gives
\[
\|l_{\ep}\|\le C\ep^{\frac{3}{2}}\left(\ep^{\al}+\left(V(y)-V(0)\right)\right),
\]
where $C$ is independent of $\ep$ and $y$. Since $\tau>0$ and
$V(y)\to V(0)$ as $y\to0$, we may choose $\de_{0}$ and $\ep_{0}$
sufficiently small, so that when $\ep<\ep_{0}$ and $\de<\de_{0}$
we have
\begin{equation}
\|l_{\ep}\|\le\frac{\rho}{2}\ep^{\frac{3}{2}}\left(\ep^{\al-\tau}+\left(V(y)-V(0)\right)^{1-\tau}\right).\label{eq: estimate for the first order term}
\end{equation}
To estimate $\|R_{\ep}^{\prime}(\var)\|$ we use Lemma \ref{lem: error estimates}.
Note that $\var\in N_{\ep}$ implies that
\begin{equation}
\ep^{-\frac{3}{2}}\|\var\|_{\ep}\le\ep^{\al-\tau}+\left(V(y)-V(0)\right)^{1-\tau}=o(1),\label{eq: infinitesimal}
\end{equation}
where $o(1)\to0$ as $\ep_{0},\de_{0}\to0$. In particular, for $\var\in N_{\ep}$
and $\ep$ sufficiently small, we have
\begin{equation}
1+\ep^{-\frac{3}{2}}\|\var\|_{\ep}\le2.\label{eq:  bounded error}
\end{equation}
Hence, using Lemma \ref{lem: error estimates} and (\ref{eq: infinitesimal})
gives
\begin{equation}
\begin{aligned}\|R_{\ep}^{\prime}(\var)\| & \le C\ep^{-\frac{3(p-1)}{2}}\|\var\|_{\ep}^{p}+C(b+1)\ep^{-\frac{3}{2}}\|\var\|_{\ep}^{2}\le\frac{\rho}{2}\|\var\|_{\ep}.\end{aligned}
\label{eq: estimate for the error term}
\end{equation}
Combining (\ref{eq: estimate for the first order term}) and (\ref{eq: estimate for the error term}),
we deduce
\[
{\cal A}_{\ep}(\var)\le\ep^{\frac{3}{2}}\left(\ep^{\al-\tau}+\left(V(y)-V(0)\right)^{1-\tau}\right).
\]
This proves that ${\cal A}_{\ep}(N_{\ep})\subset N_{\ep}$.

Next we prove that ${\cal A}_{\ep}:N_{\ep}\to N_{\ep}$ is a contraction
map. For any $\var,\psi\in N_{\ep}$, we have
\[
|{\cal A}_{\ep}(\var)-{\cal A}_{\ep}(\psi)|=|\L_{\ep}^{-1}(R_{\ep}^{\prime}(\var)-R_{\ep}^{\prime}(\psi))|\le\rho^{-1}|(R_{\ep}^{\prime}(\var)-R_{\ep}^{\prime}(\psi))|.
\]
To estimate $|(R_{\ep}^{\prime}(\var)-R_{\ep}^{\prime}(\psi))|$,
we use the estimate for $R_{\ep}^{\prime\prime}$ in Lemma \ref{lem: error estimates}.
Argue similarly as above. We obtain
\[
\begin{aligned}|(R_{\ep}^{\prime}(\var)-R_{\ep}^{\prime}(\psi))| & \le C\ep^{-\frac{3(p-1)}{2}}\left(\|\var\|_{\ep}^{p-1}+\|\psi\|_{\ep}^{p-1}\right)\|\var-\psi\|_{\ep}\\
 & \quad+C\ep^{-\frac{3}{2}}\left(\|\var\|_{\ep}+\|\psi\|_{\ep}\right)\|\var-\psi\|_{\ep}\\
 & \le\frac{\rho}{2}\|\var-\psi\|_{\ep},
\end{aligned}
\]
where we have used (\ref{eq: infinitesimal}) and (\ref{eq:  bounded error})
again. Hence we deduce that
\[
|{\cal A}_{\ep}(\var)-{\cal A}_{\ep}(\psi)|\le\frac{1}{2}\|\var-\psi\|_{\ep}.
\]
This shows that ${\cal A}_{\ep}:N_{\ep}\to N_{\ep}$ is a contraction
map. Thus, there exists a contraction map $y\mapsto\var_{\ep,y}$
such that (\ref{eq: contraction map}) holds.

At last, we claim that the map $y\mapsto\var_{\ep,y}$ belongs to
$C^{1}$. Indeed, by similar arguments as that of Cao, Noussair and
Yan \cite{Cao-Noussair-Yan-2008}, we can deduce a unique $C^{1}$-map
$\tilde{\var}_{\ep,y}:B_{\de}(0)\to E_{\ep,y}$ which satisfies (\ref{eq: contraction map}).
Therefore, by the uniqueness $\var_{\ep,y}=\tilde{\var}_{\ep,y}$,
and hence the claim follows.
\end{proof}

\subsection{Proof of Theorem \ref{thm: main result-existence}}

First we give the following observation.

\begin{lemma} \label{lem: 4.2}There holds
\[
\left\langle \L_{\ep}\var,\var\right\rangle =O\left(\|\var\|_{\ep}^{2}\right)
\]
for $\var\in E_{\ep}$.\end{lemma}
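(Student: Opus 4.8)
The plan is to substitute $\psi=\var$ directly into the definition (\ref{eq: def of biliear L-epsilon}) of $\L_\ep$ and estimate the four resulting terms separately; in fact the constraint $\var\in E_\ep$ plays no role, since only an upper bound is sought. Writing
\[
\langle\L_\ep\var,\var\rangle=\|\var\|_\ep^2+\ep b\Big(\int_{\R^3}|\na U_{\ep,y}|^2\Big)\int_{\R^3}|\na\var|^2+2\ep b\Big(\int_{\R^3}\na U_{\ep,y}\cdot\na\var\Big)^2-p\int_{\R^3}U_{\ep,y}^{p-1}\var^2,
\]
the first three terms are nonnegative and the fourth is nonpositive, so it suffices to bound the last three (in absolute value) by $C\|\var\|_\ep^2$ with $C$ independent of $\ep$ and of $y\in B_\de(0)$; the two-sided estimate $|\langle\L_\ep\var,\var\rangle|\le C\|\var\|_\ep^2$ then follows.

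For the two nonlocal gradient terms I would use the elementary scaling identity $\int_{\R^3}|\na U_{\ep,y}|^2=\ep\|\na U\|_2^2$ together with the trivial bound $\ep^2 a\int_{\R^3}|\na\var|^2\le\|\var\|_\ep^2$ (a consequence of $\inf_{\R^3}V>0$). The second term is then $b\ep^2\|\na U\|_2^2\int_{\R^3}|\na\var|^2\le (b\|\na U\|_2^2/a)\|\var\|_\ep^2$, and, after a Cauchy--Schwarz estimate $\big(\int_{\R^3}\na U_{\ep,y}\cdot\na\var\big)^2\le\ep\|\na U\|_2^2\int_{\R^3}|\na\var|^2$, the third term is $\le(2b\|\na U\|_2^2/a)\|\var\|_\ep^2$. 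For the last term I would bound $U_{\ep,y}^{p-1}$ pointwise by $\|U\|_{L^\infty(\R^3)}^{p-1}$ (using $\|U_{\ep,y}\|_{L^\infty}=\|U\|_{L^\infty}<\infty$, which is part of Proposition \ref{prop: Existence}) and then use $\int_{\R^3}\var^2\le(\inf_{\R^3}V)^{-1}\|\var\|_\ep^2$, giving $p\int_{\R^3}U_{\ep,y}^{p-1}\var^2\le p\|U\|_{L^\infty}^{p-1}(\inf_{\R^3}V)^{-1}\|\var\|_\ep^2$. Collecting everything yields $|\langle\L_\ep\var,\var\rangle|\le C\|\var\|_\ep^2$ with $C=1+3b\|\na U\|_2^2/a+p\|U\|_{L^\infty}^{p-1}(\inf_{\R^3}V)^{-1}$, which is the claim. (Alternatively, the last term can be handled via the $\ep$-Sobolev inequality (\ref{eq: epsilon-Sobolev inequality}) after rescaling, but the crude $L^\infty$ bound is enough and avoids any use of the restriction $p<5$.)

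There is no real obstacle here; the only point needing a moment's care is tracking the powers of $\ep$ generated by rescaling the nonlocal Kirchhoff terms $\ep b(\int_{\R^3}|\na U_{\ep,y}|^2)$ and $2\ep b(\int_{\R^3}\na U_{\ep,y}\cdot\na\var)$. One checks that the explicit factor $\ep$, combined with the factor $\ep$ coming from $\int_{\R^3}|\na U_{\ep,y}|^2=\ep\|\na U\|_2^2$, exactly cancels the $\ep^{-2}$ hidden in $\int_{\R^3}|\na\var|^2\le(\ep^2 a)^{-1}\|\var\|_\ep^2$, leaving an $\ep$-independent constant, as required for a uniform $O(\|\var\|_\ep^2)$ bound.
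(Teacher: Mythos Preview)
Your proof is correct and follows essentially the same approach as the paper: expand $\langle\L_\ep\var,\var\rangle$, handle the two nonlocal Kirchhoff terms via the scaling $\int_{\R^3}|\na U_{\ep,y}|^2=\ep\|\na U\|_2^2$ together with $\ep^2\int|\na\var|^2\le a^{-1}\|\var\|_\ep^2$, and bound the potential term. The only minor difference is in the last term: the paper invokes the $\ep$-Sobolev inequality (\ref{eq: epsilon-Sobolev inequality}), whereas you use the cruder pointwise bound $U_{\ep,y}^{p-1}\le\|U\|_{L^\infty}^{p-1}$ combined with $\int\var^2\le(\inf V)^{-1}\|\var\|_\ep^2$; your route is shorter and, as you note, avoids any appeal to the subcriticality $p<5$.
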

\begin{proof}
The proof is direct. Recall that
\[
\begin{aligned}\langle\L_{\ep}\var,\var\rangle & =\|\var\|_{\ep}^{2}+b\ep\left(\int_{\R^{3}}|\na U_{\ep,y}|^{2}\int_{\R^{3}}|\na\var|^{2}+2\left(\int_{\R^{3}}\na U_{\ep,y}\cdot\na\var\right)^{2}\right)\\
 & \quad-p\int_{\R^{3}}U_{\ep,y}^{p-1}\var^{2}.
\end{aligned}
\]
Direct computation gives
\[
\ep\int_{\R^{3}}|\na U_{\ep,y}|^{2}\int_{\R^{3}}|\na\var|^{2}=C_{0}\ep^{2}\int_{\R^{3}}|\na\var|^{2}
\]
with $C_{0}=\int_{\R^{3}}|\na U|^{2}$. Thus using this equality and
H\"older's inequality yields
\[
\ep\left(\int_{\R^{3}}\na U_{\ep,y}\cdot\na\var\right)^{2}=O\left(\|\var\|_{\ep}^{2}\right).
\]
The last term $\int_{\R^{3}}U_{\ep,y}^{p-1}\var^{2}$ can be estimated
by using (\ref{eq: epsilon-Sobolev inequality}). Combing the above
estimates together, we complete the proof.
\end{proof}
Now we prove Theorem \ref{thm: main result-existence}.

\begin{proof}[Proof of Theorem \ref{thm: main result-existence}]
Let $\ep_{0}$ and $\de_{0}$ be defined as in Proposition \ref{prop: reduction map}
and let $\ep<\ep_{0}$. Fix $0<\de<\de_{0}$. Let $y\mapsto\var_{\ep,y}$
for $y\in B_{\de}(0)$ be the map obtained in Proposition \ref{prop: reduction map}.
As aforementioned in Step 2 in Section \ref{sec: Some-preliminaries},
it is equivalent to find a critical point for the function $j_{\ep}$
defined as in (\ref{eq: reduction function}) by Lemma \ref{lem: reduction}.
By the Taylor expansion, we have
\[
j_{\ep}(y)=J(y,\var_{\ep,y})=I_{\ep}(U_{\ep,y})+l_{\ep}(\var_{\ep,y})+\frac{1}{2}\langle\L_{\ep}\var_{\ep,y},\var_{\ep,y}\rangle+R_{\ep}(\var_{\ep,y}).
\]
We analyze the asymptotic behavior of $j_{\ep}$ with respect to $\ep$
first.

By Proposition \ref{Prop: asymptotics of perturbation of U}, we have
\[
I_{\ep}(U_{\ep,y})=A\ep^{3}+B\ep^{3}\left(V(y)-V(0)\right)+O(\ep^{3+\al})
\]
for some constants $A,B\in\R$. Lemma \ref{lem: estimate for the first order}
and Proposition \ref{prop: reduction map} give
\[
l_{\ep}(\var_{\ep,y})=O(\ep^{3})\left(\ep^{\al}+\left(V(y)-V(0)\right)\right)\left(\ep^{\al-\tau}+\left(V(y)-V(0)\right)^{1-\tau}\right).
\]
Lemma \ref{lem: 4.2} gives $\langle\L_{\ep}\var_{\ep,y},\var_{\ep,y}\rangle=O(\|\var_{\ep,y}\|_{\ep}^{2})$.
Lemma \ref{lem: error estimates} gives
\[
|R_{\ep}(\var_{\ep,y})|\le C\left(\ep^{-\frac{3(p-1)}{2}}\|\var_{\ep,y}\|_{\ep}^{p+1}+\ep^{-\frac{3}{2}}\|\var_{\ep,y}\|_{\ep}^{3}\right)=o(1)\|\var_{\ep,y}\|_{\ep}^{2},
\]
 where we have used (\ref{eq: infinitesimal}) since $\var_{\ep,y}\in N_{\ep}$.
Combining the above estimates yields
\[
\begin{aligned}j_{\ep}(y) & =A\ep^{3}+B\ep^{3}\left(V(y)-V(0)\right)+O(\ep^{3+\al})\\
 & \quad+O(\ep^{3})\left(\ep^{\al-\tau}+\left(V(y)-V(0)\right)^{1-\tau}\right)^{2}.
\end{aligned}
\]

Now consider the minimizing problem
\[
j_{\ep}(y_{\ep})\equiv\inf_{y\in B_{\de}(0)}j_{\ep}(y).
\]
We claim that $y_{\ep}\in B_{\de}(0)$. That is, $y_{\ep}$ is an
interior point of $B_{\de}(0)$.

To prove the claim, we apply a comparison argument. Let $e\in\R^{3}$
with $|e|=1$ and $\eta>1$. We will choose $\eta$ later. Let $z_{\ep}=\ep^{\eta}e\in B_{\de}(0)$
for a sufficiently large $\eta>1$. By the above asymptotics formula,
we have
\[
\begin{aligned}j_{\ep}(z_{\ep}) & =A\ep^{3}+B\ep^{3}\left(V(z_{\ep})-V(0)\right)+O(\ep^{3+\al})\\
 & \quad+O(\ep^{3})\left(\ep^{\al-\tau}+\left(V(z_{\ep})-V(0)\right)^{1-\tau}\right)^{2}.
\end{aligned}
\]
Applying the H\"older continuity of $V$, we derive that
\[
\begin{aligned}j_{\ep}(z_{\ep}) & =A\ep^{3}+O(\ep^{3+\al\eta})+O(\ep^{3+\al})\\
 & \quad+O(\ep^{3}(\ep^{2(\al-\tau)}+\ep^{2\eta\al(1-\tau)}))\\
 & =A\ep^{3}+O(\ep^{3+\al}),
\end{aligned}
\]
where $\eta>1$ is chosen to be sufficiently large accordingly. Note
that we also used the fact that $\tau\ll\al/2$. Thus, by using $j(y_{\ep})\le j(z_{\ep})$
we deduce
\[
B\ep^{3}\left(V(y_{\ep})-V(0)\right)+O(\ep^{3})\left(\ep^{\al-\tau}+\left(V(y_{\ep})-V(0)\right)^{1-\tau}\right)^{2}\le O(\ep^{3+\al}).
\]
That is,
\begin{equation}
B\left(V(y_{\ep})-V(0)\right)+O(1)\left(\ep^{\al-\tau}+\left(V(y_{\ep})-V(0)\right)^{1-\tau}\right)^{2}\le O(\ep^{\al}).\label{eq: comparision inequality}
\end{equation}
If $y_{\ep}\in\pa B_{\de}(0)$, then by the assumption (V2), we have
\[
V(y_{\ep})-V(0)\ge c_{0}>0
\]
for some constant $0<c_{0}\ll1$ since $V$ is continuous at $x=0$
and $\de$ is sufficiently small. Thus, by noting that $B>0$ from
Proposition \ref{Prop: asymptotics of perturbation of U} and sending
$\ep\to0$, we infer from (\ref{eq: comparision inequality}) that
\[
c_{0}\le0.
\]
We reach a contradiction. This proves the claim. Thus $y_{\ep}$ is
a critical point of $j_{\ep}$ in $B_{\de}(0)$.

Theorem \ref{thm: main result-existence} now follows from the claim
and Lemma \ref{lem: reduction}. \end{proof}

\section{Local Pohozaev identity }

In this section we present some preliminaries for the proof of Theorem
\ref{thm: uniqueness}. In particular, we derive a local Pohozaev
type identity which plays an important role in the proof of Theorem
\ref{thm: uniqueness}.

First we explore some properties of the solutions derived as in Theorem
\ref{thm: main result-existence}. Let $u_{\ep}=U_{\ep,y_{\ep}}+\var_{\ep}$
be a solution of Eq. (\ref{eq: Kirchhoff}). By Theorem \ref{thm: main result-existence},
we know $\|\var_{\ep}\|_{\ep}=O(\ep^{3/2})$. Thus, a straightforward
computation gives
\begin{equation}
\|u_{\ep}\|_{\ep}=O(\ep^{3/2}).\label{eq: size of the solution}
\end{equation}
Set
\[
\bar{u}_{\ep}(x)=u_{\ep}(\ep x+y_{\ep}).
\]
Then $\bar{u}_{\ep}>0$ solves
\begin{eqnarray}
-\left(a+b\int_{\R^{3}}|\na\bar{u}_{\ep}|^{2}\right)\De\bar{u}_{\ep}+\bar{V}_{\ep}(x)\bar{u}_{\ep}=\bar{u}_{\ep}^{p} &  & \text{in }\R^{3},\label{eq: scaled equation}
\end{eqnarray}
with $\bar{V}_{\ep}(x)=V(\ep x+y_{\ep})$. Moreover, there holds
\begin{equation}
\int_{\R^{3}}\left(a|\na\bar{u}_{\ep}|^{2}+\bar{V}_{\ep}\bar{u}_{\ep}^{2}\right)=\ep^{-3}\|u_{\ep}\|_{\ep}^{2}=O(1)\label{eq: uniform boundedness}
\end{equation}
by (\ref{eq: size of the solution}).

By the assumption (V1), $\bar{V}_{\ep}$ is bounded uniformly with
respect to $\ep$, and
\[
\ga\equiv\inf_{x\in\R^{3}}\bar{V}_{\ep}(x)>0.
\]
Therefore, $\bar{u}_{\ep}$ satisfies
\[
\begin{cases}
-a\De\bar{u}_{\ep}+\ga\bar{u}_{\ep}\le\bar{u}_{\ep}^{p} & \text{in }\R^{3},\\
\sup_{\ep}\|\bar{u}_{\ep}\|_{H^{1}(\R^{3})}\le C<\wq.
\end{cases}
\]
Using the comparison principle as that of He and Xiang \cite{He-Xiang},
we infer that
\begin{eqnarray}
\bar{u}_{\ep}(x)\le Ce^{-\eta|x|}, &  & x\in\R^{3}\label{eq: decay estimates}
\end{eqnarray}
holds for some constants $C,\eta>0$ independent of $\ep>0$.

We remark that (\ref{eq: decay estimates}) is equivalent to
\begin{eqnarray*}
u_{\ep}(x)\le Ce^{-\frac{\eta|x-y_{\ep}|}{\ep}}, &  & x\in\R^{3},
\end{eqnarray*}
which means that $u_{\ep}$ concentrates at $x=0$ rapidly as $\ep\to0$.
In particular, under the additional assumption (V3), we will prove
that $y_{\ep}=o(\ep)$, which in turn implies that $u_{\ep}(x)\le Ce^{-\eta|x|/\ep}$
for $x\in\R^{3}$. This shows that the solutions concentrate around
the minima of $V$.

Furthermore, by using the Bessel potential, we derive
\[
\bar{u}_{\ep}\le\frac{1}{-a\De+\ga}\bar{u}_{\ep}^{p}.
\]
Since $p<5$ is $H^{1}$-subcritical, the standard potential theory
and iteration arguments shows that $\bar{u}_{\ep}\in L^{\wq}(\R^{3})$
and
\[
\|\bar{u}_{\ep}\|_{L^{\wq}(\R^{3})}\le C<\wq
\]
holds for some $C>0$ uniformly with respect to $\ep$. As a consequence
of this estimates and the assumption (V1), we further infer from Eq.
(\ref{eq: scaled equation}) that
\begin{equation}
\|\De\bar{u}_{\ep}\|_{L^{\wq}(\R^{3})}\le C\label{eq: boundedness estimate for laplacian}
\end{equation}
holds uniformly with respect to sufficiently small $\ep>0$.

Next we derive a local Pohozaev-type identity for solutions of Eq.
(\ref{eq: Kirchhoff}).

\begin{proposition} \label{prop: Pohozaev identity}Let $u$ be a
positive solution of Eq. (\ref{eq: Kirchhoff}). Let $\Om$ be a bounded
smooth domain in $\R^{3}$. Then, for each $i=1,2,3$, there hold
\begin{equation}
\begin{aligned}\int_{\Om}\frac{\pa V}{\pa x_{i}}u^{2} & =\left(\ep^{2}a+\ep b\int_{\R^{3}}|\na u|^{2}\right)\int_{\pa\Om}\left(|\na u|^{2}\nu_{i}-2\frac{\pa u}{\pa\nu}\frac{\pa u}{\pa x_{i}}\right)\\
 & \quad+\int_{\pa\Om}Vu^{2}\nu_{i}-\frac{2}{p+1}\int_{\pa\Om}u{}^{p+1}\nu_{i}.
\end{aligned}
\label{eq: Pohozaev}
\end{equation}
Here $\nu=(\nu_{1},\nu_{2},\nu_{3})$ is the unit outward normal of
$\pa\Om$.\end{proposition}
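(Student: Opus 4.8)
The plan is to prove (\ref{eq: Pohozaev}) by the classical Pohozaev device: multiply the equation by $\pa u/\pa x_{i}$ and integrate over $\Om$. The feature that makes the nonlocal Kirchhoff term harmless for a \emph{local} identity is that, once the solution $u$ is fixed, the quantity $A:=\ep^{2}a+\ep b\int_{\R^{3}}|\na u|^{2}$ is a positive constant, independent of $x$. Thus $-A\De u$ behaves exactly like a constant multiple of the Laplacian, and $A$ is never differentiated in any integration by parts; the equation effectively reads $-A\De u+V(x)u=u^{p}$ with $A$ a fixed number.

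First I would record the regularity needed. Since $u\in H^{1}(\R^{3})$ and, by the a priori bounds already established (in particular $u\in L^{\infty}_{\loc}(\R^{3})$ and the bound (\ref{eq: boundedness estimate for laplacian}) on $\De u$), the right-hand side $u^{p}-V(x)u$ lies in $L^{\infty}_{\loc}(\R^{3})$; hence $-A\De u\in L^{\infty}_{\loc}$ and standard elliptic estimates give $u\in W^{2,q}_{\loc}(\R^{3})$ for every $q<\wq$. In particular, for the bounded smooth domain $\Om$ we have $u\in C^{1}(\overline{\Om})\cap H^{2}(\Om)$, which is enough to justify all the integrations by parts below and to make every boundary integral in (\ref{eq: Pohozaev}) well defined.

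Next comes the computation. Multiplying $-A\De u+V(x)u=u^{p}$ by $\pa u/\pa x_{i}$ and integrating over $\Om$, I would treat the three resulting terms in turn. For the Laplacian term, one integration by parts gives $\int_{\Om}\De u\,\pa_{x_{i}}u=\int_{\pa\Om}\frac{\pa u}{\pa\nu}\frac{\pa u}{\pa x_{i}}-\int_{\Om}\na u\cdot\na(\pa_{x_{i}}u)$, and then $\na u\cdot\na(\pa_{x_{i}}u)=\tfrac12\pa_{x_{i}}|\na u|^{2}$ turns the last volume integral into $\tfrac12\int_{\pa\Om}|\na u|^{2}\nu_{i}$. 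For the potential term, writing $Vu\,\pa_{x_{i}}u=\tfrac12 V\,\pa_{x_{i}}(u^{2})$ and integrating by parts produces $\tfrac12\int_{\pa\Om}Vu^{2}\nu_{i}-\tfrac12\int_{\Om}\frac{\pa V}{\pa x_{i}}u^{2}$, which is the only place where $V$ is differentiated and which yields the left-hand side of (\ref{eq: Pohozaev}). For the nonlinear term, $u^{p}\pa_{x_{i}}u=\tfrac1{p+1}\pa_{x_{i}}(u^{p+1})$ integrates to $\tfrac1{p+1}\int_{\pa\Om}u^{p+1}\nu_{i}$. Collecting these three identities, multiplying through by $2$, and isolating $\int_{\Om}\frac{\pa V}{\pa x_{i}}u^{2}$ on the left gives exactly (\ref{eq: Pohozaev}); in particular the two boundary contributions coming from the Laplacian term combine into $A\int_{\pa\Om}\bigl(|\na u|^{2}\nu_{i}-2\frac{\pa u}{\pa\nu}\frac{\pa u}{\pa x_{i}}\bigr)$.

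There is no genuine analytic difficulty here: the argument is a routine Pohozaev manipulation. The only points deserving care are, first, the observation that the nonlocal coefficient $A$ is truly a constant — immediate from its definition, but conceptually essential, since it is precisely what permits a pointwise local identity despite the nonlocality of the equation — and second, the verification that $u$ has enough regularity up to $\pa\Om$ for the boundary integrals and the second integration by parts, which follows from the $L^{\infty}$ bounds on $u$ and $\De u$ together with classical elliptic regularity. If one preferred to avoid invoking $C^{1}$ regularity up to the boundary, the same identity follows by first proving it on a slightly smaller smooth domain where interior regularity applies and then letting that domain exhaust $\Om$, using dominated convergence and the decay estimate (\ref{eq: decay estimates}).
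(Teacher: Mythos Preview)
Your proposal is correct and follows exactly the approach indicated in the paper: multiply the equation by $\pa_{x_{i}}u$ and integrate by parts over $\Om$. In fact your write-up is considerably more detailed than the paper's own proof, which merely states this strategy and refers to Proposition~2.3 of Cao, Li and Luo \cite{Cao-Li-Luo-2015} for the computation; your explicit handling of each of the three terms and the regularity justification are welcome additions rather than deviations.
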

\begin{proof}
Identity (\ref{eq: Pohozaev}) is obtained by multiplying $\pa_{x_{i}}u$
on both sides of Eq. (\ref{eq: Kirchhoff}) and then do integrating
by parts. We refer the readers to Proposition 2.3 of Cao, Li and Luo
\cite{Cao-Li-Luo-2015} for details. %we have \begin{equation} -\left(\ep^{2}a+\ep b\int_{\R^{3}}|\na u|^{2}\D x\right)\int_{\Omega}\Delta u\partial_{x_{i}}u+\int_{\Omega}Vu\partial_{x_{i}}u=\int_{\Omega}u^{p}\partial_{x_{i}}u.\label{eq: 2.6} \end{equation} Next, \begin{equation} \begin{aligned}\textrm{LHS of \eqref{eq: 2.6}} & =\left(\ep^{2}a+\ep b\int_{\R^{3}}|\na u|^{2}\right)\left(-\int_{\partial\Omega}\frac{\partial u}{\partial x_{i}}\frac{\partial u}{\partial\nu}+\int_{\Omega}\nabla u\cdot\nabla\pa_{x_{i}}u\right)\\  & \quad+\frac{1}{2}\int_{\partial\Omega}Vu^{2}\nu_{i}-\frac{1}{2}\int_{\Omega}\frac{\partial V}{\partial x_{i}}u^{2}, \end{aligned} \label{eq: 2.7} \end{equation}  also, \begin{equation} \int_{\Omega}\nabla u\cdot\nabla\pa_{x_{i}}u=\frac{1}{2}\int_{\partial\Omega}|\nabla u|^{2}\nu_{i}.\label{eq: 2.8} \end{equation} On the other hand, by Green's formula we have \begin{equation} \textrm{RHS of \eqref{eq: 2.6}}=\frac{1}{p+1}\int_{\partial\Omega}u^{p+1}\nu_{i}.\label{eq: 2.9} \end{equation} Then  (\ref{eq: Pohozaev}) follows from (\ref{eq: 2.6}), (\ref{eq: 2.7}), (\ref{eq: 2.8}) and (\ref{eq: 2.9}).Then  (\ref{eq: Pohozaev}) follows from (\ref{eq: 2.6}), (\ref{eq: 2.7}), (\ref{eq: 2.8}) and (\ref{eq: 2.9}).
\end{proof}
Now, let $u_{\ep}=U_{\ep,y_{\ep}}+\var_{\ep,y_{\ep}}$ be an arbitrary
solution of Eq. (\ref{eq: Kirchhoff}) derived as in Theorem \ref{thm: main result-existence}.
We know $y_{\ep}=o(1)$ as $\ep\to0$ from Theorem \ref{thm: main result-existence}.
We will improve this asymptotics estimate by assuming that $V$ satisfies
the additional assumption (V3), and by means of the above Pohozaev
type identity. However, before we proceed further, let us give some
observations first.

Notice that using polar coordinates, there holds
\[
\int_{1}^{2}\int_{\pa B_{r}(y_{\ep})}|f|=\int_{\{1<|x-y_{\ep}|<2\}}|f|\le\int_{\R^{3}}|f|
\]
for any $f\in L^{1}(\R^{3})$. So, there exists $d\in(1,2)$, possibly
depending on $f$, such that
\[
\int_{\pa B_{d}(y_{\ep})}|f|\le\int_{\R^{3}}|f|.
\]
Applying this inequality to $f=\ep^{2}|\na\var_{\ep}|^{2}+\var_{\ep}^{2}$,
we find a constant $d=d_{\ep}\in(1,2)$ such that
\begin{equation}
\int_{\pa B_{d}(y_{\ep})}\left(\ep^{2}|\na\var_{\ep}|^{2}+\var_{\ep}^{2}\right)\le\|\var_{\ep}\|_{\ep}^{2}.\label{eq: choose the radii}
\end{equation}
See Lemma 4.5 of \cite{Cao-Li-Luo-2015} for similar applications.
Also, for $d$ defined as above, it follows from (\ref{eq: Properties of U})
that
\begin{equation}
\ep^{2}\int_{\pa B_{d}(y_{\ep})}|\na U_{\ep,y_{\ep}}|^{2}=O(e^{-\si_{1}/\ep})\label{eq: exponential decay on the sphere}
\end{equation}
holds for some $\si_{1}>0$ independent of $\ep$. By an elementary
inequality, we have
\[
\int_{\pa B_{d}(y_{\ep})}|\na u_{\ep}|^{2}\le2\int_{\pa B_{d}(y_{\ep})}|\na U_{\ep,y_{\ep}}|^{2}+2\int_{\pa B_{d}(y_{\ep})}|\na\var_{\ep}|^{2}.
\]
Hence, for the constant $d$ chosen as above, we deduce
\begin{equation}
\ep^{2}\int_{\pa B_{d}(y_{\ep})}|\na u_{\ep}|^{2}=O(\|\var_{\ep}\|_{\ep}^{2}).\label{eq: inequality on gradient}
\end{equation}
Now we can improve the estimate for the asymptotic behavior of $y_{\ep}$
with respect to $\ep$.

\begin{proposition} \label{prop: rate of concentration} Assume that
$V$ satisfies (V1), (V2) and (V3). Let $u_{\ep}=U_{\ep,y_{\ep}}+\var_{\ep}$
be a solution derived as in Theorem \ref{thm: main result-existence}.
Then
\begin{eqnarray*}
|y_{\ep}|=o(\ep) &  & \text{as }\ep\to0.
\end{eqnarray*}
\end{proposition}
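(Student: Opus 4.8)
The plan is to apply the local Pohozaev identity \eqref{eq: Pohozaev} on the ball $\Om=B_{d}(y_{\ep})$, where $d=d_{\ep}\in(1,2)$ is chosen by the mean value argument preceding the statement (so that \eqref{eq: choose the radii}, \eqref{eq: exponential decay on the sphere} and \eqref{eq: inequality on gradient} hold), and to derive a contradiction from comparing the two sides under the assumption $|y_{\ep}|/\ep\not\to0$. A preliminary step I would carry out first is to sharpen the bound on $\var_{\ep}$ under (V3): re-running Lemma \ref{lem: estimate for the first order} and the contraction argument of Proposition \ref{prop: reduction map} with the polynomial expansion \eqref{eq: assumptin v3} in place of the mere H\"older estimate, and using the exponential decay \eqref{eq: Properties of U}, one gets
\[
\|\var_{\ep}\|_{\ep}\le C\ep^{3/2}\left(|y_{\ep}|+\ep\right)^{m},
\]
because $V(x)-V(x_{0})$ vanishes to order $m$ at $x_{0}$ (here I write $(|y_\ep|+\ep)$ for $\max(|y_\ep|,\ep)$). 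Together with the recorded bounds for the boundary terms, and the fact that the nonlocal factor $\ep^{2}a+\ep b\int_{\R^{3}}|\na u_{\ep}|^{2}$ is $O(\ep^{2})$ (since $\int_{\R^{3}}|\na u_{\ep}|^{2}=O(\ep)$, so the Kirchhoff term changes nothing in the orders, and $\ep b(\int|\na u_\ep|^2)\int_{\pa\Om}(\cdots)$ is handled exactly as $\ep^{2}a\int_{\pa\Om}(\cdots)$), this makes the whole right hand side of \eqref{eq: Pohozaev} of size $O(\|\var_{\ep}\|_{\ep}^{2})=O(\ep^{3}(|y_{\ep}|+\ep)^{2m})$.

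Next I would analyze the left hand side $\int_{B_{d}(y_{\ep})}\frac{\pa V}{\pa x_{i}}u_{\ep}^{2}$. Rescaling $x=\ep z+y_{\ep}$ and writing $u_{\ep}(x)=\bar{u}_{\ep}(z)=U(z)+\tilde{\var}_{\ep}(z)$, where $\tilde{\var}_{\ep}(z)=\var_{\ep}(\ep z+y_{\ep})$ obeys $\|\tilde{\var}_{\ep}\|_{H^{1}(\R^{3})}\le C\ep^{-3/2}\|\var_{\ep}\|_{\ep}$ and, by \eqref{eq: Properties of U} and \eqref{eq: decay estimates}, a uniform exponential bound $|\tilde{\var}_{\ep}(z)|\le Ce^{-\si|z|}$, one expands by \eqref{eq: assumptin v3}:
\[
\frac{\pa V}{\pa x_{i}}(\ep z+y_{\ep})=mc_{i}|\ep z_{i}+y_{\ep,i}|^{m-2}(\ep z_{i}+y_{\ep,i})+O\big(|\ep z+y_{\ep}|^{m}\big).
\]
The dominant contribution comes from $U^{2}$ against the first term and equals, up to exponentially small errors, $\ep^{m+2}mc_{i}\,g(y_{\ep,i}/\ep)$, where
\[
g(t)=\int_{\R}|s+t|^{m-2}(s+t)\,\rho(s)\,\D s,\qquad \rho(s)=\int_{\R^{2}}U(s,s_{2},s_{3})^{2}\,\D s_{2}\,\D s_{3}>0.
\]
Since $r\mapsto|r|^{m-2}r$ is continuous, odd and strictly increasing and $\rho$ is even and exponentially decaying, $g$ is odd and strictly increasing, so $g(t)=0$ exactly when $t=0$, and $g(t)=|t|^{m-2}t\,\|U\|_{L^{2}(\R^{3})}^{2}(1+o(1))$ as $|t|\to\infty$. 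The other contributions — the Taylor remainder $O(\ep^{3}(\ep^{m}+|y_{\ep}|^{m}))$, the cross term, bounded by $C\ep^{3/2}(|y_{\ep}|+\ep)^{m-1}\|\var_{\ep}\|_{\ep}$ after Cauchy--Schwarz and the exponential decay of $U$, and the $\tilde{\var}_{\ep}^{2}$-term, bounded by $C\|\var_{\ep}\|_{\ep}^{2}$ — are all $o\big(\ep^{3}(|y_{\ep}|+\ep)^{m-1}\big)$ by the sharpened bound on $\|\var_{\ep}\|_{\ep}$ and $|y_{\ep}|\to0$, hence negligible compared with the leading term as soon as $y_{\ep,i}/\ep$ is bounded away from $0$.

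Finally I would argue by contradiction. If $|y_{\ep}|/\ep\not\to0$, pass to a subsequence along which $y_{\ep}/\ep$ converges in $\big(\R\cup\{\pm\infty\}\big)^{3}$ to a limit with some component that is either a nonzero real number or $\pm\infty$; fix such an index $i$ and apply \eqref{eq: Pohozaev} for that $i$. If $y_{\ep,i}/\ep\to\ell_{i}\in\R\setminus\{0\}$, then $|y_\ep|=O(\ep)$ along this subsequence, the left hand side is $\ep^{m+2}\big(mc_{i}g(\ell_{i})+o(1)\big)$ — nonzero of exact order $\ep^{m+2}$ since $c_{i}\neq0$ by (V3) and $g(\ell_{i})\neq0$ — while the right hand side is $O(\ep^{3}(|y_{\ep}|+\ep)^{2m})=O(\ep^{3+2m})=o(\ep^{m+2})$; contradiction. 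If $y_{\ep,i}/\ep\to\pm\infty$, then $|y_{\ep}|+\ep=O(|y_\ep|)$ with $|y_\ep|\gg\ep$, the left hand side is $\sim\ep^{3}mc_{i}|y_{\ep,i}|^{m-2}y_{\ep,i}\|U\|_{L^{2}}^{2}$, of magnitude at least $c\,\ep^{3}|y_{\ep}|^{m-1}\ge c\,\ep^{m+2}$, while the right hand side is $O(\ep^{3}|y_{\ep}|^{2m})=o(\ep^{3}|y_{\ep}|^{m-1})$; again a contradiction. Hence $|y_{\ep}|=o(\ep)$. I expect the main obstacle to be precisely the bookkeeping of the error terms on the left hand side — in particular checking that the cross term and the $\tilde{\var}_{\ep}^{2}$-term are genuinely of lower order, which is where the sharpened estimate for $\|\var_{\ep}\|_{\ep}$ under (V3) and the uniform exponential decay of the rescaled solution are indispensable — together with arranging the dichotomy so that one well-chosen component $i$ already yields the contradiction in both scaling regimes of $y_{\ep}/\ep$.
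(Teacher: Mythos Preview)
Your proposal is correct and follows the same core strategy as the paper: apply the local Pohozaev identity \eqref{eq: Pohozaev} on $B_{d}(y_{\ep})$, show the boundary terms are of higher order, isolate the leading contribution on the left from $\pa_{x_i}V$ against $U_{\ep,y_{\ep}}^{2}$ via the expansion (V3), and derive a contradiction. The organization differs in two respects. First, the paper does \emph{not} sharpen the $\var_{\ep}$-bound; it keeps the $\tau$-loss from Proposition~\ref{prop: reduction map}, namely $\|\var_{\ep}\|_{\ep}=O\big(\ep^{3/2}(\ep^{m-\tau}+|y_{\ep}|^{m(1-\tau)})\big)$, and compensates by first proving $|y_{\ep}|=O(\ep)$ through the pointwise inequality
\[
|y_{\ep,i}|^{m}\le m|\ep z_{i}+y_{\ep,i}|^{m-2}(\ep z_{i}+y_{\ep,i})y_{\ep,i}+C\big(|\ep z_{i}|^{m}+|y_{\ep,i}|^{m-m^{\ast}}|\ep z_{i}|^{m^{\ast}}\big)
\]
integrated against $U^{2}$, together with Young's inequality. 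Only then does it argue $o(\ep)$ by contradiction, assuming $y_{\ep}/\ep\to A\neq 0$ and using that $\int_{\R^{3}}|z+A|^{m-2}(z+A)U^{2}(z)\,\D z=0$ forces $A=0$ because $U$ is radially strictly decreasing. Your route---removing the $\tau$ by re-running the contraction under (V3), and handling the bounded/unbounded dichotomy for $y_{\ep}/\ep$ in one stroke---is a clean alternative that avoids the separate $O(\ep)$ step; the trade-off is that it front-loads the refined $l_{\ep}$-estimate. One small point to tighten: in your Case~2 you must choose the index $i$ so that $|y_{\ep,i}|$ is comparable to $|y_{\ep}|$ along the subsequence (e.g.\ the largest component); otherwise the asserted lower bound $c\,\ep^{3}|y_{\ep}|^{m-1}$ on the left hand side, and the domination over the Taylor remainder $O(\ep^{3}|y_{\ep}|^{m})$, need not follow from $y_{\ep,i}/\ep\to\pm\infty$ alone.
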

\begin{proof}
To analyze the asymptotic behavior of $y_{\ep}$ with respect to $\ep$,
we apply the Pohozaev-type identity (\ref{eq: Pohozaev}) to $u=u_{\ep}$
with $\Om=B_{d}(y_{\ep})$, where $d\in(1,2)$ is chosen as in (\ref{eq: choose the radii}).
Note that $d$ is possibly dependent on $\ep$. We get
\begin{equation}
\int_{B_{d}(y_{\ep})}\frac{\pa V}{\pa x_{i}}(U_{\ep,y_{\ep}}+\var_{\ep})^{2}=:\sum_{i=1}^{3}I_{i}\label{eq: 2.1}
\end{equation}
with
\[
\begin{aligned} & I_{1}=\left(\ep^{2}a+\ep b\int_{\R^{3}}|\na u_{\ep}|^{2}\right)\int_{\pa B_{d}(y_{\ep})}\left(|\na u_{\ep}|^{2}\nu_{i}-2\frac{\pa u_{\ep}}{\pa\nu}\frac{\pa u_{\ep}}{\pa x_{i}}\right),\\
 & I_{2}=\int_{\pa B_{d}(y_{\ep})}Vu_{\ep}^{2}\nu_{i}\text{ and }I_{3}=-\frac{2}{p+1}\int_{\pa B_{d}(y_{\ep})}u_{\ep}^{p+1}\nu_{i}.
\end{aligned}
\]
We estimate each side of (\ref{eq: 2.1}) as follows.

From (\ref{eq: size of the solution}) we get
\[
\ep^{2}a+\ep b\int_{\R^{3}}|\na u_{\ep}|^{2}=O(\ep^{2}).
\]
Thus, from (\ref{eq: inequality on gradient}) we deduce $I_{1}=O(\|\var_{\ep}\|_{\ep}^{2})$.
Using similar arguments and choosing a suitable $d$ if necessary,
we also get $I_{2}=O(\|\var_{\ep}\|_{\ep}^{2})$ and $I_{3}=O(\|\var_{\ep}\|_{\ep}^{p+1})$.
Hence
\begin{equation}
\sum_{i=1}^{3}I_{i}=O(\|\var_{\ep}\|_{\ep}^{2}).\label{eq: estimate of RHS of 2.1}
\end{equation}

To estimate the left hand side of (\ref{eq: 2.1}), notice that
\begin{equation}
\int_{B_{d}(y_{\ep})}\frac{\pa V}{\pa x_{i}}(U_{\ep,y_{\ep}}+\var_{\ep})^{2}=\int_{B_{d}(y_{\ep})}\frac{\pa V}{\pa x_{i}}U_{\ep,y_{\ep}}^{2}+O(\ep^{\frac{3}{2}}\|\var_{\ep}\|_{\ep})+O(\|\var_{\ep}\|_{\ep}^{2}).\label{eq: 2.2}
\end{equation}
By the assumption (V3), we deduce, for each $i=1,2,3$,
\begin{equation}
\begin{aligned}\int_{B_{d}(y_{\ep})}\frac{\pa V}{\pa x_{i}}U_{\ep,y_{\ep}}^{2} & =mc_{i}\int_{B_{d}(y_{\ep})}|x_{i}|^{m-2}x_{i}U_{\ep,y_{\ep}}^{2}+O\left(\int_{B_{d}(y_{\ep})}|x|^{m}U_{\ep,y_{\ep}}^{2}\right)\\
 & =mc_{i}\ep^{3}\int_{B_{\frac{d}{\ep}}(0)}|\ep z_{i}+y_{\ep,i}|^{m-2}(\ep z_{i}+y_{\ep,i})+O\left(\ep^{3}\left(\ep^{m}+|y_{\ep}|^{m}\right)\right)\\
 & =mc_{i}\ep^{3}\int_{\R^{3}}|\ep z_{i}+y_{\ep,i}|^{m-2}(\ep z_{i}+y_{\ep,i})U^{2}+O\left(\ep^{3}\left(\ep^{m}+|y_{\ep}|^{m}\right)\right).
\end{aligned}
\label{eq: 2.3}
\end{equation}
We have used (\ref{eq: Properties of U}) in the last two lines in
the above. (\ref{eq: 2.2}) and (\ref{eq: 2.3}) gives
\begin{equation}
\begin{aligned}\int_{B_{d}(y_{\ep})}\frac{\pa V}{\pa x_{i}}(U_{\ep,y_{\ep}}+\var_{\ep})^{2} & =mc_{i}\ep^{3}\int_{\R^{3}}|\ep z_{i}+y_{\ep,i}|^{m-2}(\ep z_{i}+y_{\ep,i})U^{2}\\
 & \quad+O\left(\ep^{\frac{3}{2}}\|\var_{\ep}\|_{\ep}+\|\var_{\ep}\|_{\ep}^{2}+\ep^{3}\left(\ep^{m}+|y_{\ep}|^{m}\right)\right).
\end{aligned}
\label{eq: 2.4}
\end{equation}

Since $c_{i}\neq0$ by assumption (V3), combining (\ref{eq: 2.1})-(\ref{eq: 2.4})
we deduce
\[
\ep^{3}\int_{\R^{3}}|\ep z_{i}+y_{\ep,i}|^{m-2}(\ep z_{i}+y_{\ep,i})U^{2}=O\left(\ep^{\frac{3}{2}}\|\var_{\ep}\|_{\ep}+\|\var_{\ep}\|_{\ep}^{2}+\ep^{3}\left(\ep^{m}+|y_{\ep}|^{m}\right)\right).
\]
By Proposition \ref{prop: reduction map} and (V3),
\[
\|\var_{\ep}\|_{\ep}=O\left(\ep^{3/2}(\ep^{m-\tau}+|y_{\ep}|^{m(1-\tau)})\right).
\]
Thus,
\begin{equation}
\int_{\R^{3}}|\ep z_{i}+y_{\ep,i}|^{m-2}(\ep z_{i}+y_{\ep,i})U^{2}=O\left(\ep^{m-\tau}+|y_{\ep}|^{m(1-\tau)}\right).\label{eq: the 1st estimate}
\end{equation}

On the other hand, let $m^{\ast}=\min(m,2)$. We have
\begin{equation}
\begin{aligned}|y_{\ep,i}|^{m} & \le|\ep z_{i}+y_{\ep,i}|^{m}-m|\ep z_{i}+y_{\ep,i}|^{m-2}(\ep z_{i}+y_{\ep,i})\ep z_{i}\\
 & \quad+C\left(|\ep z_{i}+y_{\ep,i}|^{m-m^{\ast}}|\ep z_{i}|^{m^{\ast}}+|\ep z_{i}|^{m}\right)\\
 & \le m|\ep z_{i}+y_{\ep,i}|^{m-2}(\ep z_{i}+y_{\ep,i})y_{\ep,i}+C\left(|\ep z_{i}|^{m}+|y_{\ep,i}|^{m-m^{\ast}}|\ep y_{i}|^{m^{\ast}}\right),
\end{aligned}
\label{eq: elementary iinequ.}
\end{equation}
by the following elementary inequality: for any $e,f\in\R$ and $m>1$,
there holds
\[
\left||e+f|^{m}-|e|^{m}-m|e|^{m-2}ef\right|\le C\left(|e|^{m-m^{\ast}}|f|^{m^{\ast}}+|f|^{m}\right)
\]
for some $C>0$ depending only on $m$. So, multiplying (\ref{eq: elementary iinequ.})
by $U^{2}$ on both sides and integrate over $\R^{3}$. We get
\[
|y_{\ep,i}|^{m}\int_{\R^{3}}U^{2}\le m\int_{\R^{3}}|\ep z_{i}+y_{\ep,i}|^{m-2}(\ep z_{i}+y_{\ep,i})y_{\ep,i}U^{2}+O\left(\ep^{m}+|y_{\ep}|^{m-m^{\ast}}\ep^{m^{\ast}}\right)
\]
for each $i$. Applying (\ref{eq: the 1st estimate}) to the above
estimate yields
\[
|y_{\ep}|^{m}=O\left(\left(\ep^{m-\tau}+|y_{\ep}|^{m(1-\tau)}\right)|y_{\ep}|+\ep^{m}+|y_{\ep}|^{m-m^{\ast}}\ep^{m^{\ast}}\right).
\]
Recall that $m\tau<1$. Using $\ep$-Young inequality
\begin{eqnarray*}
XY\le\de X^{m}+\de^{-\frac{m}{m-1}}Y^{\frac{m}{m-1}}, &  & \forall\:\de,X,Y>0,
\end{eqnarray*}
we deduce
\[
|y_{\ep}|=O(\ep).
\]

We have to prove that $|y_{\ep}|=o(\ep)$. Assume, on the contrary,
that there exist $\ep_{k}\to0$ and $y_{\ep_{k}}\to0$ such that $y_{\ep_{k}}/\ep_{k}\to A\in\R^{3}$
with $A\neq0$. Then (\ref{eq: the 1st estimate}) gives
\[
\int_{\R^{3}}\left|z+\frac{y_{\ep_{k}}}{\ep_{k}}\right|^{m-2}\left(z_{i}+\frac{y_{\ep_{k}}}{\ep_{k}}\right)U^{2}=O(\ep^{m-\tau}),
\]
Taking limit in the above gives
\[
\int_{\R^{3}}\left|z+A\right|^{m-2}\left(z+A\right)U^{2}(z)=0.
\]
However, since $U=U(|z|)$ is strictly decreasing with respect to
$|z|$, we infer that $A=0$. We reach a contradiction. The proof
is complete.
\end{proof}
As a consequence of Proposition \ref{prop: rate of concentration}
and the assumption (V3), we infer that
\begin{equation}
\|\var_{\ep,y}\|_{\ep}=O\left(\ep^{\frac{3}{2}+m(1-\tau)}\right).\label{eq: improved estimate for the correction term}
\end{equation}
Here we can take $\tau$ so small that $m(1-\tau)>1$ since $m>1$.

\section{Proof of Theorem \ref{thm: uniqueness} \label{sec: Proof-of-uniqueness-Theorem }}

This section is devoted to proving Theorem \ref{thm: uniqueness}.
We use a contradiction argument as that of Cao, Li and Luo \cite{Cao-Li-Luo-2015}.
Assume $u_{\ep}^{(i)}=U_{\ep,y_{\ep}^{(i)}}+\var_{\ep}^{(i)}$, $i=1,2$,
are two distinct solutions derived as in Theorem \ref{thm: main result-existence}.
By (\ref{eq: decay estimates}), $u_{\ep}^{(i)}$ are bounded functions
in $\R^{3}$, $i=1,2$. Set
\[
\xi_{\ep}=\frac{u_{\ep}^{(1)}-u_{\ep}^{(2)}}{\|u_{\ep}^{(1)}-u_{\ep}^{(2)}\|_{L^{\wq}(\R^{3})}}
\]
and set
\[
\bar{\xi}_{\ep}(x)=\xi_{\ep}(\ep x+y_{\ep}^{(1)}).
\]
It is clear that
\[
\|\bar{\xi}_{\ep}\|_{L^{\wq}(\R^{3})}=1.
\]
Moreover, by the remark following (\ref{eq: decay estimates}), there
holds
\begin{eqnarray}
\bar{\xi}_{\ep}(x)\to0 &  & \text{as }|x|\to\wq\label{eq: vanishing at infinity}
\end{eqnarray}
uniformly with respect to sufficiently small $\ep>0$. We will reach
a contradiction by showing that $\|\bar{\xi}_{\ep}\|_{L^{\wq}(\R^{3})}\to0$
as $\ep\to0$. In view of (\ref{eq: vanishing at infinity}), it suffices
to show that for any fixed $R>0$,
\begin{eqnarray}
\|\bar{\xi}_{\ep}\|_{L^{\wq}(B_{R}(0))}\to0 &  & \text{as }\ep\to0.\label{eq: local uniform convergence}
\end{eqnarray}
To this end, we will establish a series of results. First we have

\begin{proposition}\label{prop: asym of normalized solutions}There
holds
\[
\|\xi_{\ep}\|_{\ep}=O(\ep^{3/2}).
\]
\end{proposition}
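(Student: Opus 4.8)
The plan is to work with the linear equation satisfied by the difference $w_\ep:=u_\ep^{(1)}-u_\ep^{(2)}$, so that $\xi_\ep=w_\ep/\|w_\ep\|_{L^\infty(\R^3)}$. Abbreviate $a_\ep^{(i)}=\ep^2 a+\ep b\int_{\R^3}|\na u_\ep^{(i)}|^2$. Subtracting the two copies of Eq.\ (\ref{eq: Kirchhoff}) and regrouping the nonlocal coefficients, $w_\ep$ solves
\[
-a_\ep^{(1)}\De w_\ep+V(x)w_\ep=p\,\mu_\ep\,w_\ep+\big(a_\ep^{(1)}-a_\ep^{(2)}\big)\De u_\ep^{(2)},
\]
where $\mu_\ep(x)=\int_0^1\big(s u_\ep^{(1)}+(1-s)u_\ep^{(2)}\big)^{p-1}\D s$ satisfies $0\le\mu_\ep\le C\big((u_\ep^{(1)})^{p-1}+(u_\ep^{(2)})^{p-1}\big)$, while the nonlocal factor equals $a_\ep^{(1)}-a_\ep^{(2)}=\ep b\int_{\R^3}\na\big(u_\ep^{(1)}+u_\ep^{(2)}\big)\cdot\na w_\ep$. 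Dividing by $\|w_\ep\|_{L^\infty}$ gives the corresponding equation for $\xi_\ep$.

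Next I would test the equation for $\xi_\ep$ against $\xi_\ep$ over $\R^3$; this is legitimate since all functions involved decay exponentially by (\ref{eq: decay estimates}). Integrating by parts, the left-hand side equals $a_\ep^{(1)}\int_{\R^3}|\na\xi_\ep|^2+\int_{\R^3}V\xi_\ep^2$, which is at least $\|\xi_\ep\|_\ep^2$ because $a_\ep^{(1)}\ge\ep^2 a$ and $V>0$. So it remains to bound the two right-hand terms by $O(\ep^3)$. The first, $p\int_{\R^3}\mu_\ep\xi_\ep^2$, is handled with $\|\xi_\ep\|_{L^\infty}=1$ and the exponential concentration: $\int\mu_\ep\xi_\ep^2\le\int\mu_\ep\le C\int\big((u_\ep^{(1)})^{p-1}+(u_\ep^{(2)})^{p-1}\big)$, and since $u_\ep^{(i)}(x)\le Ce^{-\eta|x-y_\ep^{(i)}|/\ep}$ a rescaling bounds this by $O(\ep^3)$.

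\textbf{The delicate step is the second term, carrying the nonlocal factor.} Estimating $a_\ep^{(1)}-a_\ep^{(2)}$ by Cauchy--Schwarz in terms of $\|\na\xi_\ep\|_{L^2}$ only yields $O(1)\|\xi_\ep\|_\ep^2$, which cannot be absorbed into the left-hand side. Instead one should integrate by parts in the nonlocal factor,
\[
\frac{a_\ep^{(1)}-a_\ep^{(2)}}{\|w_\ep\|_{L^\infty}}=-\ep b\int_{\R^3}\De\big(u_\ep^{(1)}+u_\ep^{(2)}\big)\,\xi_\ep,
\]
and estimate everything through $\|\xi_\ep\|_{L^\infty}=1$ and the $L^1$ bound $\int_{\R^3}|\De u_\ep^{(i)}|\le\big(\|V\|_{L^\infty}\int_{\R^3}u_\ep^{(i)}+\int_{\R^3}(u_\ep^{(i)})^p\big)/a_\ep^{(i)}=O(\ep)$, where the last equality uses $\int_{\R^3}u_\ep^{(i)}=O(\ep^3)$, $\int_{\R^3}(u_\ep^{(i)})^p=O(\ep^3)$ and $a_\ep^{(i)}\ge\ep^2 a$, all consequences of Eq.\ (\ref{eq: Kirchhoff}) and the exponential decay (\ref{eq: decay estimates}). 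This gives $|a_\ep^{(1)}-a_\ep^{(2)}|/\|w_\ep\|_{L^\infty}=O(\ep^2)$ and, likewise, $\big|\int_{\R^3}\De u_\ep^{(2)}\,\xi_\ep\big|=O(\ep)$, so the second term is also $O(\ep^3)$.

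Putting the two estimates together gives $\|\xi_\ep\|_\ep^2=O(\ep^3)$, which is the claim. I expect the nonlocal term to be the only real obstacle: the point is that the gradient pairings must be handled by transferring all derivatives onto the exponentially concentrated profiles $u_\ep^{(i)}$ — whose Laplacian, value and $p$-th power all have $L^1(\R^3)$ norm of order $\ep$ or $\ep^3$ — rather than by Cauchy--Schwarz in $L^2$, which would throw away the gain coming from concentration. The auxiliary facts $\int_{\R^3}|\na u_\ep^{(i)}|^2=O(\ep)$ and $\int_{\R^3}u_\ep^{(i)}=O(\ep^3)$ used above are immediate from (\ref{eq: size of the solution}) and (\ref{eq: decay estimates}).
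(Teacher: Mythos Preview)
Your argument is correct and takes a genuinely different route from the paper's. The paper writes down \emph{two} comparison equations --- one with the nonlocal coefficient $a_\ep^{(1)}$ paired with $\De u_\ep^{(2)}$ on the right, and the symmetric one with the roles of the indices swapped --- and then adds them. After testing the sum with $\xi_\ep$, the nonlocal right-hand side becomes the perfect square $-\ep b\bigl(\int_{\R^3}\na(u_\ep^{(1)}+u_\ep^{(2)})\cdot\na\xi_\ep\bigr)^2$, which has the favourable sign and is simply discarded. The remaining term $\int C_\ep\xi_\ep^2$ is then bounded by a H\"older--Sobolev chain as $O(\ep^{(p-1)/2})\|\xi_\ep\|_\ep^{(7-p)/3}$, and the conclusion follows from the resulting algebraic inequality. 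In contrast, you work with a single (asymmetric) comparison equation and handle the nonlocal term by integrating by parts onto $\De(u_\ep^{(1)}+u_\ep^{(2)})$, then using the pointwise equation and exponential decay to control $\|\De u_\ep^{(i)}\|_{L^1(\R^3)}=O(\ep)$. The paper's symmetrization trick is structurally cleaner --- the bad term vanishes by sign rather than by estimate --- but your approach yields $\|\xi_\ep\|_\ep^2\le O(\ep^3)$ directly, without the intermediate bootstrap, and makes explicit that the only inputs needed are $\|\xi_\ep\|_{L^\infty}=1$, the uniform exponential concentration of $u_\ep^{(i)}$, and $a_\ep^{(i)}\sim\ep^2$.
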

\begin{proof}
Since both $u_{\ep}^{(i)}$, $i=1,2$, are assumed to be solutions
to Eq. (\ref{eq: Kirchhoff}), we obtain that
\begin{equation}
\begin{aligned} & -\left(\ep^{2}a+\ep b\int_{\R^{3}}|\na u_{\ep}^{(1)}|^{2}\right)\De\xi_{\ep}+V\xi_{\ep}\\
 & \qquad=\ep b\left(\int_{\R^{3}}\na(u_{\ep}^{(1)}+u_{\ep}^{(2)})\cdot\na\xi_{\ep}\right)\De u_{\ep}^{(2)}+C_{\ep}(x)\xi_{\ep},
\end{aligned}
\label{eq: comparison eq. 1}
\end{equation}
and that
\begin{equation}
\begin{aligned} & -\left(\ep^{2}a+\ep b\int_{\R^{3}}|\na u_{\ep}^{(2)}|^{2}\right)\De\xi_{\ep}+V\xi_{\ep}\\
 & \qquad=\ep b\left(\int_{\R^{3}}\na(u_{\ep}^{(1)}+u_{\ep}^{(2)})\cdot\na\xi_{\ep}\right)\De u_{\ep}^{(1)}+C_{\ep}(x)\xi_{\ep},
\end{aligned}
\label{eq: comparison eq. 2}
\end{equation}
where
\[
C_{\ep}(x)=p\int_{0}^{1}\left(tu_{\ep}^{(1)}(x)+(1-t)u_{\ep}^{(2)}(x)\right)^{p-1}.
\]
Adding (\ref{eq: comparison eq. 1}) and (\ref{eq: comparison eq. 2})
together gives
\begin{equation}
\begin{aligned} & -\left(2\ep^{2}a+\ep b\int_{\R^{3}}|\na u_{\ep}^{(1)}|^{2}+|\na u_{\ep}^{(2)}|^{2}\right)\De\xi_{\ep}+2V\xi_{\ep}\\
 & \qquad=\ep b\left(\int_{\R^{3}}\na(u_{\ep}^{(1)}+u_{\ep}^{(2)})\cdot\na\xi_{\ep}\right)\De\left(u_{\ep}^{(1)}+u_{\ep}^{(2)}\right)+2C_{\ep}(x)\xi_{\ep}.
\end{aligned}
\label{eq: symmetric comparision eq.}
\end{equation}
Multiply $\xi_{\ep}$ on both sides of (\ref{eq: symmetric comparision eq.})
and integrate over $\R^{3}$. By throwing away the terms containing
$b$, we get
\[
\|\xi_{\ep}\|_{\ep}^{2}\le\int_{\R^{3}}C_{\ep}\xi_{\ep}^{2}\D x.
\]
On the other hand, note that $C_{\ep}\le C\sum_{i=1}^{2}(u_{\ep}^{(i)})^{p-1}$.
This implies
\[
\begin{aligned}\int_{\R^{3}}C_{\ep}\xi_{\ep}^{2} & \le C\sum_{i=1}^{2}\int_{\R^{3}}\left(u_{\ep}^{(i)}\right)^{p-1}\xi_{\ep}^{2}\\
 & \le C\sum_{i=1}^{2}\left(\int_{\R^{3}}\left(u_{\ep}^{(i)}\right)^{6}\right)^{\frac{p-1}{6}}\left(\int_{\R^{3}}\left(\xi_{\ep}^{2}\right)^{\frac{6}{7-p}}\right)^{\frac{7-p}{6}}\\
 & \le C\sum_{i=1}^{2}\|\na u_{\ep}^{(i)}\|_{L^{2}(\R^{3})}^{p-1}\left(\int_{\R^{3}}\xi_{\ep}^{2}\right)^{\frac{7-p}{6}}\\
 & =O(\ep^{\frac{p-1}{2}})\|\xi_{\ep}\|_{\ep}^{(7-p)/3}.
\end{aligned}
\]
In the last inequality we have used the fact that $\|\xi_{\ep}\|_{L^{\wq}(\R^{3})}=1$
and (\ref{eq: size of the solution}).

Therefore,
\[
\|\xi_{\ep}\|_{\ep}^{2}=O(\ep^{\frac{p-1}{2}})\|\xi_{\ep}\|_{\ep}^{(7-p)/3},
\]
which implies the desired estimate. The proof is complete.
\end{proof}
Next we study the asymptotic behavior of $\bar{\xi}_{\ep}$.

\begin{proposition}\label{prop: convergence of xi_epsilon} Let $\bar{\xi}_{\ep}=\xi_{\ep}(\ep x+y_{\ep}^{(1)})$.
There exist $d_{i}\in\R$, $i=1,2,3$, such that (up to a subsequence)
\begin{eqnarray*}
\bar{\xi}_{\ep}\to\sum_{i=1}^{3}d_{i}\pa_{x_{i}}U &  & \text{in }C_{\loc}^{1}(\R^{3})
\end{eqnarray*}
as $\ep\to0$.\end{proposition}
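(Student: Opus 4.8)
The plan is to rescale everything around $y_\ep^{(1)}$, extract a subsequential limit of $\bar{\xi}_\ep$, identify the limiting equation as $\L\bar{\xi}_0=0$, and then invoke the nondegeneracy part of Theorem \ref{prop: LPX-2016} to conclude $\bar{\xi}_0\in\span\{\pa_{x_1}U,\pa_{x_2}U,\pa_{x_3}U\}$.

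First I would introduce $\bar{u}_\ep^{(i)}(x)=u_\ep^{(i)}(\ep x+y_\ep^{(1)})$, $i=1,2$. Writing $u_\ep^{(i)}=U_{\ep,y_\ep^{(i)}}+\var_\ep^{(i)}$ and using $|y_\ep^{(i)}|=o(\ep)$ (Proposition \ref{prop: rate of concentration}) together with continuity of translations on $H^1(\R^3)$ and $\|\var_\ep^{(i)}\|_\ep=o(\ep^{3/2})$, one gets $\bar{u}_\ep^{(i)}\to U$ strongly in $H^1(\R^3)$; since each $\bar{u}_\ep^{(i)}$ solves a uniformly elliptic equation with coefficients bounded in $L^\wq$ and is itself bounded in $L^\wq$ (by (\ref{eq: decay estimates}) and the bounds established right after it), elliptic regularity upgrades this to $\bar{u}_\ep^{(i)}\to U$ in $C^1_{\loc}(\R^3)$, and moreover $\bar{c}_\ep^{(i)}:=a+b\int_{\R^3}|\na\bar{u}_\ep^{(i)}|^2\to a+b\int_{\R^3}|\na U|^2=:c$, the very constant entering (\ref{eq: L plus}). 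Rescaling (\ref{eq: comparison eq. 2}) by $x\mapsto\ep x+y_\ep^{(1)}$ (all powers of $\ep$ match up, each gradient contributing $\ep^{-1}$ and each volume element $\ep^3$), $\bar{\xi}_\ep$ satisfies
\[
-\bar{c}_\ep^{(2)}\De\bar{\xi}_\ep+\bar{V}_\ep\bar{\xi}_\ep
=b\left(\int_{\R^3}\na(\bar{u}_\ep^{(1)}+\bar{u}_\ep^{(2)})\cdot\na\bar{\xi}_\ep\right)\De\bar{u}_\ep^{(1)}+\bar{C}_\ep\bar{\xi}_\ep,
\]
where $\bar{V}_\ep(x)=V(\ep x+y_\ep^{(1)})$ and $\bar{C}_\ep(x)=p\int_0^1\big(t\bar{u}_\ep^{(1)}(x)+(1-t)\bar{u}_\ep^{(2)}(x)\big)^{p-1}\,\D t$.

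Next I would extract a limit. By Proposition \ref{prop: asym of normalized solutions} and the same rescaling, $\{\bar{\xi}_\ep\}$ is bounded in $H^1(\R^3)$; combined with $\|\bar{\xi}_\ep\|_{L^\wq(\R^3)}=1$, the uniform bounds on $\bar{c}_\ep^{(2)}$ (bounded below), $\bar{V}_\ep$, $\bar{C}_\ep$, on $\|\De\bar{u}_\ep^{(1)}\|_{L^\wq}$ from (\ref{eq: boundedness estimate for laplacian}), and the Cauchy--Schwarz bound on the scalar coefficient $\int_{\R^3}\na(\bar{u}_\ep^{(1)}+\bar{u}_\ep^{(2)})\cdot\na\bar{\xi}_\ep$, standard interior elliptic estimates give uniform $C^{1,\be}_{\loc}$ bounds on $\bar{\xi}_\ep$ for some $\be\in(0,1)$. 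Hence, along a subsequence, $\bar{\xi}_\ep\to\bar{\xi}_0$ in $C^1_{\loc}(\R^3)$ and weakly in $H^1(\R^3)$, with $\bar{\xi}_0\in H^1(\R^3)$. To pass to the limit: $\bar{V}_\ep\to1$ and $\bar{C}_\ep\to pU^{p-1}$ locally uniformly; $\De\bar{u}_\ep^{(1)}\to\De U$ locally uniformly (from $\De\bar{u}_\ep^{(1)}=(\bar{c}_\ep^{(1)})^{-1}(\bar{V}_\ep\bar{u}_\ep^{(1)}-(\bar{u}_\ep^{(1)})^p)$); and, crucially,
\[
\int_{\R^3}\na(\bar{u}_\ep^{(1)}+\bar{u}_\ep^{(2)})\cdot\na\bar{\xi}_\ep\ \longrightarrow\ 2\int_{\R^3}\na U\cdot\na\bar{\xi}_0,
\]
because $\na\bar{u}_\ep^{(i)}\to\na U$ strongly in $L^2(\R^3)$ while $\na\bar{\xi}_\ep\wto\na\bar{\xi}_0$ weakly in $L^2(\R^3)$. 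Therefore $\bar{\xi}_0$ is a (classical, by regularity) solution of
\[
-c\De\bar{\xi}_0+\bar{\xi}_0-pU^{p-1}\bar{\xi}_0=2b\left(\int_{\R^3}\na U\cdot\na\bar{\xi}_0\right)\De U,
\]
that is, $\L\bar{\xi}_0=0$ with $\L$ as in (\ref{eq: L plus}), since $c=a+b\int_{\R^3}|\na U|^2$. The nondegeneracy part of Theorem \ref{prop: LPX-2016} then produces $d_i\in\R$ with $\bar{\xi}_0=\sum_{i=1}^3 d_i\pa_{x_i}U$, which is the assertion (independence of the subsequence is not claimed here).

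The main obstacle is the nonlocal term: one must simultaneously (i) track the $\ep$-scaling so that $\ep b\big(\int\na(u_\ep^{(1)}+u_\ep^{(2)})\cdot\na\xi_\ep\big)\De u_\ep^{(1)}$ survives in the limit with the correct coefficient $2b$, and (ii) pass to the limit in the scalar factor $\int\na(\bar{u}_\ep^{(1)}+\bar{u}_\ep^{(2)})\cdot\na\bar{\xi}_\ep$ in spite of having only weak $H^1$-convergence of $\bar{\xi}_\ep$ — this is precisely where the strong $H^1$-convergence $\bar{u}_\ep^{(i)}\to U$ is used, which in turn rests on $|y_\ep^{(i)}|=o(\ep)$ (Proposition \ref{prop: rate of concentration}) and $\|\var_\ep^{(i)}\|_\ep=o(\ep^{3/2})$. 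One must also check that $\bar{c}_\ep^{(2)}$ converges to exactly the constant $c=a+b\|\na U\|_2^2$ appearing in $\L$; the remaining steps are routine compactness and elliptic regularity.
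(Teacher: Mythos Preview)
Your proposal is correct and follows essentially the same route as the paper: rescale the difference equation, use Proposition \ref{prop: asym of normalized solutions} and (\ref{eq: boundedness estimate for laplacian}) to get uniform $C^{1,\beta}_{\loc}$ bounds on $\bar{\xi}_\ep$, extract a $C^1_{\loc}$ limit, identify the limit equation as $\L\bar{\xi}_0=0$, and apply Theorem \ref{prop: LPX-2016}. The only cosmetic differences are that the paper rescales (\ref{eq: comparison eq. 1}) rather than (\ref{eq: comparison eq. 2}) (so $\De\bar{u}_\ep^{(2)}$ appears instead of $\De\bar{u}_\ep^{(1)}$), and that your passage to the limit in the nonlocal scalar factor via the strong--weak $L^2$ pairing $\na\bar{u}_\ep^{(i)}\to\na U$ strongly, $\na\bar{\xi}_\ep\rightharpoonup\na\bar{\xi}_0$ weakly, is a slightly cleaner packaging of what the paper does by explicit decomposition in the lines leading to (\ref{eq: 5.5}).
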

\begin{proof}
It is straightforward to deduce from (\ref{eq: comparison eq. 1})
that $\bar{\xi}_{\ep}$ solves
\begin{equation}
\begin{aligned} & -\left(a+\ep^{-1}b\int_{\R^{3}}|\na u_{\ep}^{(1)}|^{2}\right)\De\bar{\xi}_{\ep}+V(\ep x+y_{\ep}^{(1)})\bar{\xi}_{\ep}\\
 & \qquad=\ep^{-1}b\left(\int_{\R^{3}}\na(u_{\ep}^{(1)}+u_{\ep}^{(2)})\cdot\na\xi_{\ep}\right)\De\left(u_{\ep}^{(2)}(\ep x+y_{\ep}^{(1)})\right)+C_{\ep}(\ep x+y_{\ep}^{(1)})\bar{\xi}_{\ep}.
\end{aligned}
\label{eq: normalized equa.}
\end{equation}
For convenience, we introduce
\begin{eqnarray*}
\bar{u}_{\ep}^{(i)}(x)=u_{\ep}^{(i)}(\ep x+y_{\ep}^{(1)}) & \text{and} & \bar{\var}_{\ep}^{(i)}=\var_{\ep}^{(i)}(\ep x+y_{\ep}^{(1)})
\end{eqnarray*}
for $i=1,2$. Then, we have
\begin{equation}
\ep^{-1}\int_{\R^{3}}|\na u_{\ep}^{(1)}|^{2}=\int_{\R^{3}}|\na\bar{u}_{\ep}^{(1)}|^{2}\label{eq: 5.0}
\end{equation}
and
\begin{equation}
\ep^{-1}b\left(\int_{\R^{3}}\na(u_{\ep}^{(1)}+u_{\ep}^{(2)})\cdot\na\xi_{\ep}\right)=b\int_{\R^{3}}\na\left(\bar{u}_{\ep}^{(1)}+\bar{u}_{\ep}^{(2)}\right)\cdot\na\bar{\xi}_{\ep},\label{eq: 5.1}
\end{equation}
which are uniformly bounded for $\ep$ by (\ref{eq: uniform boundedness})
and by (\ref{eq: 5.3}) below. Moreover, we have
\begin{equation}
\int_{\R^{3}}\left|\na\bar{\var}_{\ep}^{(i)}\right|^{2}=\ep^{-3}O\left(\left\Vert \bar{\var}_{\ep}^{(i)}\right\Vert _{\ep}^{2}\right)=O(\ep^{2m(1-\tau)})\label{eq: 5.2}
\end{equation}
by (\ref{eq: improved estimate for the correction term}), and
\begin{equation}
\int_{\R^{3}}\left|\na\bar{\xi}_{\ep}\right|^{2}=\ep^{-1}\int_{\R^{3}}|\na\xi|^{2}=O(1)\label{eq: 5.3}
\end{equation}
by Proposition \ref{prop: asym of normalized solutions}.

Thus, in view of $\|\bar{\xi}_{\ep}\|_{L^{\wq}(\R^{3})}=1$ and (\ref{eq: boundedness estimate for laplacian})
and estimates in the above, the elliptic regularity theory implies
that $\bar{\xi}_{\ep}$ is locally uniformly bounded with respect
to $\ep$ in $C_{\loc}^{1,\be}(\R^{3})$ for some $\be\in(0,1)$.
As a consequence, we assume (up to a subsequence) that
\begin{eqnarray*}
\bar{\xi}_{\ep}\to\bar{\xi} &  & \text{in }C_{\loc}^{1}(\R^{3}).
\end{eqnarray*}
We claim that $\bar{\xi}\in{\rm Ker}\L$, that is,
\begin{equation}
-\left(a+b\int_{\R^{3}}|\na U|^{2}\right)\De\bar{\xi}-2b\left(\int_{\R^{3}}\na U\cdot\na\bar{\xi}\right)\De U+\bar{\xi}=pU^{p-1}\bar{\xi}.\label{eq: blow-up equations}
\end{equation}
Then $\bar{\xi}=\sum_{i=1}^{3}d_{i}\pa_{x_{i}}U$ follows from Theorem
\ref{prop: LPX-2016} for some $d_{i}\in\R$ ($i=1,2,3$), and thus
Proposition \ref{prop: convergence of xi_epsilon} is proved.

To deduce (\ref{eq: blow-up equations}), we only need to show that
(\ref{eq: blow-up equations}) is the limiting equation of Eq. (\ref{eq: normalized equa.}).
It follows from (\ref{eq: 5.0}) and (\ref{eq: 5.2}) that
\begin{equation}
\begin{aligned}\ep^{-1}b\int_{\R^{3}}|\na u_{\ep}^{(1)}|^{2}-b\int_{\R^{3}}|\na U|^{2} & =b\int_{\R^{3}}\left(|\na\bar{u}_{\ep}^{(1)}|^{2}-|\na U|^{2}\right)\\
 & =b\int_{\R^{3}}\left(\left|\na U+\na\bar{\var}_{\ep}^{(1)}\right|^{2}-|\na U|^{2}\right)\\
 & =O(\ep^{m(1-\tau)})\to0.
\end{aligned}
\label{eq: 5.4}
\end{equation}
Similarly, we deduce from (\ref{eq: 5.1}) (\ref{eq: 5.2}) and (\ref{eq: 5.3})
that
\[
\begin{aligned}\int_{\R^{3}}\na\left(\bar{u}_{\ep}^{(1)}+\bar{u}_{\ep}^{(2)}-2U\right)\cdot\na\bar{\xi}_{\ep} & =\int_{\R^{3}}\na\left(U\left(x+(y_{\ep}^{(1)}-y_{\ep}^{(2)})/\ep\right)-U\right)\cdot\na\bar{\xi}_{\ep}\\
 & \quad+\int_{\R^{3}}\na\left(\bar{\var}_{\ep}^{(1)}+\bar{\var}_{\ep}^{(2)}\right)\cdot\na\bar{\xi}_{\ep}\\
 & =o(1),
\end{aligned}
\]
and that, for any $\Phi\in C_{0}^{\wq}(\R^{3})$,
\[
\begin{aligned}\int_{\R^{3}}\na\left(\bar{u}_{\ep}^{(2)}-U\right)\cdot\na\Phi & =\int_{\R^{3}}\na\left(U\left(x+(y_{\ep}^{(1)}-y_{\ep}^{(2)})/\ep\right)-U\right)\cdot\na\Phi\\
 & \quad+\int_{\R^{3}}\na\bar{\var}_{\ep}^{(2)}\cdot\na\Phi\\
 & \to0.
\end{aligned}
\]
Here, we have used Proposition \ref{prop: rate of concentration},
which implies $(y_{\ep}^{(1)}-y_{\ep}^{(2)})/\ep\to0$ as $\ep\to0$.
Combining the above two formulas and (\ref{eq: 5.1}) and $\bar{\xi}_{\ep}\to\bar{\xi}$
in $C_{\loc}^{1}(\R^{3})$, we conclude that
\begin{equation}
\frac{b}{\ep}\left(\int_{\R^{3}}\na(u_{\ep}^{(1)}+u_{\ep}^{(2)})\cdot\na\xi_{\ep}\right)\De\left(u_{\ep}^{(2)}(\ep x+y_{\ep}^{(1)})\right)\to2b\left(\int_{\R^{3}}\na U\cdot\na\bar{\xi}\right)\De U\label{eq: 5.5}
\end{equation}
in $H^{-1}(\R^{3})$.

Also, similar to Lemma 3.2 of Cao, Li and Luo \cite{Cao-Li-Luo-2015},
we have for any $\Phi\in C_{0}^{\wq}(\R^{3})$,
\begin{equation}
\int_{\R^{3}}C_{\ep}(\ep x+y_{\ep}^{(1)})\bar{\xi}_{\ep}\Phi-p\int_{\R^{3}}U^{p-1}\bar{\xi}_{\ep}\Phi=o(1).\label{eq: 5.6}
\end{equation}

Finally, combining (\ref{eq: 5.4}) (\ref{eq: 5.5}) (\ref{eq: 5.6}),
we obtain (\ref{eq: blow-up equations}). The proof is complete.
\end{proof}
Now we prove (\ref{eq: local uniform convergence}) by showing the
following lemma.

\begin{lemma} \label{lem: vanishing limit function}Let $d_{i}$
be defined as in Proposition \ref{prop: convergence of xi_epsilon}.
Then
\begin{eqnarray*}
d_{i}=0 &  & \text{for }i=1,2,3.
\end{eqnarray*}
 \end{lemma}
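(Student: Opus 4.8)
The plan is to feed the $C_{\loc}^{1}$-limit of Proposition~\ref{prop: convergence of xi_epsilon} into a subtracted local Pohozaev identity and read off each $d_i$ separately. Fix $i\in\{1,2,3\}$. Apply the identity (\ref{eq: Pohozaev}) on $\Om=B_d(y_\ep^{(1)})$ to $u=u_\ep^{(1)}$ and to $u=u_\ep^{(2)}$ and subtract, where the radius $d=d_\ep\in(1,2)$ is chosen by the averaging argument preceding Proposition~\ref{prop: rate of concentration} (cf.\ (\ref{eq: choose the radii})), applied to a suitable combination of $\ep^{2}|\na\var_\ep^{(j)}|^{2}$, $(\var_\ep^{(j)})^{2}$, $\ep^{2}|\na\xi_\ep|^{2}$, $\xi_\ep^{2}$, so that $\int_{\pa B_d}((\var_\ep^{(j)})^{2}+\ep^{2}|\na\var_\ep^{(j)}|^{2})\le C\|\var_\ep^{(j)}\|_\ep^{2}$ and $\int_{\pa B_d}(\xi_\ep^{2}+\ep^{2}|\na\xi_\ep|^{2})\le C\|\xi_\ep\|_\ep^{2}$. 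Dividing by $\|u_\ep^{(1)}-u_\ep^{(2)}\|_{L^{\wq}(\R^{3})}$, using $u_\ep^{(1)}-u_\ep^{(2)}=\|u_\ep^{(1)}-u_\ep^{(2)}\|_{L^{\wq}}\xi_\ep$ and the factorizations $(u_\ep^{(1)})^{2}-(u_\ep^{(2)})^{2}=\|u_\ep^{(1)}-u_\ep^{(2)}\|_{L^{\wq}}(u_\ep^{(1)}+u_\ep^{(2)})\xi_\ep$, $|\na u_\ep^{(1)}|^{2}-|\na u_\ep^{(2)}|^{2}=\|u_\ep^{(1)}-u_\ep^{(2)}\|_{L^{\wq}}\na(u_\ep^{(1)}+u_\ep^{(2)})\cdot\na\xi_\ep$, and the analogous difference-quotient identities for $\pa_\nu u_\ep^{(1)}\pa_{x_i}u_\ep^{(1)}-\pa_\nu u_\ep^{(2)}\pa_{x_i}u_\ep^{(2)}$, $(u_\ep^{(1)})^{p+1}-(u_\ep^{(2)})^{p+1}$ and the nonlocal coefficient $\ep^{2}a+\ep b\int_{\R^{3}}|\na u_\ep^{(j)}|^{2}$, one reduces to an identity of the form
$$\int_{B_d(y_\ep^{(1)})}\frac{\pa V}{\pa x_i}\big(u_\ep^{(1)}+u_\ep^{(2)}\big)\xi_\ep=\mathcal{B}_\ep,$$
where $\mathcal{B}_\ep$ collects boundary integrals over $\pa B_d(y_\ep^{(1)})$ involving $\na(u_\ep^{(1)}+u_\ep^{(2)})$, $\na\xi_\ep$, $u_\ep^{(1)}+u_\ep^{(2)}$, $\xi_\ep$, weighted by $\ep^{2}a+\ep b\int_{\R^{3}}|\na u_\ep^{(j)}|^{2}$ where appropriate.

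The first step evaluates the left side. Rescale $x=\ep z+y_\ep^{(1)}$, $\bar u_\ep^{(j)}(z)=u_\ep^{(j)}(\ep z+y_\ep^{(1)})$, $\bar\xi_\ep(z)=\xi_\ep(\ep z+y_\ep^{(1)})$; up to an $O(e^{-c/\ep})$ error (the part with $|x|\ge\de$, on which $u_\ep^{(j)}$ is exponentially small by (\ref{eq: decay estimates})) the left side equals $\ep^{3}\int_{\R^{3}}\frac{\pa V}{\pa x_i}(\ep z+y_\ep^{(1)})(\bar u_\ep^{(1)}+\bar u_\ep^{(2)})\bar\xi_\ep$. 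By (V3) and Proposition~\ref{prop: rate of concentration} (which gives $|y_\ep^{(1)}|=o(\ep)$, so $y_{\ep,i}^{(1)}/\ep\to0$), $\frac{\pa V}{\pa x_i}(\ep z+y_\ep^{(1)})=mc_i\ep^{m-1}|z_i+y_{\ep,i}^{(1)}/\ep|^{m-2}(z_i+y_{\ep,i}^{(1)}/\ep)+O\big(\ep^{m}(1+|z|^{m})\big)$. Using the uniform exponential decay of $\bar u_\ep^{(j)}$ (from (\ref{eq: decay estimates}) and $|y_\ep^{(1)}-y_\ep^{(2)}|/\ep\to0$), the bound $|\bar\xi_\ep|\le1$, the convergences $\bar u_\ep^{(1)}+\bar u_\ep^{(2)}\to2U$ and $\bar\xi_\ep\to\sum_{j}d_j\pa_{z_j}U$ in $C_{\loc}^{1}$ (Proposition~\ref{prop: convergence of xi_epsilon}), and dominated convergence with $L^{1}$-majorant $C(1+|z|)^{m-1}e^{-c|z|}$, the left side becomes $2mc_i\ep^{m+2}\sum_{j}d_j\int_{\R^{3}}|z_i|^{m-2}z_i\,U\,\pa_{z_j}U+o(\ep^{m+2})$. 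Since $U$ is radial (hence even in each variable) and $|z_i|^{m-2}z_i$ is odd in $z_i$, the integral vanishes for $j\neq i$, while integration by parts gives $\int_{\R^{3}}|z_i|^{m-2}z_i\,U\,\pa_{z_i}U=-\tfrac{m-1}{2}\int_{\R^{3}}|z_i|^{m-2}U^{2}$, which is finite and strictly positive because $m>1$. Thus the left side equals $-m(m-1)c_i d_i\big(\int_{\R^{3}}|z_i|^{m-2}U^{2}\big)\ep^{m+2}+o(\ep^{m+2})$.

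The main step is to show $\mathcal{B}_\ep=o(\ep^{m+2})$. On $\pa B_d(y_\ep^{(1)})$ the bumps $U_{\ep,y_\ep^{(1)}},U_{\ep,y_\ep^{(2)}}$ and their gradients are $O(e^{-c/\ep})$ by (\ref{eq: Properties of U}) (using $d>1$ and $|y_\ep^{(1)}-y_\ep^{(2)}|=o(\ep)$), and by (\ref{eq: size of the solution}) we have $\ep^{2}a+\ep b\int_{\R^{3}}|\na u_\ep^{(j)}|^{2}=O(\ep^{2})$ and $\int_{\R^{3}}|\na u_\ep^{(j)}|^{2}=O(\ep)$. Hence, replacing $u_\ep^{(j)}$ by $\var_\ep^{(j)}$ on $\pa B_d(y_\ep^{(1)})$ up to exponentially small errors and applying Cauchy--Schwarz on $\pa B_d(y_\ep^{(1)})$, every boundary integral is bounded by a product of the form $\ep^{k}\big(\int_{\pa B_d}(|\na\var_\ep^{(j)}|^{2}+|\na\xi_\ep|^{2})\big)^{1/2}\big(\int_{\pa B_d}((\var_\ep^{(j)})^{2}+\xi_\ep^{2})\big)^{1/2}$ plus exponentially small terms. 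Using Proposition~\ref{prop: asym of normalized solutions} ($\|\xi_\ep\|_\ep=O(\ep^{3/2})$), the estimate (\ref{eq: improved estimate for the correction term}) ($\|\var_\ep^{(j)}\|_\ep=O(\ep^{3/2+m(1-\tau)})$), and the choice of $d$, one obtains $\mathcal{B}_\ep=O(\ep^{3+m(1-\tau)})+O(e^{-c/\ep})$; the $u^{p+1}$-boundary term is itself exponentially small since $u_\ep^{(j)}\le Ce^{-c/\ep}$ on $\pa B_d(y_\ep^{(1)})$ while $|\xi_\ep|\le1$. Because $m\tau<1$, one has $3+m(1-\tau)>m+2$, so indeed $\mathcal{B}_\ep=o(\ep^{m+2})$.

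Combining the two steps, dividing by $\ep^{m+2}$ and letting $\ep\to0$, we get $m(m-1)c_i\big(\int_{\R^{3}}|z_i|^{m-2}U^{2}\big)d_i=0$; since $m>1$ and $c_i\neq0$ by (V3), this forces $d_i=0$, for each $i=1,2,3$. The main obstacle is the $\ep$-power bookkeeping in $\mathcal{B}_\ep$: it is precisely the improved correction estimate (\ref{eq: improved estimate for the correction term}) — which in turn rests on the sharp concentration rate $|y_\ep|=o(\ep)$ of Proposition~\ref{prop: rate of concentration} — that renders all boundary contributions genuinely of lower order than the main term $\ep^{m+2}$, and the nonlocal $b$-terms must be opened up by the same difference-quotient trick and controlled via (\ref{eq: size of the solution}). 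The remaining ingredients (the parity argument for the limiting integrals, the averaging choice of $d$, and the rescalings) are routine.
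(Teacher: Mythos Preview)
Your proof is correct and follows essentially the same route as the paper: subtract the local Pohozaev identities for $u_\ep^{(1)}$ and $u_\ep^{(2)}$ on $B_d(y_\ep^{(1)})$, show the boundary contribution is $O(\ep^{3+m(1-\tau)})=o(\ep^{m+2})$, and identify the volume term as $D_i d_i\,\ep^{m+2}+o(\ep^{m+2})$ with $D_i\neq 0$. You are in fact more careful than the paper on one point: you explicitly factor the gradient boundary terms and the nonlocal coefficient through $\xi_\ep$ before estimating (via Cauchy--Schwarz and the slice choice of $d$), whereas the paper leaves those two terms unfactored in its displayed identity and bounds each separately---your handling is the cleaner way to justify the $O(\ep^{3+m(1-\tau)})$ bound after division by $\|u_\ep^{(1)}-u_\ep^{(2)}\|_{L^\infty}$.
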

\begin{proof}
We use the Pohozaev-type identity (\ref{eq: Pohozaev}) to prove this
lemma. Apply (\ref{eq: Pohozaev}) to $u_{\ep}^{(1)}$ and $u_{\ep}^{(2)}$
with $\Om=B_{d}(y_{\ep}^{(1)})$, where $d$ is chosen in the same
way as that of Proposition \ref{prop: rate of concentration}. We
obtain
\[
\begin{aligned} & \int_{B_{d}(y_{\ep}^{(1)})}\frac{\pa V}{\pa x_{i}}\left(\left(u_{\ep}^{(1)}\right)^{2}-\left(u_{\ep}^{(2)}\right)^{2}\right)\\
 & =\left(\ep^{2}a+\ep b\int_{\R^{3}}|\na u_{\ep}^{(1)}|^{2}\right)\int_{\pa B_{d}(y_{\ep}^{(1)})}\left(|\na u_{\ep}^{(1)}|^{2}\nu_{i}-2\frac{\pa u_{\ep}^{(1)}}{\pa\nu}\frac{\pa u_{\ep}^{(1)}}{\pa x_{i}}\right)\\
 & \quad-\left(\ep^{2}a+\ep b\int_{\R^{3}}|\na u_{\ep}^{(2)}|^{2}\right)\int_{\pa B_{d}(y_{\ep}^{(1)})}\left(|\na u_{\ep}^{(2)}|^{2}\nu_{i}-2\frac{\pa u_{\ep}^{(2)}}{\pa\nu}\frac{\pa u_{\ep}^{(2)}}{\pa x_{i}}\right)\\
 & \quad+\int_{\pa B_{d}(y_{\ep}^{(1)})}V(x)\left(\left(u_{\ep}^{(1)}\right)^{2}-\left(u_{\ep}^{(2)}\right)^{2}\right)\nu_{i}\\
 & \quad-\frac{2}{p+1}\int_{\pa B_{d}(y_{\ep}^{(1)})}\left(\left(u_{\ep}^{(1)}\right)^{p+1}-\left(u_{\ep}^{(2)}\right)^{p+1}\right)\nu_{i}.
\end{aligned}
\]
In terms of $\xi_{\ep}$, we get
\begin{equation}
\begin{aligned} & \int_{B_{d}(y_{\ep}^{(1)})}\frac{\pa V}{\pa x_{i}}\left(u_{\ep}^{(1)}+u_{\ep}^{(2)}\right)\xi_{\ep}\\
 & =\left(\ep^{2}a+\ep b\int_{\R^{3}}|\na u_{\ep}^{(1)}|^{2}\right)\int_{\pa B_{d}(y_{\ep}^{(1)})}\left(|\na u_{\ep}^{(1)}|^{2}\nu_{i}-2\frac{\pa u_{\ep}^{(1)}}{\pa\nu}\frac{\pa u_{\ep}^{(1)}}{\pa x_{i}}\right)\\
 & \quad-\left(\ep^{2}a+\ep b\int_{\R^{3}}|\na u_{\ep}^{(2)}|^{2}\right)\int_{\pa B_{d}(y_{\ep}^{(1)})}\left(|\na u_{\ep}^{(2)}|^{2}\nu_{i}-2\frac{\pa u_{\ep}^{(2)}}{\pa\nu}\frac{\pa u_{\ep}^{(2)}}{\pa x_{i}}\right)\\
 & \quad+\int_{\pa B_{d}(y_{\ep}^{(1)})}V\left(u_{\ep}^{(1)}+u_{\ep}^{(2)}\right)\xi_{\ep}\nu_{i}-2\int_{\pa B_{d}(y_{\ep}^{(1)})}A_{\ep}\xi_{\ep}\nu_{i},
\end{aligned}
\label{eq: 12}
\end{equation}
where $A_{\ep}(x)=\int_{0}^{1}(tu_{\ep}^{(1)}(x)+(1-t)u_{\ep}^{(2)}(x))^{p}$.

We estimate (\ref{eq: 12}) term by term. Note that
\[
\ep^{2}a+\ep b\int_{\R^{3}}|\na u_{\ep}^{(i)}|^{2}=O(\ep^{2})
\]
 holds by (\ref{eq: size of the solution}) for each $i=1,2$. Moreover,
by similar arguments as that of Proposition \ref{prop: rate of concentration},
we have
\[
\int_{\pa B_{d}(y_{\ep}^{(1)})}|\na u_{\ep}^{(i)}|^{2}=O(\|\na\var_{\ep}^{(i)}\|_{L^{2}(\R^{3})}^{2}).
\]
Thus, by (\ref{eq: improved estimate for the correction term}),
\[
\begin{aligned} & \sum_{i=1}^{2}\left(\ep^{2}a+\ep b\int_{\R^{3}}|\na u_{\ep}^{(i)}|^{2}\right)\int_{\pa B_{d}(y_{\ep}^{(1)})}\left||\na u_{\ep}^{(i)}|^{2}\nu_{i}-2\frac{\pa u_{\ep}^{(i)}}{\pa\nu}\frac{\pa u_{\ep}^{(i)}}{\pa x_{i}}\right|\\
 & \quad=\sum_{i=1}^{2}O\left(\ep^{2}\|\|\na\var_{\ep}^{(i)}\|_{L^{2}(\R^{3})}^{2}\right)\\
 & \quad=O(\ep^{3+2m(1-\tau)}).
\end{aligned}
\]
Also, similar to that of Cao, Li and Luo \cite{Cao-Li-Luo-2015},
we have
\[
\int_{\pa B_{d}(y_{\ep}^{(1)})}V(x)\left(u_{\ep}^{(1)}+u_{\ep}^{(2)}\right)\xi_{\ep}\nu_{i}=O(\ep^{3+m(1-\tau)})
\]
and
\[
\int_{\pa B_{d}(y_{\ep}^{(1)})}A_{\ep}\xi_{\ep}\nu_{i}=O(\ep^{(3+m(1-\tau))p}).
\]
Hence we conclude that
\begin{equation}
\text{the RHS of }(\ref{eq: 12})=O(\ep^{3+m(1-\tau)}).\label{eq: 12.right}
\end{equation}

Next we estimate the left hand side of (\ref{eq: 12}). We have
\begin{equation}
\begin{aligned} & \int_{B_{d}(y_{\ep}^{(1)})}\frac{\pa V}{\pa x_{i}}\left(u_{\ep}^{(1)}+u_{\ep}^{(2)}\right)\xi_{\ep}\\
 & =mc_{i}\int_{B_{d}(y_{\ep}^{(1)})}|x_{i}|^{m-2}x_{i}\left(u_{\ep}^{(1)}+u_{\ep}^{(2)}\right)\xi_{\ep}+O\left(\int_{B_{d}(y_{\ep}^{(1)})}|x_{i}|^{m}\left(u_{\ep}^{(1)}+u_{\ep}^{(2)}\right)\xi_{\ep}\right).
\end{aligned}
\label{eq: 5.5.0}
\end{equation}
Observe that
\[
\begin{aligned} & mc_{i}\int_{B_{d}(y_{\ep}^{(1)})}|x_{i}|^{m-2}x_{i}\left(u_{\ep}^{(1)}+u_{\ep}^{(2)}\right)\xi_{\ep}\\
 & =mc_{i}\ep^{3}\int_{B_{\frac{d}{\ep}}(0)}|\ep y_{i}+y_{\ep,i}^{(1)}|^{m-2}\left(\ep y_{i}+y_{\ep,i}^{(1)}\right)\left(U(y)+U\left(y+\frac{y_{\ep}^{(1)}-y_{\ep}^{(2)}}{\ep}\right)\right)\bar{\xi}_{_{\ep}}\\
 & \quad+mc_{i}\int_{B_{d}(y_{\ep}^{(1)})}|x_{i}|^{m-2}x_{i}\left(\var_{\ep}^{(1)}+\var_{\ep}^{(2)}\right)\xi_{\ep}.
\end{aligned}
\]
Since $U$ decays exponentially at infinity and $y_{\ep}^{(i)}=o(\ep)$,
using Proposition \ref{prop: convergence of xi_epsilon} we deduce
\begin{equation}
\begin{aligned} & mc_{i}\ep^{3}\int_{B_{\frac{d}{\ep}}(0)}|\ep y_{i}+y_{\ep,i}^{(1)}|^{m-2}\left(\ep y_{i}+y_{\ep,i}^{(1)}\right)\left(U(y)+U\left(y+\frac{y_{\ep}^{(1)}-y_{\ep}^{(2)}}{\ep}\right)\right)\bar{\xi}_{_{\ep}}\\
 & =2mc_{i}\ep^{m+2}\sum_{j=1}^{3}d_{j}\int_{\R^{3}}|y_{i}|^{m-2}y_{i}U(y)\pa_{x_{j}}U+o(\ep^{m+2})\\
 & =D_{i}d_{i}\ep^{m+2}+o(\ep^{m+2}),
\end{aligned}
\label{eq: 5.5.1}
\end{equation}
where
\begin{equation}
D_{i}=2mc_{i}\int_{\R^{3}}|y|^{m-2}y_{i}U(y)\pa_{x_{i}}U\neq0.\label{eq: D-i}
\end{equation}
In the last equality of (\ref{eq: 5.5.1}), we used the fact that
$U$ is a radially symmetric function. On the other hand, by H\"older's inequality,
(\ref{eq: improved estimate for the correction term}) and Proposition
\ref{prop: asym of normalized solutions}, we have
\begin{equation}
\begin{aligned}mc_{i}\int_{B_{d}(y_{\ep}^{(1)})}|x_{i}|^{m-2}x_{i}\left(\var_{\ep}^{(1)}+\var_{\ep}^{(2)}\right)\xi_{\ep} & =\sum_{i=1}^{2}O\left(\int_{\R^{3}}|\var_{\ep}^{(i)}||\xi_{\ep}|\right)\\
 & =\sum_{i=1}^{2}O(\|\var_{\ep}^{(i)}\|_{\ep}\|\xi_{\ep}\|_{\ep})\\
 & =O(\ep^{3+m(1-\tau)}).
\end{aligned}
\label{eq: 5.5.2}
\end{equation}
Therefore, from (\ref{eq: 5.5.1}) and (\ref{eq: 5.5.2}), we deduce
\begin{equation}
mc_{i}\int_{B_{d}(y_{\ep}^{(1)})}|x_{i}|^{m-2}x_{i}\left(u_{\ep}^{(1)}+u_{\ep}^{(2)}\right)\xi_{\ep}\D x=D_{i}d_{i}\ep^{m+2}+o(\ep^{m+2}),\label{eq: 5.5.3}
\end{equation}
 with $D_{i}\neq0$ given by (\ref{eq: D-i}). Similar arguments gives
\begin{equation}
O\left(\int_{B_{d}(y_{\ep}^{(1)})}|x_{i}|^{m}\left(u_{\ep}^{(1)}+u_{\ep}^{(2)}\right)\xi_{\ep}\D x\right)=O(\ep^{m+3}).\label{eq: 5.5.4}
\end{equation}
Hence, combining (\ref{eq: 5.5.3}) and (\ref{eq: 5.5.4}), we obtain
\begin{equation}
\text{the RHS of }(\ref{eq: 12})=D_{i}d_{i}\ep^{m+2}+o(\ep^{m+2}).\label{eq: 12.left}
\end{equation}

At last, this lemma follows from (\ref{eq: 12.right}) and (\ref{eq: 12.left}).
The proof is complete.
\end{proof}
Now we can prove Theorem \ref{thm: uniqueness}.

\begin{proof}[Proof of Theorem \ref{thm: uniqueness}] If there exist
two distinct solutions $u_{\ep}^{(i)}$, $i=1,2$, then by setting
$\xi_{\ep}$ and $\bar{\xi}_{\ep}$ as above, we find that
\[
\|\bar{\xi}_{\ep}\|_{L^{\wq}(\R^{3})}=1
\]
 by assumption, and that
\begin{eqnarray*}
\|\bar{\xi}_{\ep}\|_{L^{\wq}(\R^{3})}=o(1) &  & \text{as }\ep\to0
\end{eqnarray*}
by (\ref{eq: vanishing at infinity}) and (\ref{eq: local uniform convergence}).
We reach a contradiction. The proof is complete.\end{proof}

We close this paper by remarking that the proof of Theorem \ref{thm: uniqueness}
implies the following slightly more general uniqueness result, which
allows $c_{i}$ has different signs for $i=1,2,3$ in the assumption
(V3).

\begin{theorem}\label{thm: generalized uniqueness result} Let $a,b>0$
and $1<p<5$. Assume that $V$ satisfies (V1) and (V3). If $u_{\ep}^{(i)}=U_{\ep,y_{\ep}^{(i)}}+\var_{\ep,y_{\ep}^{(i)}}^{(i)}$,
$i=1,2$, are two solutions to Eq. (\ref{eq: Kirchhoff}) satisfying
$y_{\ep}\to0$ and
\[
\left\Vert \var_{\ep,y_{\ep}^{(i)}}^{(i)}\right\Vert _{\ep}\le\ep^{\frac{3}{2}}\left(\ep^{\al-\tau}+\left(V(y_{\ep}^{(i)})-V(0)\right)^{1-\tau}\right)
\]
for some $\tau>0$ sufficiently small. Then
\[
u_{\ep}^{(1)}\equiv u_{\ep}^{(2)}.
\]
Moreover, writing $u_{\ep}=U_{\ep,y_{\ep}}+\var_{\ep}$ as the unique
solution, there holds
\[
y_{\ep}=o(\ep),
\]
\[
\left\Vert \var_{\ep}\right\Vert _{\ep}=O\left(\ep^{\frac{3}{2}+m(1-\tau)}\right).
\]
\end{theorem}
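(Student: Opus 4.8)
The plan is to observe that assumption (V2) entered Sections \ref{sec: proof of existence}--\ref{sec: Proof-of-uniqueness-Theorem } only in two places: to invoke the existence statement Theorem \ref{thm: main result-existence}, and to produce the a priori bound (\ref{eq: a simple estimate}) on the correction term. Since Theorem \ref{thm: generalized uniqueness result} takes the existence of the two solutions and the analogue of (\ref{eq: a simple estimate}) as hypotheses, the whole argument of Section \ref{sec: Proof-of-uniqueness-Theorem } should go through verbatim, provided one checks that no step in it uses the sign of the coefficients $c_{i}$ in (V3) --- only $c_{i}\neq0$. So the work is essentially an audit of Section \ref{sec: Proof-of-uniqueness-Theorem } together with a re-derivation of the concentration rate without (V2).

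Concretely, the first step is to re-run the proof of Proposition \ref{prop: rate of concentration} with (\ref{eq: a simple estimate}) replaced by the hypothesised bound $\|\var_{\ep,y_{\ep}^{(i)}}^{(i)}\|_{\ep}\le\ep^{3/2}(\ep^{\al-\tau}+(V(y_{\ep}^{(i)})-V(0))^{1-\tau})$. Applying the local Pohozaev identity (\ref{eq: Pohozaev}) on $\Om=B_{d}(y_{\ep})$ with $d=d_{\ep}\in(1,2)$ chosen as in (\ref{eq: choose the radii}), the boundary terms are $O(\|\var_{\ep}\|_{\ep}^{2})$ by (\ref{eq: inequality on gradient}) and (\ref{eq: size of the solution}), while the bulk term is expanded via (V3) exactly as in (\ref{eq: 2.1})--(\ref{eq: 2.4}); since the hypothesis and (V3) give $\|\var_{\ep}\|_{\ep}=O(\ep^{3/2}(\ep^{m-\tau}+|y_{\ep}|^{m(1-\tau)}))$, one recovers (\ref{eq: the 1st estimate}). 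The elementary inequality (\ref{eq: elementary iinequ.}), the $\ep$-Young inequality, and the strict radial monotonicity of $U$ then yield $|y_{\ep}|=o(\ep)$ --- and none of these steps references the sign of $c_{i}$. Consequently (\ref{eq: improved estimate for the correction term}) holds, i.e. $\|\var_{\ep}\|_{\ep}=O(\ep^{3/2+m(1-\tau)})$ with $m(1-\tau)>1$.

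With this in hand, the contradiction argument of Section \ref{sec: Proof-of-uniqueness-Theorem } applies unchanged. Assuming $u_{\ep}^{(1)}\not\equiv u_{\ep}^{(2)}$, I would form $\xi_{\ep}$ and $\bar{\xi}_{\ep}$ with $\|\bar{\xi}_{\ep}\|_{L^{\wq}(\R^{3})}=1$. Proposition \ref{prop: asym of normalized solutions} gives $\|\xi_{\ep}\|_{\ep}=O(\ep^{3/2})$ (its proof uses only (V1), (\ref{eq: size of the solution}) and the Sobolev inequality), and Proposition \ref{prop: convergence of xi_epsilon} gives $\bar{\xi}_{\ep}\to\sum_{i=1}^{3}d_{i}\pa_{x_{i}}U$ in $C_{\loc}^{1}(\R^{3})$, using the nondegeneracy part of Theorem \ref{prop: LPX-2016} together with the improved estimates and $(y_{\ep}^{(1)}-y_{\ep}^{(2)})/\ep\to0$. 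Lemma \ref{lem: vanishing limit function} then forces $d_{i}=0$: applying (\ref{eq: Pohozaev}) to $u_{\ep}^{(1)}$ and $u_{\ep}^{(2)}$ over $B_{d}(y_{\ep}^{(1)})$ and subtracting, the right-hand side is $O(\ep^{3+m(1-\tau)})$ while the left-hand side equals $\sum_{i}D_{i}d_{i}\ep^{m+2}+o(\ep^{m+2})$ with $D_{i}=2mc_{i}\int_{\R^{3}}|y|^{m-2}y_{i}U\pa_{x_{i}}U$; since $\pa_{x_{i}}U=U^{\prime}(|y|)y_{i}/|y|$ with $U^{\prime}<0$, one has $D_{i}\neq0$ precisely when $c_{i}\neq0$, irrespective of its sign, so $d_{i}=0$. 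Hence $\bar{\xi}_{\ep}\to0$ in $C_{\loc}^{1}(\R^{3})$; combined with the uniform decay $\bar{\xi}_{\ep}(x)\to0$ as $|x|\to\wq$ inherited from (\ref{eq: decay estimates}), this gives $\|\bar{\xi}_{\ep}\|_{L^{\wq}(\R^{3})}\to0$, contradicting $\|\bar{\xi}_{\ep}\|_{L^{\wq}(\R^{3})}=1$. Thus $u_{\ep}^{(1)}\equiv u_{\ep}^{(2)}$, and the claimed asymptotics for $y_{\ep}$ and $\var_{\ep}$ are those just established.

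The main point requiring care --- rather than a genuine analytic obstacle --- is this sign audit: one must verify that the comparison argument used in the proof of Theorem \ref{thm: main result-existence} (which did use $B>0$ and $V(y_{\ep})-V(0)\ge c_{0}>0$ on $\pa B_{\de}(0)$) never reappears, and that the only sign-sensitive quantities in Section \ref{sec: Proof-of-uniqueness-Theorem } --- the coefficients in (\ref{eq: 2.3})--(\ref{eq: 2.4}), the absorption in (\ref{eq: elementary iinequ.}), and the constant $D_{i}$ in Lemma \ref{lem: vanishing limit function} --- need only $c_{i}\neq0$. Once this bookkeeping is complete, Theorem \ref{thm: generalized uniqueness result} follows with no new input.
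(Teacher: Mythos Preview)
Your proposal is correct and matches the paper's own treatment: the paper states Theorem \ref{thm: generalized uniqueness result} precisely as a remark that the proof of Theorem \ref{thm: uniqueness} goes through once the existence and the bound (\ref{eq: a simple estimate}) are taken as hypotheses, and you have carried out exactly the audit this remark calls for. One small slip: in the final Pohozaev step the identity is applied for each fixed $i$ separately, yielding $D_{i}d_{i}\ep^{m+2}+o(\ep^{m+2})$ for that $i$ (not a sum over $i$), but your conclusion $d_{i}=0$ from $D_{i}\neq0$ is unaffected.
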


\emph{Acknowledgment}. Part of this work was finished when Xiang was studying in the University of Jyvaskyla, Finland. He would like to thank the financial support of  the Academy of Finland.  % The author would like to thank the anonymous referees for  valuable comments which  improved the manuscript a lot.

\appendix

\section{The unperturbed problem}

Let $U$ be the unique positive radial solution of Eq. (\ref{eq: limiting equ.})
(see Proposition \ref{prop: LPX-2016}). Then, for any $\ep>0$ and
$y\in\R^{3}$, the function
\begin{eqnarray*}
U_{\ep,y}(x)\equiv U\left(\frac{x-y}{\ep}\right), &  & x\in\R^{3}
\end{eqnarray*}
are the unique positive solutions to equation
\begin{eqnarray*}
-\left(\ep^{2}a+\ep b\int_{\R^{3}}|\na u|^{2}\D x\right)\De u+u=u^{p} &  & \text{in }\R^{3}.
\end{eqnarray*}
We also recall that in the assumption (V2) we assumed that $x_{0}=0$,
$r_{0}=10$ and $V(0)=1$.

\begin{proposition}\label{Prop: asymptotics of perturbation of U}
Assume that $V$ satisfies (V1) and (V2). Let $y\in B_{1}(0)$. Then
, for $\ep>0$ sufficiently small, we have
\[
I_{\ep}(U_{\ep,y})=A\ep^{3}+B\ep^{3}\left(V(y)-V(0)\right)+O(\ep^{3+\al}),
\]
 where
\[
A=\frac{1}{2}\int_{\R^{3}}\left(a|\na U|^{2}+U^{2}\right)+\frac{b}{4}\left(\int_{\R^{3}}|\na U|^{2}\right)^{2}-\frac{1}{p+1}\int_{\R^{3}}U^{p+1}
\]
and
\[
B=\frac{1}{2}\int_{\R^{3}}U^{2}.
\]
\end{proposition}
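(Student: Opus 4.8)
The plan is to evaluate $I_{\ep}(U_{\ep,y})$ by the change of variables $x=\ep z+y$, under which $U_{\ep,y}(x)=U(z)$, $\na_{x}U_{\ep,y}(x)=\ep^{-1}\na U(z)$ and $\D x=\ep^{3}\D z$; every term then either scales exactly or is controlled by the H\"older continuity of $V$ near $0$.

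First I would dispatch the three terms that scale cleanly. From $\int_{\R^{3}}|\na U_{\ep,y}|^{2}\D x=\ep\int_{\R^{3}}|\na U|^{2}$ we get $\ep^{2}a\int_{\R^{3}}|\na U_{\ep,y}|^{2}\D x=a\ep^{3}\int_{\R^{3}}|\na U|^{2}$ and $\frac{\ep b}{4}\bigl(\int_{\R^{3}}|\na U_{\ep,y}|^{2}\bigr)^{2}=\frac{b}{4}\ep^{3}\bigl(\int_{\R^{3}}|\na U|^{2}\bigr)^{2}$. Since $U>0$ we have $(U_{\ep,y})_{+}=U_{\ep,y}$, so $\frac{1}{p+1}\int_{\R^{3}}(U_{\ep,y})_{+}^{p+1}\D x=\frac{\ep^{3}}{p+1}\int_{\R^{3}}U^{p+1}$. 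None of these produce an error term.

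The one genuine estimate is for the potential term. By scaling,
\[
\int_{\R^{3}}V(x)U_{\ep,y}^{2}\D x=\ep^{3}V(y)\int_{\R^{3}}U^{2}+\ep^{3}\int_{\R^{3}}\bigl(V(\ep z+y)-V(y)\bigr)U(z)^{2}\D z,
\]
and I would bound the last integral by splitting $\R^{3}$ into $\{|\ep z|<9\}$ and its complement. On $\{|\ep z|<9\}$, the hypothesis $y\in B_{1}(0)$ forces $\ep z+y\in B_{10}(0)$, where $V\in C^{\al}$, so $|V(\ep z+y)-V(y)|\le C\ep^{\al}|z|^{\al}$; since the exponential decay (\ref{eq: Properties of U}) gives $\int_{\R^{3}}|z|^{\al}U^{2}<\wq$, this contributes $O(\ep^{\al})$. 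On $\{|\ep z|\ge 9\}$ the boundedness of $V$ from (V1) and (\ref{eq: Properties of U}) give a contribution $O(e^{-\si/\ep})$ for some $\si>0$. Hence $\int_{\R^{3}}V(x)U_{\ep,y}^{2}\D x=\ep^{3}V(y)\int_{\R^{3}}U^{2}+O(\ep^{3+\al})$, uniformly for $y\in B_{1}(0)$; writing $V(y)=V(0)+(V(y)-V(0))$ and recalling the normalization $V(0)=1$ splits this into the $A$-part $\ep^{3}\int_{\R^{3}}U^{2}$ and the $B$-part $\ep^{3}(V(y)-V(0))\int_{\R^{3}}U^{2}$.

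Finally I would collect the pieces in $I_{\ep}(U_{\ep,y})=\frac{1}{2}\|U_{\ep,y}\|_{\ep}^{2}+\frac{\ep b}{4}\bigl(\int_{\R^{3}}|\na U_{\ep,y}|^{2}\bigr)^{2}-\frac{1}{p+1}\int_{\R^{3}}(U_{\ep,y})_{+}^{p+1}$: the $\ep^{3}$-terms assemble into exactly $A\ep^{3}$, the factor $\frac12$ in front of $\|\cdot\|_{\ep}^{2}$ turns the $\ep^{3}(V(y)-V(0))\int_{\R^{3}}U^{2}$ contribution into $B\ep^{3}(V(y)-V(0))$ with $B=\frac12\int_{\R^{3}}U^{2}$, and the only error is the $O(\ep^{3+\al})$ from the potential term. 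There is no real obstacle here; the only point demanding care is keeping the domain splitting in the H\"older estimate uniform in $y\in B_{1}(0)$ so that the remainder is genuinely $O(\ep^{3+\al})$.
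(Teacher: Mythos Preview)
Your proof is correct and follows essentially the same approach as the paper: both compute the gradient, Kirchhoff, and power terms by pure scaling, and both handle the potential term by writing $V(x)=V(y)+(V(x)-V(y))$ and splitting the remainder integral into a near region (where the $C^{\al}$-regularity of $V$ on $\bar B_{10}(0)$ applies) and a far region (controlled by the exponential decay of $U$). The only cosmetic difference is that the paper splits in the $x$-variable over $B_{1}(y)$ while you split in the $z$-variable over $\{|\ep z|<9\}$; both choices guarantee the near region stays inside $B_{10}(0)$ uniformly for $y\in B_{1}(0)$.
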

\begin{proof}
By direct computation, we obtain
\[
\begin{aligned}I_{\ep}(U_{\ep,y}) & =\frac{1}{2}\int_{\R^{3}}\left(\ep^{2}a|\na U_{\ep,y}|^{2}+V(x)U_{\ep,y}^{2}\right)+\frac{\ep b}{4}\left(\int_{\R^{3}}|\na U_{\ep,y}|^{2}\right)^{2}-\frac{1}{p+1}\int_{\R^{3}}U_{\ep,y}^{p+1}\\
 & =A\ep^{3}+\frac{1}{2}\int_{\R^{3}}(V(x)-V(0))U_{\ep,y}^{2}\\
 & =A\ep^{3}+B\ep^{3}\left(V(y)-V(0)\right)+\frac{1}{2}\int_{\R^{3}}\left(V(x)-V(y)\right)U_{\ep,y}^{2},
\end{aligned}
\]
where $A,B$ are given as in the result.

Split the last term into two terms:
\[
\int_{\R^{3}}\left(V(x)-V(y)\right)U_{\ep,y}^{2}=\int_{B_{1}(y)}\left(V(x)-V(y)\right)U_{\ep,y}^{2}+\int_{\R^{3}\backslash B_{1}(y)}\left(V(x)-V(y)\right)U_{\ep,y}^{2}.
\]
Using the assumption (V2) and $y\in B_{1}(0)$, we deduce
\[
\int_{B_{1}(y)}|V(x)-V(y)|U_{\ep,y}^{2}=O(\ep^{3+\al}).
\]
Using the boundedness of $V$ and the exponential decay of $U$, we
deduce
\[
\int_{\R^{3}\backslash B_{1}(y)}|V(x)-V(y)|U_{\ep,y}^{2}=O(\ep^{3+\al}).
\]
 Combining above estimates gives the desired estimates. The proof
is complete.
\end{proof}

\section{Proof of Lemma \ref{lem: error estimates}}

This section is devoted to the proof of Lemma \ref{lem: error estimates}.
Recall that $R_{\ep}$ is defined as in (\ref{eq: 2rd order reminder term}),
which gives
\begin{equation}
R_{\ep}(\var)=A_{1}(\var)-A_{2}(\var),\label{eq: error term}
\end{equation}
where
\[
A_{1}(\var)=\frac{b\epsilon}{4}\left(\left(\int_{\R^{3}}|\na\var|^{2}\right)^{2}+4\int_{\R^{3}}|\na\var|^{2}\int_{\R^{3}}\na U_{\ep,y}\cdot\na\var\right)
\]
and
\[
A_{2}(\var)=\frac{1}{p+1}\int_{\R^{3}}\left(\left(U_{\ep,y}+\var\right)_{+}^{p+1}-U_{\ep,y}^{p+1}-(p+1)U_{\ep,y}^{p}\var-\frac{p(p+1)}{2}U_{\ep,y}^{p-1}\var^{2}\right).
\]
Use $R_{\ep}^{(i)}$ to denote the $i$th derivative of $R_{\ep}$,
and also use similar notations for $A_{1}$ and $A_{2}$. By direct
computations, we deduce that, for any $\var,\psi\in H_{\ep}$,
\[
\langle R_{\ep}^{(1)}(\var),\psi\rangle=\langle A_{1}^{(1)}(\var),\psi\rangle-\langle A_{2}^{(1)}(\var),\psi\rangle
\]
where
\[
\begin{aligned}\langle A_{1}^{(1)}(\var),\psi\rangle & =b\ep\left(\int_{\R^{3}}|\na\var|^{2}\int_{\R^{3}}\na\var\cdot\na\psi+\int_{\R^{3}}|\na\var|^{2}\int_{\R^{3}}\na U_{\ep,y}\cdot\na\psi\right)\\
 & \quad+2b\ep\int_{\R^{3}}\na U_{\ep,y}\cdot\na\var\int_{\R^{3}}\na\var\cdot\na\psi
\end{aligned}
\]
and

\[
\langle A_{2}^{(1)}(\var),\psi\rangle=\int_{\R^{3}}\left((U_{\ep,y}+\var)_{+}^{p}\psi-U_{\ep,y}^{p}\psi-pU_{\ep,y}^{p-1}\var\psi\right).
\]
We also deduce, for any $\var,\psi,\xi\in H_{\ep}$, that
\[
\langle R_{\ep}^{(2)}(\var)[\psi],\xi\rangle=\langle A_{1}^{(2)}(\var)[\psi],\xi\rangle-\langle A_{2}^{(2)}(\var)[\psi],\xi\rangle,
\]
where
\[
\begin{aligned}\langle A_{1}^{(2)}(\var)[\psi],\xi\rangle & =b\ep\left(2\int_{\R^{3}}\na\var\cdot\na\psi\int_{\R^{3}}\na\var\cdot\na\xi+\int_{\R^{3}}|\na\var|^{2}\int_{\R^{3}}\na\xi\cdot\na\psi\right)\\
 & \quad+2b\ep\left(\int_{\R^{3}}\na\var\cdot\na\psi\int_{\R^{3}}\na U_{\ep,y}\cdot\na\xi+\int_{\R^{3}}\na U_{\ep,y}\cdot\na\psi\int_{\R^{3}}\na\var\cdot\na\xi\right)\\
 & \quad+2b\ep\int_{\R^{3}}\na U_{\ep,y}\cdot\na\var\int_{\R^{3}}\na\xi\cdot\na\psi
\end{aligned}
\]
and
\[
\langle A_{2}^{(2)}(\var)[\psi],\xi\rangle=\int_{\R^{3}}\left(p\left(U_{\ep,y}+\var\right)_{+}^{p-1}\psi\xi-pU_{\ep,y}^{p-1}\psi\xi\right).
\]
Now we prove Lemma \ref{lem: error estimates}.

\begin{proof}[Proof of Lemma \ref{lem: error estimates}] First, we
estimate $A_{1}(\var)$, $A_{1}^{\prime}(\var)$ and $A_{1}^{\prime\prime}(\var)$.
Notice that
\[
\|\na U_{\ep,y}\|_{L^{2}(\R^{3})}=C_{0}\ep^{1/2}
\]
with $C_{0}=\|\na U\|_{L^{2}(\R^{3})}$, and that
\begin{eqnarray*}
\|\na\var\|_{L^{2}(\R^{3})}\le C_{1}\ep^{-1}\|\var\|_{\ep}, &  & \var\in H_{\ep}
\end{eqnarray*}
holds for some $C_{1}>0$ independent of $\ep$. Combining above two
estimates together with H\"older's inequality yields
\[
\int_{\R^{3}}|\na\var\cdot\na\psi|\int_{\R^{3}}|\na U_{\ep,y}\cdot\na\xi|\le C\ep^{-5/2}
\]
and that
\[
\int_{\R^{3}}|\na\var\cdot\na\psi|\int_{\R^{3}}|\na\eta\cdot\na\xi|\le C\ep^{-4}
\]
for all $\var,\psi,\eta,\xi\in H_{\ep}$. These estimates imply that
\[
|A_{1}^{(i)}(\var)|\le Cb\ep^{-\frac{3}{2}}\left(1+\ep^{-\frac{3}{2}}\|\var\|_{\ep}\right)\|\var\|_{\ep}^{3-i}
\]
 for some constant $C>0$ independent of $\ep$.

Next we estimate $A_{2}^{(i)}(\var)$ (the $i$th derivative of $A_{2}(\var)$)
for $i=0,1,2$. We consider the case $1<p\le2$ first.

To estimate $A_{2}(\var)$, we apply the following elementary inequality:
for any $e,f\in\R$, there exists $C_{1}(p)>0$ depending only on
$p$, so that
\[
\left|(e+f)_{+}^{p+1}-e_{+}^{p+1}-(p+1)e_{+}^{p}f-\frac{p(p+1)}{2}e_{+}^{p-1}f^{2}\right|\le C_{1}(p)|f|^{p+1}.
\]
Then there holds
\[
|A_{2}(\var)|\le C\int_{\R^{3}}|\var|^{p+1}\le C\ep^{-\frac{3(p-1)}{2}}\|\var\|_{\ep}^{p+1},
\]
where we have used (\ref{eq: epsilon-Sobolev inequality}) to derive
the second term.

To estimate $A_{2}^{(1)}(\var)$, we apply the following elementary
inequality: for any $e,f\in\R$, there exists $C_{2}(p)>0$ depending
only on $p$, so that
\[
\left|(e+f)_{+}^{p}-e_{+}^{p}-pe_{+}^{p-1}f\right|\le C_{2}(p)|f|^{p}.
\]
Then there holds
\[
|\langle A_{2}^{(1)}(\var),\psi\rangle|\le C_{p}\int_{\R^{3}}|\var|^{p}|\psi|\le C\ep^{-\frac{3(p-1)}{2}}\|\var\|_{\ep}^{p}\|\psi\|_{\ep},
\]
where we have used (\ref{eq: epsilon-Sobolev inequality}) to derive
the second term. This gives
\[
\|A_{2}^{(1)}(\var)\|\le C\ep^{-\frac{3(p-1)}{2}}\|\var\|_{\ep}^{p}.
\]

To estimate $A_{2}^{(2)}(\var)$, we apply the following elementary
inequality: for any $e,f\in\R$, there exists $C_{3}(p)>0$ depending
only on $p$, so that
\[
\left|(e+f)_{+}^{p-1}-e_{+}^{p-1}\right|\le C_{3}(p)|f|^{p-1}.
\]
Then there holds
\[
|\langle A_{2}^{\prime\prime}(\var)[\psi],\xi\rangle|\le C_{3}(p)\int_{\R^{3}}|\var|^{p-1}|\psi||\xi|\le C\ep^{-\frac{3(p-1)}{2}}\|\var\|_{\ep}^{p-1}\|\psi\|_{\ep}\|\xi\|_{\ep}.
\]
where we have used H\"older's inequality and (\ref{eq: epsilon-Sobolev inequality})
to derive the second term. This gives
\[
\|A_{2}^{(2)}(\var)\|\le C\ep^{-\frac{3(p-1)}{2}}\|\var\|_{\ep}^{p-1}.
\]

Combining the above estimates yields the result in Lemma \ref{lem: error estimates}
in the case $1<p\le2$.

In the case $2<p<5$, we can estimate $A_{2}^{(i)}(\var)$ similarly
as above. So we only point out the following elementary inequalities
that are needed. For any $e,f\in\R$, there exist $\tilde{C}_{i}(p)>0$
($i=1,2,3$) such that
\[
\left|(e+f)_{+}^{p+1}-e_{+}^{p+1}-(p+1)e_{+}^{p}f-\frac{p(p+1)}{2}e_{+}^{p-1}f^{2}\right|\le\tilde{C}_{1}(p)(|e|^{p-2}+|f|^{p-2})|f|^{3},
\]

\[
\left|(e+f)_{+}^{p}-e_{+}^{p}-pe_{+}^{p-1}f\right|\le\tilde{C}_{2}(p)(|e|^{p-2}+|f|^{p-2})|f|^{2}
\]
and
\[
\left|(e+f)_{+}^{p-1}-e_{+}^{p-1}\right|\le\tilde{C}_{3}(p)(|e|^{p-2}+|f|^{p-2})|f|.
\]
The proof of Lemma \ref{lem: error estimates} is complete.\end{proof}

\end{document}